\theoremstyle{plain}
\newtheorem{theorem}{Theorem}[section]
\newtheorem{lemma}{Lemma}[section]
\theoremstyle{definition}
\newtheorem{remark}{Remark}[section]
\pgfplotsset{compat=1.13}
\definecolor{PrimalOrange}{RGB}{255,153,85}
\definecolor{DualBlue}{RGB}{0,255,255}
\definecolor{SlabGreen}{RGB}{170,212,0}
\definecolor{HSUred}{RGB}{197,0,66}
\definecolor{mountainmeadow}{rgb}{0.19, 0.73, 0.56}
\definecolor{navyblue}{rgb}{0.0, 0.0, 0.5}
\newcommand{\concentration}{{u}}
\newcommand{\convection}{{\boldsymbol v}}
\newcommand{\pressure}{{p}}
\newcommand{\dualz}{{z}}
\newcommand{\density}{{\rho}}
\newcommand{\viscosity}{{\nu}}
\newcommand{\transportforce}{{g}}
\newcommand{\stokesforce}{{\boldsymbol f}}
\numberwithin{equation}{section}
\journal{Computers \& Mathematics with Applications}
\begin{document}

\begin{frontmatter}


\title{Flexible goal-oriented adaptivity for higher-order 
space-time discretizations of transport problems with coupled flow}

\author[hsuhh1]{Markus Bause}
\author[hsuhh1]{Marius Paul Bruchh\"auser$^{*,}$}
\author[hsuhh1]{Uwe K\"ocher}

\address[hsuhh1]{Helmut Schmidt University,
Faculty of Mechanical Engineering,
Holstenhofweg 85, 22043 Hamburg\\
\textnormal{\texttt{bause@hsu-hh.de}},
\textnormal{\texttt{bruchhaeuser@hsu-hh.de}}
($^*\!\!$ corresponding author),\\
\textnormal{\texttt{koecher@hsu-hh.de}}.}

\begin{abstract}
In this work, a flexible higher-order space-time adaptive finite element approximation
of convection-dominated transport with coupled fluid flow is developed
and studied.
Convection-dominated transport is a challenging subproblem in poromechanics
in which coupled transport with flow, chemical reaction and
mechanical response in porous media is considered.
Key ingredients are
the arbitrary degree discontinuous Galerkin time discretization
of the primal and dual problems for the Dual Weighted Residual (DWR) approach,
an a posteriori error estimation for the transport problem coupled with flow 
and its implementation in an advanced software architecture.
The error estimate allows the separation of the temporal and spatial discretization
error contributions which facilitates the simultaneous adjustment of the time
and space mesh.
The performance of the approach and its software implementation is studied
by numerical convergence examples as well as an example of physical interest for
convection-dominated cases.
\end{abstract}

\begin{keyword}
Space-time adaptivity \sep
goal-oriented a posteriori error control \sep
Dual Weighted Residual method \sep
coupled systems \sep
poromechanics
\MSC[2019] 11--30 \sep  01--25
\end{keyword}

\end{frontmatter}



\section{Introduction}
\label{sec:1:introduction}

Coupling convection-dominated transport with flow, chemical reaction and
mechanical response in porous media with or without fracture development
has attracted researcher's interest for many years and
receives increasing interest currently; cf.
\cite{BruchhaeuserLM07,BruchhaeuserB41,BruchhaeuserA89,BruchhaeuserS00,
BruchhaeuserAKDSW16,BruchhaeuserGM17,BruchhaeuserW16,BruchhaeuserOKL19}.
The strong multi-physics character of the models of coupled transport, flow,
deformation and fracture propagation yields, for instance, different time scales
such that iterative coupling methods and multi-rate time discretizations
are broadly used and studied; cf.
\cite{BruchhaeuserAKDSW16,BruchhaeuserGM18,BruchhaeuserGWH16,BruchhaeuserMW13}.
The governing system of a single-phase convection-diffusion transport reads as
\begin{equation}
\label{eq:1:transport}
\partial_t( \phi\, \density(\pressure)\, \concentration )
+ \nabla \cdot ( \density(\pressure)\, \concentration\, \convection
- \phi\, \density(\pressure)\, \boldsymbol D(\convection)\, \nabla \concentration)
= \transportforce\,,
\end{equation}
with the unknown concentration variable $\concentration$ which is transported
by a convec\-tion-diffusion process with the convection tensor $\convection$ and
the diffusion tensor $\boldsymbol D$. The other variables in Eq.~\eqref{eq:1:transport}
are the porosity $\phi \in (0,1]$, the mass density $\density > 0$,
the fluid pressure $\pressure$ and the source term $\transportforce$ of the
transport equation. Here, we consider a linearized tensor of the type
$\boldsymbol D = \varepsilon\, \boldsymbol I$ with a scalar diffusion parameter
$\varepsilon$.
Dispersion effects are neglectable for convection-dominated transport.
The Navier-Stokes equations of a flow model read as
\begin{displaymath}
\partial_t \convection
- 2\, \widetilde{\viscosity} (\nabla \cdot \boldsymbol \epsilon(\convection) )
+ (\convection \cdot \nabla) \convection + \nabla \pressure = \boldsymbol f\,,
\quad
\nabla \cdot \convection = 0\,,
\end{displaymath}
for the fluid convection $\convection$ and pressure $\pressure$ variables
and the viscosity $\widetilde{\viscosity}$.
This flow model is simplified here to the quasi-static Stokes equations
\begin{equation}
\label{eq:2:stokes}
- \viscosity\, \Delta\, \convection + \nabla \pressure = \stokesforce
\end{equation}
of an incompressible fluid $\nabla \cdot \convection = 0$
due to the assumption of a slow moving viscous fluid. The latter assumption
comes from our focus on a highly time-dynamic transport in a porous media
with very small diffusion relative to the convection.
Thus, our linear transport problem yields a singulary-disturbed linear transport
problem with the typical issue of numerical oscillations.
In poromechanics the quasi-static or dynamic mechanical response model
is coupled very often in an iterative way to transport and flow models;
cf. e.g. \cite{BruchhaeuserMW13} and references therein. For completeness,
we give here the extension of the governing transport model from
Eqs. \eqref{eq:1:transport}-\eqref{eq:2:stokes}
with the quasi-static Biot's equations as
\begin{displaymath}
-\nabla \cdot \boldsymbol \sigma(\boldsymbol u_m)
+ \alpha\, \nabla \pressure = \boldsymbol f_m
\end{displaymath}
for the mechanical deformation $\boldsymbol u_m$ due to fluid and capillary
pressure and time-dependent mechanical forcing changes.
The mechanical stress is denoted by $\boldsymbol \sigma$ and $\alpha$ is the
Biot-Willis model-coupling parameter.
This mechanical equilibrium equation is supported by the additional
Darcy transport system
\begin{displaymath}
\partial_t (c_0\, \pressure + \alpha (\nabla \cdot \boldsymbol u_m))
+ \nabla \cdot \boldsymbol v_D = q\,,\quad
\boldsymbol v_D = \boldsymbol \kappa\,
(-\nabla \pressure + \density_f\, \boldsymbol g)\,,
\end{displaymath}
with a storage coefficient $c_0$, the Darcy velocity $\boldsymbol v_D$,
a source term $q$, a viscosity-permeability tensor $\boldsymbol \kappa$,
the fluid density $\density_f$ and the gravity $\boldsymbol g$.
The latter Darcy transport equation is of the same structure as Eq.
\eqref{eq:1:transport} with $\epsilon_v := \nabla \cdot \boldsymbol u_m$,
$\boldsymbol u_m$ and $\boldsymbol v_D$ depending on the pressure $\pressure$,
and the first equation of the Stokes system \eqref{eq:2:stokes} is similar
to the quasi-static equilibrium equation of the Biot's system.

The numerical approximation of convection-dominated transport problems
\eqref{eq:1:transport} and incompressible flow \eqref{eq:2:stokes}
remains a challenging task, cf. \cite{BruchhaeuserJKN18} and references therein.
In \eqref{eq:1:transport}, convection-dominated transport is comprised
by assuming that $0< \varepsilon \ll |\convection|$.
The solution of these transport problems are typically characterized by the
occurrence of sharp moving fronts and layers. The key challenge for the
numerical approximation exists in the accurate and efficient solution while
avoiding non-physical oscillations or smearing effects.
The application of stabilization techniques is a typical approach to overcome
non-physical effects.
As shown in a comparative study for time-dependent convection-diffusion-reaction
equations in \cite{BruchhaeuserJS08}, stabilization techniques on globally refined
meshes fail to avoid these oscillations even after tuning stabilization parameters.
For a general review of stabilization techniques we refer to
\cite{BruchhaeuserRST08,BruchhaeuserJKN18}.

Furthermore, the non-availability of parameter-robust a posteriori error estimates
for quantities of physical interest and in general situations is complained in
\cite{BruchhaeuserJKN18}.
Moreover, the authors point out that adaptive mesh refinement strategies that are
based on such a posteriori error estimates are desirable and indispensable for
further improvement.

One possible technique for those adaptive strategies is goal-oriented a posteriori
error control. For a general review of a posteriori error estimation we refer to
\cite{BruchhaeuserAO00,BruchhaeuserV96}.
In particular, the Dual Weighted Residual (DWR) method,
introduced by Becker and Rannacher \cite{BruchhaeuserBR96,BruchhaeuserBR98,BruchhaeuserBR01},
allows for goal-oriented error control and adaptive mesh refinement
by weighting the influence of local residuals on the error within a goal
quantity of physical relevance.
The DWR approach relies on a space-time variational formulation of the discrete
problem and uses duality techniques to find a rigorous a posteriori error estimate
through the approximation of an addition dual problem.
Since the pioneering work in duality based error estimation, numerous studies
have been done for the application of the DWR method to several classes of
problems of physics including coupled phenomena and problems of optimal control.
Generalized versions of the DWR method in that not only the discretization error
but also the iteration or a modelling error is addressed, have been developed
and studied; cf. \cite{BruchhaeuserRV13,BruchhaeuserBE03}.
Further generalizations consider multi-objective goal functionals and the
treatment of higher-order corrections of the error estimator that are often
neglected; cf. \cite{BruchhaeuserEW17,BruchhaeuserELW19}.
In \cite{BruchhaeuserEW17}, error localization of the DWR method is performed in
a variational form using a partition-of-unity approach, instead of evaluating
strong operators and face integrals, as it is done in the
classical way of error localization \cite{BruchhaeuserBR03}. Thereby, node-wise
error contributions are obtained and computational costs are reduced.

DWR based error control has been well understood for single-physics problems.
Rigorous studies of DWR techniques for nonstationary multi-physics systems and
their numerical validations are still rare in the literature.
One reason for this might be that the separation of contributions to the
discretization error and the control of the temporal and spatial mesh becomes
more involved since, in addition, the impact of each of the subproblems and
its approximation on the goal quantity has to be understood and balanced within
the localized error representation and mesh refinement process.
In particular, goal-oriented error control was strongly analyzed for the
computation of nonstationary incompressible flow modeled by the Navier--Stokes
equations; cf. \cite{BruchhaeuserBR12,BruchhaeuserMR15} and references therein.
Moreover, the economical simulation of fluid-structure interaction as a
prominent system of multi-physics has attracted the usage of a posteriori error
control mechanisms based on duality techniques; cf.
\cite{BruchhaeuserR17,BruchhaeuserFW18,BruchhaeuserR12}.
A goal-oriented spatial-only adaptivity for a nonstationary transport problem 
coupled with a stationary Darcy flow, which is related to this work,
is studied in \cite{BruchhaeuserLM07}.

Finally, we note that the efficiency of goal-oriented space-time
adaptive methods demands on their efficient software implementation.
This requires the appropriate selection and implementation, respectively,
of data structures and efficient algorithms acting on them.
Recently, a programming model for the DWR approach applied to the nonstationary
diffusion equation with fixed lowest-order time discretizations for the primal
and dual problem was published by the authors in \cite{BruchhaeuserKBB17}.

In this work we combine the DWR approach with streamline upwind Petrov-Galerkin
(SUPG) stabilized approximations of convection-dominated transport problems as
introduced by Eq. \eqref{eq:1:transport}.
The transport problem is coupled via a convection tensor obtained by an auxiliary
flow problem as given by Eq. \eqref{eq:2:stokes} that has to be solved additionally.
%
%
Precisely, this work is characterised by the following features.
\begin{itemize}
\item An arbitrary order discontinuous Galerkin (dG) time discretization is
rigorously applied to the primal and dual problem.
\item The automatic adaptation of the space and time mesh is simplified by
separating the errors of temporal and spatial discretization similar to the
approach given in \cite{BruchhaeuserBR12,BruchhaeuserSV08}.
\item The dual residual is computed explicitly for both error representations,
the error in space and time, similarly to, e.g., \cite{BruchhaeuserBR12,BruchhaeuserSV08}.
\item A new software based on tensor-product space-time slabs was
developed. Further, the Stokes solver is an encapsulated module providing the
convection tensor for the convection-diffusion transport problem working on
a different triangulation. This is an extension of the software used in
\cite{BruchhaeuserKBB17}.
\end{itemize}

This work is organized as follows.
In Sec.~\ref{sec:2:modelproblem} we present the space-time discretization of our
model problem, including its stabilization for convection-dominated transport.
In Sec.~\ref{sec:3:errorestimation} the DWR method is applied and localized
a posteriori error representations, separating the effects of temporal and spatial
discretization, are derived.
In Sec.~\ref{sec:4:practical_aspects} the underlying adaptive algorithm is presented,
some practical aspects for the realization of the adaptivity process are illustrated
and details of the software implementation are given.
In Sec.~\ref{sec:5:examples} the numerical performance properties are studied with
convergence tests and a sophisticated experiment of physical relevance is studied.
Finally, in Sec.~\ref{sec:6:summary} we summarize and give some outlook
for future work.

\section{Model problem and stabilized space-time discretization}
\label{sec:2:modelproblem}

In this section we briefly introduce the space-time finite element
discretization of \eqref{eq:1:transport}, \eqref{eq:2:stokes}
including the SUPG stabilization to capture convec\-tion-dominated transport.

\subsection{Model problem}
\label{sec:2:1}

The time dependent convection-diffusion transport problem is given by
\begin{subequations}
\label{eq:transport_problem_0}
\begin{align}
\density \partial_{t} \concentration
- \nabla \cdot (\varepsilon \nabla \concentration)
+ \convection \cdot \nabla \concentration
+ \alpha \concentration & =  \transportforce \phantom{0 u_0} \text{in} \;\,
Q = \Omega\times I\,,\\
\concentration & =  0 \phantom{\transportforce u_0} \text{on}\;\;
\Sigma_D = \partial\Omega \times I\,,\\
%
%
\concentration & =  \concentration_{0}\phantom{\transportforce 0}
\text{on} \;\;\Sigma_0 = \Omega\times \{0\}\,,
\end{align}
\end{subequations}
with the coupled Stokes flow
\begin{subequations}
\label{eq:stokes_problem_0}
\begin{align}
- \viscosity\, \Delta \convection + \nabla \pressure & =
\stokesforce\phantom{0} \quad \text{in}\;\;  \Omega\,,\\
%
\nabla \cdot \convection & =  0\phantom{\stokesforce} \quad   \text{in}\;\; \Omega\,,
\end{align}
\end{subequations}
equipped with appropriate boundary conditions.
The system \eqref{eq:transport_problem_0}, \eqref{eq:stokes_problem_0} is
studied due to its prototype character for a wide range of applications in
practice, as introduced in Sec.~\ref{sec:1:introduction}.
In particular, its use for poroelasticity models and poromechanics in a more
general sense is straightforward.
The goal-oriented adaptivity approach that is developed in this work
is general enough such that it can be adapted to multi-physics systems.

In \eqref{eq:transport_problem_0}, \eqref{eq:stokes_problem_0}, we denote by
$\Omega \subset \mathbb{R}^{d}$, with $d=2,3$, a polygonal or polyhedral bounded
domain with Lipschitz boundary $\partial\Omega$ and $I=(0,T]$, $0 < T < \infty$,
is a finite time interval.
We assume that $\varepsilon > 0$ is a constant diffusion coefficient,
$\alpha \in L^{\infty}(\Omega)$ is the reaction coefficient,
$\density > 0$ is the constant density coefficient and
$\viscosity > 0$ is the constant viscosity coefficient.
Homogeneous Dirichlet boundary conditions in the transport and flow problem
are prescribed for brevity only.
In our numerical examples in Sec.~\ref{sec:5:examples} we also consider more
general boundary conditions.
For the sake of physical realism, the transport problem is supposed to be
convection-dominated by the at least local condition of
$0 < \varepsilon \ll | \convection |$.
Well-posedness of \eqref{eq:transport_problem_0}, \eqref{eq:stokes_problem_0}
and the existence of a sufficiently regular solution, such that all of the
arguments and terms used below are well-defined, are tacitly assumed without
mentioning explicitly all technical assumptions about the data and coefficients.

\subsection{Weak formulation}
\label{sec:2:2}

Let
$X:=\{v \in L^2(0,T;H^1_0(\Omega)) \mid \partial_t v \in L^2(0,T;H^{-1}(\Omega))\}$
and $Y_1 := H_0^1(\Omega)^d$.
The weak formulation of \eqref{eq:transport_problem_0} reads as follows:
\textit{Find $\concentration \in X$, $\convection \in Y_1$ such that}
\begin{equation}
\label{eq:2:3:A_u_phi_eq_F_phi}
A(\concentration,\convection)(\varphi) = G(\varphi) \quad \forall \varphi \in X\,,
\end{equation}
\textit{where the bilinear form $A: \{X,Y_1\} \times X \rightarrow \mathbb{R}$
and the linear form $G: L^2(0,T;$ $H^{-1}(\Omega)) \rightarrow \mathbb{R}$
are defined by}
\begin{displaymath}
\begin{array}{r@{\,}c@{\,}l}
A(\concentration,\convection)(\varphi) & := &  \displaystyle \int_{I} \big\{
  (\density \partial_{t} \concentration, \varphi)
+ a(\concentration,\convection)(\varphi) \big\}\; \mathrm{d} t
+ (\concentration(0), \varphi(0))\,,\\[1.5ex]
G(\varphi) & := & \displaystyle \int_I
  (\transportforce, \varphi)\; \mathrm{d} t
+ (\concentration_{0},\varphi(0) ) \,,
\end{array}
\end{displaymath}
\textit{with the bilinear form
$a: \{H^1_0(\Omega), H_0^1(\Omega)^d\} \times H^1_0(\Omega) \rightarrow \mathbb{R}$
given by}
\begin{equation}
\label{eq:2:4:Def_a_u_phi}
a(\concentration,\convection)(\varphi) :=
  (\varepsilon \nabla \concentration, \nabla \varphi)
+ (\convection \cdot \nabla \concentration, \varphi)
+ (\alpha \concentration, \varphi)\,.
\end{equation}
Here, $(\cdot, \cdot)$ denotes the inner product of $L^2(\Omega)$ or duality
pairing of $H^{-1}(\Omega)$ with $H^1_0(\Omega)$, respectively. By $\|\cdot \|$
we denote the associated $L^2$-norm.

For the variational formulation of problem \eqref{eq:stokes_problem_0} we define
$Y_2 := L_0^2(\Omega) := \big\{\pressure \in L^2(\Omega) \mid  \int_\Omega \pressure \;
\text{d}\boldsymbol{x}=0\big\}$. Then we get:
\textit{For $\stokesforce \in H^{-1}(\Omega)^d$
find
$\{ \convection, \pressure\} \in Y := Y_1 \times Y_2 =
H_0^1(\Omega)^d \times L_0^2(\Omega)$, such that}
\begin{equation}
\label{eq:2:5:B_vp_psichi_eq_F2_psi}
B(\convection,\pressure)(\boldsymbol{\psi},\chi) = F(\boldsymbol{\psi}) \quad
\forall \{\boldsymbol{\psi}, \chi\} \in Y\,,
\end{equation}
\textit{where the bilinear form $B:Y \times Y \rightarrow \mathbb{R}$
and the linear form $F: H^{-1}(\Omega)^d \rightarrow \mathbb{R}$ are defined by}
\begin{subequations}
\label{eq:2:6:Def_B_and_Def_F2}
\begin{align}
B(\convection,\pressure)(\boldsymbol{\psi},\chi) & :=
  \viscosity (\nabla \convection, \nabla \boldsymbol{\psi})
- (\pressure, \nabla \cdot \boldsymbol{\psi})
+ (\nabla \cdot \convection, \chi)\,,\\
F(\boldsymbol{\psi}) & :=  (\stokesforce, \boldsymbol{\psi})\,.
\end{align}
\end{subequations}

\begin{remark}
The coupling of the transport problem with the stationary Stokes problem is
via the convection variable $\convection$ of the system \eqref{eq:transport_problem_0},
\eqref{eq:stokes_problem_0}. We consider for the error estimation in
Sec.~\ref{sec:3:errorestimation} a coupled system but remark that the coupling
is uni-directional from the Stokes to the transport problem.
\end{remark}

\subsection{Discretization in time}
\label{sec:2:3}

For the discretization in time of the transport problem \eqref{eq:2:3:A_u_phi_eq_F_phi}
we use a discontinuous Galerkin method dG($r$) with an arbitrary polynomial
degree $r\ge 0$.
Let $0=:t_0<t_1<\dots<t_N:=T$ be a partition of the closure of the time domain
$\bar{I}=[0,T]$ into left-open subintervals $I_n:=(t_{n-1},t_n]$, $n=1,\dots,N$,
with time step sizes $\tau_n=t_n-t_{n-1}$ and the global time discretization
parameter $\tau=\max_{n}\,\tau_{n}$.
Therefore, we introduce the time-discrete function space
$X_{\tau}^{\textnormal{dG}(r)}$
for the transport problem.
\begin{equation}
\label{eq:2:7:Def_X_tau_dGr}
 \begin{aligned}
X_{\tau}^{\text{dG}(r)} :=
 \Big\{ & \concentration_{\tau}\in L^{2}(I; H_0^1(\Omega))\big|
 \concentration_{\tau}|_{I_{n}}\in \mathcal{P}_{r}(I_{n}; H_0^1(\Omega)),\\
 & \concentration_{\tau}(0)\in L^2(\Omega), n=1,\dots,N
\Big\}\,,
\end{aligned}
\end{equation}
where $\mathcal{P}_{r}(\bar{I}_{n}; H_0^1(\Omega))$ denotes the space of all
polynomials in time up to degree $r\geq0$ on $I_n$ with values in $H_0^1(\Omega)\,.$

For some discontinuous in time function
$\concentration_{\tau}\in X_{\tau}^{\textnormal{dG}(r)}$ we define
the limits $\concentration_{\tau,n}^{\pm}$ from above and below of
$\concentration_{\tau}$ at $t_n$ as
well as their jump at $t_n$ by
\begin{displaymath}
\begin{array}{lcrclcr}
\concentration_{\tau,n}^{\pm}
& := &
\displaystyle\lim_{t\mapsto t_n\pm0} \concentration_\tau(t) \,,
&
[\concentration_{\tau}]_{n} & := & \concentration_{\tau,n}^{+}
-\concentration_{\tau,n}^{-} \,.
\end{array}
\end{displaymath}

The semidiscretization in time of the the transport problem
\eqref{eq:2:3:A_u_phi_eq_F_phi} then reads as follows:
\textit{Find $\concentration_\tau \in X_{\tau}^{\textnormal{dG}(r)}$,
$\convection \in Y_1$ such that
}
\begin{equation}
\label{eq:2:8:A_tau_u_phi_eq_F_phi}
A_{\tau}(\concentration_\tau,\convection)(\varphi_\tau)
=
G_\tau(\varphi_\tau)
\quad \forall \varphi_\tau \in X_{\tau}^{\text{dG}(r)}\,,
\end{equation}
where the semi-discrete bilinear form and linear form are given by
\begin{subequations}
\label{eq:2:9:Def_A_tau_u_phi_and_Def_F_phi_tau}
\begin{align}
\label{eq:2:9:Def_A_tau_u_phi}
A_{\tau}(\concentration_\tau,\convection)(\varphi_\tau)
& :=
\sum_{n=1}^{N}\int_{I_n}\big\{(\density \partial_{t} \concentration_\tau,\varphi_\tau)
+ a(\concentration_\tau, \convection)(\varphi_\tau)
\big\} \mathrm{d} t\;
\\
\nonumber
& + \sum_{n=2}^{N}(\density[\concentration_\tau]_{n-1},\varphi_{\tau,n-1}^+ )
+ (\density\concentration_{\tau,0}^+,\varphi_{\tau,0}^+)\,,
\\
\label{eq:2:9:Def_F_phi_tau}
G_\tau (\varphi_{\tau}) & :=  \int_I(\transportforce,\varphi_{\tau})\;\mathrm{d}t
+ (\concentration_{0},\varphi_{\tau,0}^+)\,.
\end{align}
\end{subequations}

\subsection{Discretization in space}
\label{sec:2:4}

Next, we describe the Galerkin finite element approximation in space of the
semi-discrete transport problem (\ref{eq:2:8:A_tau_u_phi_eq_F_phi}) and
the flow problem \eqref{eq:2:5:B_vp_psichi_eq_F2_psi}, respectively.
We use Lagrange type finite element spaces of continuous functions that are
piecewise polynomials. For the discretization in space, we consider a
decomposition $\mathcal{T}_{h}$ of the domain $\Omega$ into disjoint elements
$K$, such that $\overline{\Omega}=\cup_{K\in\mathcal{T}_{h}}\overline{K}$.
Here, we choose the elements $K\in\mathcal{T}_{h}$ to be quadrilaterals
for $d=2$ and hexahedrals for $d=3$.
We denote by $h_{K}$ the diameter of the element $K$. The global space
discretization parameter $h$ is given by $h:=\max_{K\in\mathcal{T}_{h}}h_{K}$.
Our mesh adaptation process yields locally refined cells, which is
enabled by using hanging nodes. We point out that
the global conformity of the finite element approach is preserved since the
unknowns at such hanging nodes are eliminated by interpolation between the
neighboring 'regular' nodes; cf.~\cite[Chapter 4.2]{BruchhaeuserBR03} and
\cite{BruchhaeuserCO84} for more details.
On $\mathcal{T}_{h}$ we define the discrete finite element space by
$
V_{h}^{p,n}:=
\big\{v\in C(\overline{\Omega})\mid v_{|K}
\in Q_h^p(K)\,,\forall K\in\mathcal{T}_{h},
\big\}\,,
$
with $n=1,\dots,N$, where $Q_h^p(K)$ is the space defined on the reference element
with maximum degree $p$ in each variable. By replacing $H_0^1(\Omega)$ in the
definition of the semi-discrete function space $X_{\tau}^{\textnormal{dG}(r)}$
in (\ref{eq:2:7:Def_X_tau_dGr}) by $V_h^{p,n}$, we obtain the fully discrete
function space for the transport problem
\begin{equation}
\begin{aligned}
\label{eq:2:10:Def_X_tau_h_dGr_p}
X_{\tau h}^{\text{dG}(r),p} := \Big\{ &
\concentration_{\tau h}\in X_{\tau}^{\text{dG}(r)} \big|
\concentration_{\tau h}|_{I_n} \in \mathcal{P}_r(I_n;H_h^{p_{\concentration},n})
\,,\\
& \concentration_{\tau h}(0) \in H_h^{p_{\concentration},0},
n=1,\dots,N
\Big\}
\subseteq L^{2}(I; H_0^1(\Omega))\,.
\end{aligned}
\end{equation}
The discrete in space function space for the flow problem is given by
\begin{eqnarray}
\label{eq:2:11:Def_Y_h_p}
Y_{h}^{p} & := & (H_h^{p_v})^d \times L_h^{p_p}
\subseteq Y\,,
\end{eqnarray}
\begin{displaymath}
H_h^{p_{\concentration},n}:=V_h^{p_{\concentration},n}\cap H_0^1(\Omega), \quad
H_h^{p_v}:=V_h^{p_v}\cap H_0^1(\Omega), \quad
L_h^{p_p}:=V_h^{p_p}\cap L_0^2(\Omega).
\end{displaymath}
We note that the spatial finite
element space $V_h^{p,n}$ is allowed to be different on
all subintervals $I_n$ which is natural in the context of a
discontinuous Galerkin approximation of the time variable and allows dynamic
mesh changes in time.
Due to the conformity of $H_h^{p_{\concentration},n}$ we get
$X_{\tau h}^{\textnormal{dG}(r),p}\subseteq X_{\tau}^{\textnormal{dG}(r)}$.

The fully discrete discontinuous in time scheme for the transport problem then
reads as follows:
\textit{Find $\concentration_{\tau h} \in X_{\tau h}^{\textnormal{dG}(r),p}$,
$\convection_h \in (H_h^{p_v})^d$ such that}
\begin{equation}
\label{eq:2:12:A_tau_h_u_phi_eq_F_phi}
 A_{\tau}(\concentration_{\tau h}, \convection_h)(\varphi_{\tau h})
=
G_\tau(\varphi_{\tau h})
\quad \forall \varphi_{\tau h} \in X_{\tau h}^{\text{dG}(r),p}\,,
\end{equation}
\textit{with} $A_{\tau}(\cdot,\cdot)(\cdot)$ \textit{and} $G_\tau(\cdot)$
\textit{being defined in \eqref{eq:2:9:Def_A_tau_u_phi_and_Def_F_phi_tau}.}
We note that the bilinear form $a(\cdot,\cdot)(\cdot)$ occurring in
$A_\tau(\cdot,\cdot)(\cdot)$ reads here as
\begin{displaymath}
a(\concentration_{\tau h}, \convection_h)(\varphi_{\tau h}) =
(\varepsilon \nabla \concentration_{\tau h}, \nabla \varphi_{\tau h})
+(\convection_{h} \cdot \nabla \concentration_{\tau h}, \varphi_{\tau h})
+ (\alpha \concentration_{\tau h},\varphi_{\tau h})
\end{displaymath}
for the fully discrete solutions.

The fully discrete scheme for the flow problem reads as follows:
\textit{Find $\{\convection_{h},\pressure_{h}\} \in
Y_{h}^{p}$ such that}
\begin{equation}
\label{eq:2:13:B_v_p_psi_chi_eq_F_psi}
B(\convection_{h},\pressure_{h})(\boldsymbol{\psi}_{h},\chi_{h})
= F(\boldsymbol{\psi}_{h})
\quad \forall \{\boldsymbol{\psi}_{h},\chi_{h}\} \in
Y_{h}^{p}\,,
\end{equation}
\textit{with $B(\cdot,\cdot)(\cdot,\cdot)$ and $F(\cdot)$ being defined in
\eqref{eq:2:6:Def_B_and_Def_F2}.}

\subsection{SUPG stabilization}
\label{sec:2:5}

In this work we consider, for the sake of physical realism,
convection-dominated transport with small diffusion parameter $\varepsilon$
in Eq.~\eqref{eq:transport_problem_0} which, on the hand, poses an additional
challenge to the a posteriori error control but, on the other hand, illustrates
nicely the potential, reliability and efficiency of the DWR-based approach.
For convection-dominated transport, the finite element approximation needs to be
stabilized in order to further reduce spurious and non-physical oscillations of
the discrete solution arising close to sharp fronts or layers. Here, we apply
the streamline upwind Petrov-Galerkin (SUPG) method;
cf. \cite{BruchhaeuserHB79,BruchhaeuserBH81}.
We explicitly note that SUPG stabilization and automatic mesh adaptation
interact strongly; cf. Rem. \ref{rem:2:2}.
Balancing their effects needs particular consideration and has not been
strongly studied so far in the literature;
cf. for instance \cite{BruchhaeuserBSB18}.

The stabilized variant of the fully discrete scheme
\eqref{eq:2:12:A_tau_h_u_phi_eq_F_phi} then reads as follows:
\textit{Find $\concentration_{\tau h} \in X_{\tau h}^{\textnormal{dG}(r),p}$,
$\convection_h \in (H_h^{p_v})^d$ such that}
\begin{equation}
\label{eq:2:14:A_S_u_phi_eq_F_phi}
A_{S}(\concentration_{\tau h}, \convection_h)(\varphi_{\tau h}) =
G_\tau(\varphi_{\tau h})
\quad \forall \varphi_{\tau h} \in X_{\tau h}^{\text{dG}(r),p}\,,
\end{equation}
\textit{with $A_{S}(\concentration_{\tau h}, \convection_h)(\varphi_{\tau h}) :=
A_{\tau}(\concentration_{\tau h},\convection_h)(\varphi_{\tau h})
+S(\concentration_{\tau h}, \convection_h)(\varphi_{\tau h})$
and stabilization term}
\begin{equation}
\label{eq:2:15:Def_S_u_phi}
\begin{aligned}
S(\concentration_{\tau h}, \convection_h)(\varphi_{\tau h}) & :=
\displaystyle\sum_{n=1}^N\int_{I_n}
\sum\limits_{K\in \mathcal{T}_h}\delta_K\big(
r(\concentration_{\tau h}, \convection_h),
\convection_{h} \cdot \nabla \varphi_{\tau h}\big)_K \,\mathrm{d} t
\\
& \quad
+ \displaystyle\sum\limits_{n=2}^{N}\sum\limits_{K\in\mathcal{T}_h}
\delta_K
\big(\density\left[\concentration_{\tau h}\right]_{n-1},
\convection_{h} \cdot \nabla \varphi_{\tau h,n-1}^+\big)_{K}
\\
& \quad
+ \displaystyle\sum\limits_{K\in\mathcal{T}_h}
\delta_K \big(\density \concentration_{\tau h,0}^{+} - \concentration_0,
\convection_{h} \cdot \nabla \varphi_{\tau h,0}^{+} \big)_{K}
\end{aligned}
\end{equation}
\textit{and residual}
\begin{displaymath}
r(\concentration_{\tau h}, \convection_h) :=
\density \partial_{t} \concentration_{\tau h}
- \nabla\cdot\left(\varepsilon\nabla \concentration_{\tau h}\right)
+ \convection_{h} \cdot \nabla \concentration_{\tau h}
+ \alpha \concentration_{\tau h}
- \transportforce \,.
\end{displaymath}

\begin{remark}
\label{rem:2:2}
The proper choice of the stabilization parameter $\delta_K$ is an important
issue in the application of the SUPG approach; cf., e.g.,
\cite{BruchhaeuserJN11,BruchhaeuserJS08,BruchhaeuserJKN18}
and the discussion therein. For time-dependent convection-diffusion-reaction
problems an optimal error estimate for $\delta_K=\mathrm{O}(h)$ is derived
in \cite{BruchhaeuserJN11}.
\end{remark}

\begin{remark}
\label{rem:2:3}
For the error $e = \concentration_{\tau}-\concentration_{\tau h}$
we get by subtracting Eq.~(\ref{eq:2:14:A_S_u_phi_eq_F_phi}) from
Eq.~(\ref{eq:2:8:A_tau_u_phi_eq_F_phi}) the identity
\begin{equation}
\label{eq:2:16:Galerkin_orthogonality_v_v_h}
\begin{aligned}
\sum_{n=1}^{N} & \int_{I_n}
\big\{
(\density \partial_{t} e,\varphi_{\tau h})
+ a(e, \convection_h)(\varphi_{\tau h})
\big\}
\mathrm{d} t\;\\
& \qquad + \sum_{n=2}^{N}(\density[e]_{n-1},\varphi_{\tau h,n-1}^+ )
+ (e_{0}^+,\varphi_{\tau h,0}^+)
\\
&=
S(\concentration_{\tau h}, \convection_h)(\varphi_{\tau h})
- \sum_{n=1}^{N}\int_{I_n}
\big(
(\convection-\convection_{h}) \cdot \nabla \concentration_{\tau},
\varphi_{\tau h}
\big)
\mathrm{d} t\,,
\end{aligned}
\end{equation}
with a non-vanishing right-hand side term depending on the stabilization and the
error in the approximation of the flow field.
Eq.~\eqref{eq:2:16:Galerkin_orthogonality_v_v_h} with the perturbation term on
the right-hand side replaces the standard Galerkin orthogonality of the
space-time finite element approximation.
\end{remark}

\section{A posteriori error estimation}
\label{sec:3:errorestimation}

In this section we derive our DWR error representation for the 
stabilized transport problem \eqref{eq:2:14:A_S_u_phi_eq_F_phi} coupled with the 
flow problem via the convection tensor $\convection_h$ given by 
Eq.~\eqref{eq:2:13:B_v_p_psi_chi_eq_F_psi}.
Here, only goal quantities depending on the unknown $\concentration$ are
studied. For applications of practical interest, physical quantities in terms of
the transport quantity $u$ are typically of higher relevance than quantities in
the unknowns $\convection$ and $\pressure$ of the flow problem. In the sequel,
we denote by $J:X \rightarrow \mathbb{R}$ a user-chosen, physically relevant
target functional represented in the form
\begin{equation}
\label{eq:3:1:Def_J_u}
J(\concentration)=\int_0^T J_1(\concentration(t))\mathrm{d}t + J_2(\concentration(T))\,,
\end{equation}
where $J_1:H^1_0(\Omega)\rightarrow \mathbb R$ or
$J_2:H^1_0(\Omega)\rightarrow \mathbb R$ may be zero. Since we aim at
controlling the respective errors due to the discretization in time as well as
in space, we split the a posteriori error
representation with respect to $J$ into the contributions
\begin{equation}
\label{eq:3:2:J_u_J_u_tauh_eq_J_u_J_u_tau_J_u_tauh}
 J(\concentration)-J(\concentration_{\tau h}) =
 J(\concentration)-J(\concentration_{\tau})
 + J(\concentration_{\tau})-J(\concentration_{\tau h})\,.
\end{equation}

For the respective error representations we define the Lagrangian
functionals
$\mathcal{L}: X\times X \rightarrow \mathbb{R}$,
$\mathcal{L}_\tau: X_{\tau}^{\textnormal{dG}(r)} \times X_{\tau}^{\textnormal{dG}(r)}
\rightarrow \mathbb{R}$, and
$\mathcal{L}_{\tau h}:
X_{\tau h}^{\textnormal{dG}(r),p} \times X_{\tau h}^{\textnormal{dG}(r),p}
\rightarrow \mathbb{R}$ by
\begin{subequations}
\label{eq:3:3:Def_L_u_z_Def_L_tau_u_z_Def_L_tau_h_u_z}
\begin{align}
\label{eq:3:3:Def_L_u_z}
\mathcal{L}(\concentration,\dualz) & :=  J(\concentration)
+ G(\dualz)
- A(\concentration, \convection)(\dualz)\,,
\\
\label{eq:3:3:Def_L_tau_u_z}
\mathcal{L}_{\tau}(\concentration_\tau,\dualz_\tau) & :=
J(\concentration_{\tau}) + G_\tau(\dualz_{\tau})
- A_{\tau}(\concentration_{\tau}, \convection)(\dualz_{\tau})\,,
\\
\label{eq:3:3:Def_L_tau_h_u_z}
\mathcal{L}_{\tau h}(\concentration_{\tau h},\dualz_{\tau h}) & :=
J(\concentration_{\tau h})
+ G_\tau (\dualz_{\tau h})
- A_S(\concentration_{\tau h}, \convection_h)(\dualz_{\tau h})\,.
\end{align}
\end{subequations}
Here, the Lagrange multipliers $\dualz$, $\dualz_\tau,$ and $\dualz_{\tau h}$
are called dual variables in contrast to the primal variables
$\concentration$, $\concentration_\tau,$ and
$\concentration_{\tau h}$; cf. \cite{BruchhaeuserBR12,BruchhaeuserBR01}.
\begin{remark}
For the sake of simplicity, we exclude the convection field $\convection$ and
$\convection_h$ from the primal variables within the Lagrangian functionals 
in \eqref{eq:3:3:Def_L_u_z_Def_L_tau_u_z_Def_L_tau_h_u_z} due to the choice of 
the goal quantity given by Eq.~\eqref{eq:3:1:Def_J_u}.
This can be generalized in a standard fashion by introducing a vector of primal 
unknowns and a respective Lagrangian multiplier as dual variable.
Nevertheless, the coupling is still present within the transport problem and 
results in additional coupling terms in the error representation formula, 
cf. Eq.~\eqref{eq:3:16b:J_u_tau_minus_J_u_tau_h} and 
Rem.~\ref{rem:3:3:AdditionalCouplingTerms}.
\end{remark}
Considering the directional derivatives of the Lagrangian functionals, also
known as
G\^{a}teaux derivatives, with respect to their first argument, i.e.\
\begin{equation}
\label{eq:3:4:Def_Gateaux_derivative}
\mathcal{L}^{\prime}_{\concentration}(\concentration,\dualz)(\varphi) :=
\lim_{t\neq0,t\rightarrow 0}
t^{-1}\big\{\mathcal{L}(\concentration + t\varphi,\dualz)
-\mathcal{L}(\concentration,\dualz)\big\},
\quad \varphi \in X\,,
\end{equation}
leads to the so-called dual problems, cf., e.g., \cite{BruchhaeuserBR12}.
The continuous, semi-discrete, and fully discrete dual solutions
$\dualz~\in~X$, $\dualz_\tau~\in~X_{\tau}^{\textnormal{dG}(r)},$ and
$\dualz_{\tau h} \in X_{\tau h}^{\textnormal{dG}(r),p}$
are determined by the optimality conditions
\begin{subequations}
\label{eq:3:5:Def_L_uprime_u_z_Def_L_tau_uprime_u_z_Def_L_tauh_uprime_u_z}
\begin{align}
\label{eq:3:5:Def_L_uprime_u_z}
\mathcal{L}^{\prime}_{\concentration}(\concentration,\dualz)(\varphi) & =  0
\quad \forall \varphi \in X\,,
\\
\label{eq:3:5:Def_L_tau_uprime_u_z}
\mathcal{L}^{\prime}_{\tau, \concentration}(\concentration_\tau,\dualz_\tau)(\varphi) & =  0
\quad \forall \varphi_{\tau} \in
X_{\tau}^{\text{dG}(r)}\,,
\\
\label{eq:3:5:Def_L_tauh_uprime_u_z}
\mathcal{L}^{\prime}_{\tau h, \concentration}(\concentration_{\tau h},\dualz_{\tau h})(\varphi) & =  0
\quad \forall
\varphi_{\tau h} \in X_{\tau h}^{\text{dG}(r),p}\,.
\end{align}
\end{subequations}
More precisely, the continuous dual solution $\dualz \in X$ is the solution of
\begin{equation}
\label{eq:3:6:A_prime_u_phi_z_eq_J_prime_u_phi}
A^{\prime}(\concentration, \convection)(\varphi,\dualz)
=
J^{\prime}(\concentration)(\varphi)
\quad \forall \varphi \in X\,,
\end{equation}
where
the adjoint bilinear form $A^{\prime}(\cdot, \cdot)(\cdot, \cdot)$ is given by
\begin{equation}
\label{eq:3:7:Def_A_prime_u_phi_z}
A^{\prime}(\concentration, \convection)(\varphi,\dualz)
:=
\int_I\big\{(\varphi,-\density \partial_{t} \dualz)
+a^{\prime}(\concentration, \convection)(\varphi,\dualz)\big\}\mathrm{d}t
+(\varphi(T),\dualz(T))\,.
\end{equation}
We note that for the representation \eqref{eq:3:7:Def_A_prime_u_phi_z} of
$A^{\prime}(\cdot, \cdot)(\cdot, \cdot)$ integration by parts in time is applied, 
which is allowed for weak
solutions $z\in X$; cf., e.g., \cite[Lemma 8.9]{BruchhaeuserR17}.
The derivative $a^{\prime}(\concentration, \convection)(\varphi,\dualz)$ of the 
bilinear form $a(\concentration, \convection)(\dualz)$ in $A^{\prime}$ admits 
the explicit form
\begin{displaymath}
a^{\prime}(\concentration, \convection)(\varphi,\dualz)=
(\varepsilon\nabla \varphi,\nabla \dualz)
+ (\convection \cdot \nabla \varphi,\dualz)
+ (\alpha \varphi,\dualz)\,.
\end{displaymath}
The right-hand side of Eq. \eqref{eq:3:6:A_prime_u_phi_z_eq_J_prime_u_phi} is
given by
\begin{equation}
\label{eq:3:8:Def_J_prime_u_phi}
J^{\prime}(\concentration)(\varphi) :=
\int_I J_1^{\prime}(\concentration)(\varphi)\mathrm{d}t+J_2^{\prime}(\concentration(T))(\varphi(T))\,.
\end{equation}
Further, the semi-discrete dual solution $\dualz_\tau\in X_{\tau}^{\textnormal{dG}(r)}$
and the fully discrete dual solution $\dualz_{\tau h} \in X_{\tau h}^{\textnormal{dG}(r),p}$
satisfy the
equations
\begin{subequations}
\label{eq:3:9:A_tau_prime_u_phi_z_eq_J_prime_u_phi_A_S_prime_u_phi_z_eq_J_prime_u_phi}
\begin{align}
\label{eq:3:9:A_tau_prime_u_phi_z_eq_J_prime_u_phi}
A_{\tau}^{\prime}(\concentration_{\tau}, \convection)(\varphi_{\tau},\dualz_{\tau})
& =
J^{\prime}(\concentration_{\tau})(\varphi_{\tau})
\quad \hspace{0.3cm} \forall \varphi_{\tau}\in X_{\tau}^{\text{dG}(r)}\,,
\\
\label{eq:3:9:A_S_prime_u_phi_z_eq_J_prime_u_phi}
A_{S}^{\prime}(\concentration_{\tau h}, \convection_h)(\varphi_{\tau h},\dualz_{\tau h})
& =
J^{\prime}(\concentration_{\tau h})(\varphi_{\tau h})
\quad \forall \varphi_{\tau h}\in X_{\tau h}^{\text{dG}(r),p}\,,
\end{align}
\end{subequations}
where $A_{\tau}^{\prime}(\cdot, \cdot)(\cdot, \cdot)$ and 
$A_{S}^{\prime}(\cdot, \cdot)(\cdot, \cdot)$ are given by
\begin{displaymath}
\begin{aligned}
A_{\tau}^{\prime}(\concentration_{\tau}, \convection)(\varphi_{\tau},\dualz_{\tau})  :=
& \sum_{n=1}^N\int_{I_n}\big\{
(\varphi_{\tau},-\density \partial_{t} \dualz_{\tau})
+ a^{\prime}(\concentration_{\tau}, \convection)(\varphi_{\tau},\dualz_{\tau})
\big\}\mathrm{d}t
\\
& - \sum_{n=1}^{N-1}(\varphi_{\tau,n}^-,\density[\dualz_{\tau}]_{n})
+(\varphi_{\tau,N}^-,\density \dualz_{\tau,N}^-)\,,
\end{aligned}
\end{displaymath}
and
\begin{displaymath}
\begin{aligned}
A_{S}^{\prime}(\concentration_{\tau h}, \convection_h)&(\varphi_{\tau h},\dualz_{\tau h}) :=
 \sum_{n=1}^N\int_{I_n}\big\{
(\varphi_{\tau h},-\density \partial_{t} \dualz_{\tau h})
+ a_{h}^{\prime}(\concentration_{\tau h}, \convection_h)(\varphi_{\tau h},\dualz_{\tau h})
\big\}\mathrm{d}t
\\
&
+ S^{\prime}(\concentration_{\tau h}, \convection_h)(\varphi_{\tau h},\dualz_{\tau h})
- \sum_{n=1}^{N-1}(\varphi_{\tau h,n}^-,\density[\dualz_{\tau h}]_{n})
+ (\varphi_{\tau h,N}^-,\density \dualz_{\tau h,N}^-)\,.
\end{aligned}
\end{displaymath}

\begin{remark}
We note that the directional derivatives of the Lagrangian functionals with
respect to their second argument leads to the primal problems given by
Eqs.~\eqref{eq:2:3:A_u_phi_eq_F_phi},
\eqref{eq:2:8:A_tau_u_phi_eq_F_phi} and \eqref{eq:2:14:A_S_u_phi_eq_F_phi},
respectively.
\end{remark}

For the derivation of computable representations of the separated error
contributions in Eq.~\eqref{eq:3:2:J_u_J_u_tauh_eq_J_u_J_u_tau_J_u_tauh} we
need the following known result, that is explicitly summarized here in order
to keep this work self-contained.

\begin{lemma}
\label{Bruchhaeuser:Lem:1.1}
Let $\mathcal{Y}$ be a function space and $L$ and $\tilde{L}$ be three times
G\^{a}teaux differentiable functionals on $\mathcal{Y}$.
We seek a stationary point $y_1$ of $L$ on a subspace
$\mathcal{Y}_1\subseteq \mathcal{Y}$:
Find $y_1\in \mathcal{Y}_1$ such that
\begin{equation}
\label{eq:3:10:L_prime_y1_delta_y1}
L^{\prime}(y_1)(\delta y_1) = 0 \quad \forall \delta y_1 \in \mathcal{Y}_1.
\end{equation}
This equation is approximated by a Galerkin method using the functional
$\tilde{L}$ on a subspace $\mathcal{Y}_2\subseteq \mathcal{Y}$.
Hence, the discrete problem seeks $y_2\in \mathcal{Y}_2$ such that
\begin{equation}
\label{eq:3:11:L_tilde_prime_y2_delta_y2}
\tilde{L}^{\prime}(y_2)(\delta y_2) = 0 \quad \forall \delta y_2 \in \mathcal{Y}_2.
\end{equation}
If the continuous solution $y_1$ additionally fulfills
\begin{equation}
\label{eq:3:12:L_prime_y1_y2}
L^{\prime}(y_1)(y_2) = 0\,,
\end{equation}
with the approximated solution $y_2$, we  have the error representation
\begin{eqnarray}
\label{eq:3:13:L_y1_minus_L_tilde_y2}
\begin{aligned}
L(y_1)-\tilde{L}(y_2) =& \frac{1}{2}L^{\prime}(y_2)(y_1-\tilde{y}_2)
\\
&+ \frac{1}{2}(L-\tilde{L})^{\prime}(y_2)(\tilde{y}_2-y_2)
+ (L-\tilde{L})(y_2)
+ \mathcal{R}
\,,
\end{aligned}
\end{eqnarray}
for arbitrary $\tilde{y}_2 \in \mathcal{Y}_2$, where the remainder term
$\mathcal{R}$ is given in terms of $e:=y_1-y_2$ as
\begin{equation}
\label{eq:3:14:Def_R}
\mathcal{R}=\frac{1}{2}
\int_0^1
L^{\prime\prime\prime}(y_2+se)(e,e,e)s(s-1)\mathrm{d}s\,.
\end{equation}
\end{lemma}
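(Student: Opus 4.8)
The plan is to prove the error representation \eqref{eq:3:13:L_y1_minus_L_tilde_y2} by the standard trapezoidal-rule argument for the Dual Weighted Residual method, adapted to the setting of \emph{two} functionals $L$ and $\tilde L$. The essential idea is that $L(y_1)-\tilde L(y_2)$ can be written as an integral along the path $s\mapsto y_2+se$ connecting $y_2$ to $y_1$, and that this integral is approximated by the trapezoidal rule, whose quadrature error is exactly the cubic remainder $\mathcal R$.

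\begin{proof}
First I would split $L(y_1)-\tilde L(y_2)$ artificially as
$L(y_1)-L(y_2)+(L-\tilde L)(y_2)$. The last term already appears on the
right-hand side of \eqref{eq:3:13:L_y1_minus_L_tilde_y2}, so it remains to
treat $L(y_1)-L(y_2)$. Writing $e:=y_1-y_2$ and applying the fundamental
theorem of calculus along the segment $y_2+se$, $s\in[0,1]$, gives
\begin{equation*}
L(y_1)-L(y_2)=\int_0^1 L^{\prime}(y_2+se)(e)\,\mathrm{d}s\,.
\end{equation*}
The trapezoidal rule with nodes $s=0$ and $s=1$ reads
\begin{equation*}
\int_0^1 g(s)\,\mathrm{d}s
=\tfrac12\bigl(g(0)+g(1)\bigr)
+\tfrac12\int_0^1 g^{\prime\prime}(s)\,s(s-1)\,\mathrm{d}s\,,
\end{equation*}
which is itself obtained by integrating by parts twice. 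Choosing
$g(s):=L^{\prime}(y_2+se)(e)$, so that
$g^{\prime\prime}(s)=L^{\prime\prime\prime}(y_2+se)(e,e,e)$, yields
\begin{equation*}
L(y_1)-L(y_2)
=\tfrac12 L^{\prime}(y_1)(e)+\tfrac12 L^{\prime}(y_2)(e)+\mathcal R\,,
\end{equation*}
with $\mathcal R$ precisely as in \eqref{eq:3:14:Def_R}. The three-times
G\^ateaux differentiability of $L$ guarantees that $g\in C^2([0,1])$ so that
all of these manipulations are legitimate.
\end{proof}

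**The next step** is to rewrite the two boundary contributions in terms of the quantities that appear in \eqref{eq:3:13:L_y1_minus_L_tilde_y2}. For the term at $s=1$ I would use the hypothesis: by linearity $L^{\prime}(y_1)(e)=L^{\prime}(y_1)(y_1-y_2)=L^{\prime}(y_1)(y_1)-L^{\prime}(y_1)(y_2)$, and since $y_1$ is a stationary point of $L$ on $\mathcal Y_1\ni y_1$ we have $L^{\prime}(y_1)(y_1)=0$ by \eqref{eq:3:10:L_prime_y1_delta_y1}, while $L^{\prime}(y_1)(y_2)=0$ is exactly the extra assumption \eqref{eq:3:12:L_prime_y1_y2}; hence $\tfrac12 L^{\prime}(y_1)(e)=0$ and this term drops out entirely. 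For the term at $s=0$, I would introduce the arbitrary comparison element $\tilde y_2\in\mathcal Y_2$ and split $e=y_1-y_2=(y_1-\tilde y_2)+(\tilde y_2-y_2)$, so that
\begin{equation*}
\tfrac12 L^{\prime}(y_2)(e)
=\tfrac12 L^{\prime}(y_2)(y_1-\tilde y_2)
+\tfrac12 L^{\prime}(y_2)(\tilde y_2-y_2)\,.
\end{equation*}
The first summand is the primal residual term in \eqref{eq:3:13:L_y1_minus_L_tilde_y2}. For the second, since $\tilde y_2-y_2\in\mathcal Y_2$ and $y_2$ solves the discrete problem \eqref{eq:3:11:L_tilde_prime_y2_delta_y2}, we have $\tilde L^{\prime}(y_2)(\tilde y_2-y_2)=0$, so we may replace $L^{\prime}(y_2)$ by $(L-\tilde L)^{\prime}(y_2)$ at no cost, producing the second term of \eqref{eq:3:13:L_y1_minus_L_tilde_y2}. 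Collecting the surviving pieces gives the claimed identity.

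\emph{The main obstacle} is essentially bookkeeping rather than analysis: one has to be careful that every test element used in invoking the Galerkin orthogonalities genuinely lies in the correct subspace ($\tilde y_2-y_2\in\mathcal Y_2$ is immediate, but $y_2$ must be used as a valid test function in the \emph{continuous} equations, which is where the nonstandard hypothesis \eqref{eq:3:12:L_prime_y1_y2} is doing the work — in the classical DWR lemma one has $\mathcal Y_2\subseteq\mathcal Y_1$ and this is automatic, whereas here it is imposed by hand precisely because the discrete Lagrangian $\tilde L$ (built with the stabilized form $A_S$ and possibly a non-conforming space) need not satisfy a clean subspace inclusion). The only genuinely analytic point is the justification of the two integrations by parts in the trapezoidal identity, which requires $g\in C^2$; this is supplied verbatim by the assumed three-times G\^ateaux differentiability of $L$. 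No estimate on $\mathcal R$ is claimed in the statement, so no further regularity or boundedness work is needed here.
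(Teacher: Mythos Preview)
Your argument is correct and is precisely the standard trapezoidal-rule derivation of this DWR identity; the paper does not give its own proof but simply cites \cite{BruchhaeuserBR12}, where exactly this argument appears. Your handling of the boundary terms---in particular the use of \eqref{eq:3:12:L_prime_y1_y2} to kill $L'(y_1)(y_2)$ when $\mathcal{Y}_2\not\subseteq\mathcal{Y}_1$, and the insertion of $\tilde L'(y_2)(\tilde y_2-y_2)=0$ to convert $L'$ into $(L-\tilde L)'$---matches the cited proof.
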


\begin{proof}
The proof of Lemma~\ref{Bruchhaeuser:Lem:1.1} can be found in \cite{BruchhaeuserBR12}.
\end{proof}

In the following Thm.~\ref{Thm:1.1} we apply the abstract error representation
formula (\ref{eq:3:13:L_y1_minus_L_tilde_y2}) to the Lagrangian
functionals (\ref{eq:3:3:Def_L_u_z})--(\ref{eq:3:3:Def_L_tau_h_u_z}). This step is a
modification of Thm.~5.2 in \cite{BruchhaeuserBR12} due
to the presence of the additional coupling term, cf.~Rem.~\ref{rem:2:3}.
To proceed with our computations, we still introduce the primal and dual
residuals that are defined by means of
\begin{equation}
\label{eq:3:15:primal_dual_residuals}
\rho_{\mathrm{t}}(\concentration)(\varphi)  :=
\mathcal{L}_{\tau,\dualz}^{\prime}(\concentration,\dualz)(\varphi)\,,
\quad \quad
\rho_{\mathrm{t}}^{\ast}(\concentration,\dualz)(\varphi)
:=
\mathcal{L}_{\tau,\concentration}^{\prime}(\concentration,\dualz)(\varphi)\,.
\end{equation}

By using Lemma~\ref{Bruchhaeuser:Lem:1.1} we now get the following result for the
DWR-based error representation.

\begin{theorem}
\label{Thm:1.1}
Let $\{\concentration,\dualz\}\in X \times X$,
$\{\concentration_{\tau},\dualz_{\tau}\}
\in
X_{\tau}^{\textnormal{dG}(r)} \times X_{\tau}^{\textnormal{dG}(r)}$,
and
$\{\concentration_{\tau h},\dualz_{\tau h}\}
\in X_{\tau h}^{\textnormal{dG}(r),p} \times X_{\tau h}^{\textnormal{dG}(r),p}$
denote the stationary points of
$\mathcal{L}, \mathcal{L}_{\tau}$, and $\mathcal{L}_{\tau h}$
on the different levels of discretization, i.e.,
\begin{displaymath}
\begin{aligned}
\mathcal{L}^{\prime}(\concentration,\dualz)(\delta \concentration, \delta \dualz)
= \mathcal{L}_{\tau}^{\prime}(\concentration,\dualz)(\delta \concentration, \delta \dualz)
& = 0 \quad
\forall \{\delta \concentration,\delta \dualz\}\in X \times X\,,
\\
\mathcal{L}_{\tau}^{\prime}(\concentration_{\tau},\dualz_{\tau})
(\delta \concentration_{\tau}, \delta \dualz_{\tau})
& = 0
\quad \forall \{\delta \concentration_{\tau},\delta \dualz_{\tau}\}
\in X_{\tau}^{\text{dG}(r)} \times X_{\tau}^{\text{dG}(r)}\,,
\\
\mathcal{L}_{\tau h}^{\prime}(\concentration_{\tau h},\dualz_{\tau h})
(\delta \concentration_{\tau h}, \delta \dualz_{\tau h})
& = 0
\quad \forall \{\delta \concentration_{\tau h},\delta \dualz_{\tau h}\}
\in X_{\tau h}^{\text{dG}(r),p} \times X_{\tau h}^{\text{dG}(r),p}\,.
\end{aligned}
\end{displaymath}
Additionally, for the error $e = \concentration_{\tau} - \concentration_{\tau h}$
we have the Eq.~\eqref{eq:2:16:Galerkin_orthogonality_v_v_h} of Galerkin
orthogonality type. Then, for the discretization errors in space and time we get
the representation formulas
\begin{subequations}
\label{eq:3:16}
\begin{align}
\label{eq:3:16a:J_u_minus_J_u_tau}
J(\concentration)-J(\concentration_{\tau}) & =
\frac{1}{2}\rho_{\mathrm{t}}(\concentration_{\tau})(\dualz-\tilde{\dualz}_{\tau})
+ \frac{1}{2}\rho_{\mathrm{t}}^{\ast}(\concentration_{\tau},\dualz_{\tau})
(\concentration-\tilde{\concentration}_{\tau})
+ \mathcal{R}_{\tau}\,,
\\
\label{eq:3:16b:J_u_tau_minus_J_u_tau_h}
J(\concentration_{\tau})-J(\concentration_{\tau h}) & =
\frac{1}{2}\rho_{\mathrm{t}}(\concentration_{\tau h})(\dualz_{\tau}-\tilde{\dualz}_{\tau h})
+ \frac{1}{2}
\rho_{\mathrm{t}}^{\ast}(\concentration_{\tau h},\dualz_{\tau h})
(\concentration_{\tau}-\tilde{\concentration}_{\tau h})
\\
\nonumber
& \qquad
+ \frac{1}{2} \mathcal{D}_{\tau h}^{\prime}(\concentration_{\tau h},\dualz_{\tau h})
(\tilde{\concentration}_{\tau h}-\concentration_{\tau h},\tilde{\dualz}_{\tau h}-\dualz_{\tau h})
\\
\nonumber
& \qquad
+ \mathcal{D}_{\tau h}(\concentration_{\tau h},\dualz_{\tau h}) + \mathcal{R}_{h}\,,
\end{align}
\end{subequations}
where $\mathcal{D}_{\tau h}(\cdot,\cdot)$ is given by
\begin{equation}
\label{eq:3:17:Def_D_tau_h_phi_psi}
\mathcal{D}_{\tau h}(\varphi,\psi)
=
S(\varphi, \convection_h)(\psi)
-\sum_{n=1}^{N}\int_{I_n}\big(
 (\convection-\convection_{h})\cdot\nabla \varphi,\psi
 \big)
 \mathrm{d}t
\,,
\end{equation}
with $S(\cdot,\cdot)(\cdot)$ being defined in \eqref{eq:2:15:Def_S_u_phi}.
Here,
$\{\tilde{\concentration}_{\tau},\tilde{\dualz}_{\tau}\}\in X_{\tau}^{\textnormal{dG}(r)}
\times X_{\tau}^{\textnormal{dG}(r)}$,
and
$\{\tilde{\concentration}_{\tau h},\tilde{\dualz}_{\tau h}\} \in
X_{\tau h}^{\textnormal{dG}(r),p} \times X_{\tau h}^{\textnormal{dG}(r),p}$
can be chosen arbitrarily and the remainder terms $\mathcal{R}_{\tau}$ and
$\mathcal{R}_{h}$ have the same structure as the remainder term
\eqref{eq:3:14:Def_R} in Lemma~\ref{Bruchhaeuser:Lem:1.1}.
\end{theorem}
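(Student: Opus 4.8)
The plan is to apply the abstract error representation of Lemma~\ref{Bruchhaeuser:Lem:1.1} twice: once with the continuous and the semi-discrete Lagrangians to get \eqref{eq:3:16a:J_u_minus_J_u_tau}, and once with the semi-discrete and the stabilized fully-discrete Lagrangians to get \eqref{eq:3:16b:J_u_tau_minus_J_u_tau_h}; adding the two identities and invoking the splitting \eqref{eq:3:2:J_u_J_u_tauh_eq_J_u_J_u_tau_J_u_tauh} then proves the theorem. In each application the work is (i) to choose $L$, $\tilde{L}$ and the subspaces $\mathcal{Y}_1,\mathcal{Y}_2$ so that the stationarity and compatibility hypotheses \eqref{eq:3:10:L_prime_y1_delta_y1}--\eqref{eq:3:12:L_prime_y1_y2} are satisfied, and (ii) to rewrite the abstract terms of \eqref{eq:3:13:L_y1_minus_L_tilde_y2} through the primal and dual residuals \eqref{eq:3:15:primal_dual_residuals} and the defect functional $\mathcal{D}_{\tau h}$ of \eqref{eq:3:17:Def_D_tau_h_phi_psi}.

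For the temporal error we take $L = \tilde{L} = \mathcal{L}_{\tau}$ --- viewed on an ambient space $\mathcal{Y}$ containing both $X$ and $X_{\tau}^{\textnormal{dG}(r)}$, on which the definition \eqref{eq:3:3:Def_L_tau_u_z} extends verbatim --- together with $\mathcal{Y}_1 = X \times X$, $\mathcal{Y}_2 = X_{\tau}^{\textnormal{dG}(r)} \times X_{\tau}^{\textnormal{dG}(r)}$, $y_1 = \{\concentration, \dualz\}$ and $y_2 = \{\concentration_{\tau}, \dualz_{\tau}\}$. Since $A_{\tau}$ and $G_{\tau}$ collapse to $A$ and $G$ on time-continuous functions, the hypothesis $\mathcal{L}_{\tau}'(\concentration, \dualz) = 0$ on $X \times X$ is exactly the continuous primal problem \eqref{eq:2:3:A_u_phi_eq_F_phi} and dual problem \eqref{eq:3:6:A_prime_u_phi_z_eq_J_prime_u_phi}, and it yields $\mathcal{L}_{\tau}(\concentration, \dualz) = J(\concentration)$; likewise the stationarity of $y_2$, i.e.\ the semi-discrete primal \eqref{eq:2:8:A_tau_u_phi_eq_F_phi} and dual \eqref{eq:3:9:A_tau_prime_u_phi_z_eq_J_prime_u_phi} equations, gives $\mathcal{L}_{\tau}(\concentration_{\tau}, \dualz_{\tau}) = J(\concentration_{\tau})$, so that the left-hand side of \eqref{eq:3:13:L_y1_minus_L_tilde_y2} equals $J(\concentration) - J(\concentration_{\tau})$. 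Here $\mathcal{Y}_2$ is \emph{not} contained in $\mathcal{Y}_1$, so the compatibility condition \eqref{eq:3:12:L_prime_y1_y2}, namely $\mathcal{L}_{\tau}'(\concentration, \dualz)(\concentration_{\tau}, \dualz_{\tau}) = 0$, must be established by hand: it reduces to the consistency of the dG($r$) time discretization, i.e.\ $A_{\tau}(\concentration, \convection)(\varphi_{\tau}) = G_{\tau}(\varphi_{\tau})$ and $A_{\tau}'(\concentration, \convection)(\varphi_{\tau}, \dualz) = J'(\concentration)(\varphi_{\tau})$ for all $\varphi_{\tau} \in X_{\tau}^{\textnormal{dG}(r)}$, which follows by testing the strong form of the transport equation \eqref{eq:transport_problem_0} and of the adjoint problem \eqref{eq:3:6:A_prime_u_phi_z_eq_J_prime_u_phi} against the time-discontinuous $\varphi_{\tau}$ on each $I_n$ and integrating by parts in time, using that $\concentration, \dualz \in X$ carry no temporal jumps so that the dG jump terms in $A_{\tau}$ and $A_{\tau}'$ cancel. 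Because $L = \tilde{L}$, the second and third terms of \eqref{eq:3:13:L_y1_minus_L_tilde_y2} drop out; expanding $\tfrac12 L'(y_2)(y_1 - \tilde{y}_2) = \tfrac12 \{ \mathcal{L}_{\tau,\concentration}'(\concentration_{\tau}, \dualz_{\tau})(\concentration - \tilde{\concentration}_{\tau}) + \mathcal{L}_{\tau,\dualz}'(\concentration_{\tau}, \dualz_{\tau})(\dualz - \tilde{\dualz}_{\tau}) \}$ and reading off \eqref{eq:3:15:primal_dual_residuals} gives \eqref{eq:3:16a:J_u_minus_J_u_tau}, with $\mathcal{R}_{\tau}$ the remainder \eqref{eq:3:14:Def_R} for $L = \mathcal{L}_{\tau}$ and $e = \{\concentration - \concentration_{\tau}, \dualz - \dualz_{\tau}\}$.

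For the spatial error we take $L = \mathcal{L}_{\tau}$, $\tilde{L} = \mathcal{L}_{\tau h}$, $\mathcal{Y}_1 = X_{\tau}^{\textnormal{dG}(r)} \times X_{\tau}^{\textnormal{dG}(r)}$, $\mathcal{Y}_2 = X_{\tau h}^{\textnormal{dG}(r),p} \times X_{\tau h}^{\textnormal{dG}(r),p}$, $y_1 = \{\concentration_{\tau}, \dualz_{\tau}\}$ and $y_2 = \{\concentration_{\tau h}, \dualz_{\tau h}\}$. Now $\mathcal{Y}_2 \subseteq \mathcal{Y}_1$ by the conformity $X_{\tau h}^{\textnormal{dG}(r),p} \subseteq X_{\tau}^{\textnormal{dG}(r)}$, so \eqref{eq:3:12:L_prime_y1_y2}, i.e.\ $\mathcal{L}_{\tau}'(\concentration_{\tau}, \dualz_{\tau})(\concentration_{\tau h}, \dualz_{\tau h}) = 0$, is immediate from \eqref{eq:2:8:A_tau_u_phi_eq_F_phi} and \eqref{eq:3:9:A_tau_prime_u_phi_z_eq_J_prime_u_phi}; moreover $\mathcal{L}_{\tau}(\concentration_{\tau}, \dualz_{\tau}) = J(\concentration_{\tau})$, while the stationarity of $y_2$ for $\mathcal{L}_{\tau h}$ --- the stabilized primal \eqref{eq:2:14:A_S_u_phi_eq_F_phi} and dual \eqref{eq:3:9:A_S_prime_u_phi_z_eq_J_prime_u_phi} equations --- gives $\mathcal{L}_{\tau h}(\concentration_{\tau h}, \dualz_{\tau h}) = J(\concentration_{\tau h})$, so the left-hand side of \eqref{eq:3:13:L_y1_minus_L_tilde_y2} is $J(\concentration_{\tau}) - J(\concentration_{\tau h})$. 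The decisive step is to evaluate the defect functional $L - \tilde{L} = \mathcal{L}_{\tau} - \mathcal{L}_{\tau h}$: the $J$ and $G_{\tau}$ contributions cancel, leaving $-A_{\tau}(\varphi, \convection)(\psi) + A_{\tau}(\varphi, \convection_h)(\psi) + S(\varphi, \convection_h)(\psi)$, and as the flow field enters $A_{\tau}$ only through the convective term $(\convection \cdot \nabla \varphi, \psi)$ of $a(\varphi, \convection)(\psi)$ this equals $-\sum_{n=1}^{N} \int_{I_n} ((\convection - \convection_h) \cdot \nabla \varphi, \psi)\,\mathrm{d}t + S(\varphi, \convection_h)(\psi) = \mathcal{D}_{\tau h}(\varphi, \psi)$ --- precisely the perturbation appearing on the right-hand side of the Galerkin-orthogonality-type identity \eqref{eq:2:16:Galerkin_orthogonality_v_v_h}, and the feature that distinguishes the present result from Thm.~5.2 in \cite{BruchhaeuserBR12}. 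Substituting $L - \tilde{L} = \mathcal{D}_{\tau h}$ into \eqref{eq:3:13:L_y1_minus_L_tilde_y2}, rewriting $\tfrac12 L'(y_2)(y_1 - \tilde{y}_2)$ via \eqref{eq:3:15:primal_dual_residuals} as $\tfrac12 \rho_{\mathrm{t}}(\concentration_{\tau h})(\dualz_{\tau} - \tilde{\dualz}_{\tau h}) + \tfrac12 \rho_{\mathrm{t}}^{\ast}(\concentration_{\tau h}, \dualz_{\tau h})(\concentration_{\tau} - \tilde{\concentration}_{\tau h})$, and identifying $\tfrac12 (L - \tilde{L})'(y_2)(\tilde{y}_2 - y_2)$ with $\tfrac12 \mathcal{D}_{\tau h}'(\concentration_{\tau h}, \dualz_{\tau h})(\tilde{\concentration}_{\tau h} - \concentration_{\tau h}, \tilde{\dualz}_{\tau h} - \dualz_{\tau h})$ and $(L - \tilde{L})(y_2)$ with $\mathcal{D}_{\tau h}(\concentration_{\tau h}, \dualz_{\tau h})$, yields \eqref{eq:3:16b:J_u_tau_minus_J_u_tau_h}, with $\mathcal{R}_{h}$ the remainder \eqref{eq:3:14:Def_R} for $L = \mathcal{L}_{\tau}$ and $e = \{\concentration_{\tau} - \concentration_{\tau h}, \dualz_{\tau} - \dualz_{\tau h}\}$.

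The step I expect to be the main obstacle is the compatibility condition \eqref{eq:3:12:L_prime_y1_y2} in the temporal split: in the spatial split it holds for free since $\mathcal{Y}_2 \subseteq \mathcal{Y}_1$ and the fully-discrete solution is itself an admissible test function in the semi-discrete primal and dual equations, whereas on the continuous level it genuinely requires the dG-consistency of the exact primal and dual solutions tested against time-discontinuous functions, hence the integration-by-parts-in-time argument and the observation that $\concentration, \dualz \in X$ have no jumps. A minor point to keep in view is that the remainders $\mathcal{R}_{\tau}$ and $\mathcal{R}_{h}$ contain only third derivatives of $\mathcal{L}_{\tau}$, which coincide with those of $J$, because $A_{\tau}$, the coupling term $(\convection - \convection_h) \cdot \nabla(\cdot)$ and the SUPG form $S$ are bilinear or affine in their arguments; in particular $\mathcal{R}_{\tau} = \mathcal{R}_{h} = 0$ whenever the goal functional $J$ from \eqref{eq:3:1:Def_J_u} is linear.
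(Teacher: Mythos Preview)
Your proposal is correct and follows essentially the same route as the paper: apply Lemma~\ref{Bruchhaeuser:Lem:1.1} twice with exactly the identifications you state, observe $\mathcal{L}_{\tau}-\mathcal{L}_{\tau h}=\mathcal{D}_{\tau h}$, and read off the residual notation. The only noteworthy difference is in how you justify the compatibility condition \eqref{eq:3:12:L_prime_y1_y2} for the temporal split: you argue via dG-consistency (test the strong form on each $I_n$, use that $\concentration,\dualz\in X$ have no temporal jumps), whereas the paper instead invokes the density of $X$ in $L^2(I;H_0^1(\Omega))$ to extend the weak primal equation \eqref{eq:3:20:int_I_primal} from test functions in $X$ to test functions in $X_{\tau}^{\textnormal{dG}(r)}\subset L^2(I;H_0^1(\Omega))$. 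The density argument has the advantage of working directly at the level of weak solutions without appealing to a strong form, while your consistency argument is more explicit about why the dG jump terms disappear; both are standard and lead to the same conclusion.
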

%
%
\begin{remark}
\label{rem:3:3:AdditionalCouplingTerms}
We note that within the spatial error representation formula 
\eqref{eq:3:16b:J_u_tau_minus_J_u_tau_h} additional terms due to the coupling 
occur besides the terms due to stabilization, 
cf. Eq.~\eqref{eq:3:17:Def_D_tau_h_phi_psi}.
This is an extension of our previous results obtained in \cite{BruchhaeuserBSB18},
where a convection-dominated transport problem was considered for a given
convection tensor not obtained by a Stokes problem that has to be solved
additionally, cf. the algorithm described in Sec.~\ref{sec:4:practical_aspects}.
\end{remark}
%
\begin{proof}
The proof is related to that one of Thm.~5.2 in \cite{BruchhaeuserBR12}.
Evaluating the Lagrangian functionals at the respective primal and dual
solutions, there holds that
\begin{displaymath}
J(\concentration)=\mathcal{L}(\concentration,\dualz)\,, \quad
J(\concentration_{\tau})=\mathcal{L}_{\tau}(\concentration_{\tau},\dualz_{\tau})\,, \quad
J(\concentration_{\tau h})=\mathcal{L}_{\tau h}(\concentration_{\tau h},\dualz_{\tau h})\,.
\end{displaymath}
Since the additional jump terms in $\mathcal{L}_{\tau}$ vanish for a continuous
solution $\concentration \in X$, we get the following representation for the
temporal and spatial error, respectively,
\begin{subequations}
\begin{align}
\label{eq:3:18:J_u_minus_J_u_tau_eq}
J(\concentration) - J(\concentration_{\tau}) & =
\mathcal{L}(\concentration,\dualz) - \mathcal{L}_{\tau}(\concentration_{\tau},\dualz_{\tau})
= \mathcal{L}_{\tau}(\concentration,\dualz) - \mathcal{L}_{\tau}(\concentration_{\tau},\dualz_{\tau})
\,,
\\
\label{eq:3:18:J_u_tau_minus_J_u_tau_h_eq}
J(\concentration_{\tau})-J(\concentration_{\tau h}) & =
\mathcal{L}_{\tau}(\concentration_{\tau},\dualz_{\tau})
- \mathcal{L}_{\tau h}(\concentration_{\tau h},\dualz_{\tau h})\,.
\end{align}
\end{subequations}

To prove the assertion (\ref{eq:3:16a:J_u_minus_J_u_tau}) for the temporal error,
we apply Lemma~\ref{Bruchhaeuser:Lem:1.1} with the identifications
\begin{displaymath}
L = \mathcal{L}_{\tau}\,,\;\;
\tilde{L}=\mathcal{L}_{\tau}\,,\;\;
\mathcal{Y}_1 = X \times X\,,\;\;
\mathcal{Y}_2 = X_{\tau}^{\text{dG}(r)} \times X_{\tau}^{\text{dG}(r)}
\end{displaymath}
to the identity \eqref{eq:3:18:J_u_minus_J_u_tau_eq}. Further, we have to choose
$\mathcal Y:=\mathcal Y_1+ \mathcal Y_2$ since
here {$X_{\tau}^{\textnormal{dG}(r)} \nsubseteq X$}. Thus, we have to verify
condition \eqref{eq:3:12:L_prime_y1_y2}, that now reads as
$\mathcal{L}^{\prime}(\concentration, \dualz)(\concentration_{\tau}, \dualz_{\tau}) = 0$,
or equivalently,
\begin{equation}
\label{eq:3:19:L_u_prime_L_z_prime_zero}
\mathcal{L}_{\concentration}^{\prime}(\concentration, \dualz)(\concentration_{\tau}) = 0
\quad\text{and}\quad
\mathcal{L}_{\dualz}^{\prime}(\concentration, \dualz)(\dualz_{\tau}) = 0\,.
\end{equation}
We only give the proof of the second equation in \eqref{eq:3:19:L_u_prime_L_z_prime_zero}.
The first one can be proved analogously.
To show that $\mathcal{L}_{\dualz}^{\prime}(\concentration, \dualz)(\dualz_{\tau}) = 0$,
we rewrite Eq.~\eqref{eq:3:19:L_u_prime_L_z_prime_zero} as
\begin{displaymath}
\displaystyle \sum_{n=1}^N \int_{I_n} \big\{
(\transportforce-\density \partial_{t} \concentration,\dualz_{\tau})
-a(\concentration, \convection)(\dualz_{\tau})
\big\}\mathrm{d}t = 0\,.
\end{displaymath}
By construction, the continuous solution $\concentration$ satisfies that
\begin{equation}
\label{eq:3:20:int_I_primal}
\displaystyle \int_{I} \big\{
(\density \partial_{t} \concentration,\varphi)+a(\concentration, \convection)(\varphi)
\big\}\mathrm{d}t = \int_{I}(\transportforce ,\varphi)\mathrm{d}t \quad
\forall\varphi \in X\,.
\end{equation}
Since $X$ is dense in $L^2(I;H_0^1(\Omega))$ with respect to the norm of
$L^2(I;H_0^1(\Omega))$ and since no time derivatives of $\varphi$
arise in \eqref{eq:3:20:int_I_primal}, this equation is also satisfied for all
$\varphi \in L^2(I;H_0^1(\Omega))$. The inclusion
$\dualz_{\tau} \in X_{\tau}^{\textnormal{dG}(r)} \subset L^2(I;H_0^1(\Omega))$
then implies that the second equation in \eqref{eq:3:19:L_u_prime_L_z_prime_zero}
is fulfilled.

Now, applying Lemma~\ref{Bruchhaeuser:Lem:1.1} with the above-made
identifications yields that
\begin{align}
\nonumber
J(\concentration)-J(\concentration_{\tau}) & =
\mathcal{L}_{\tau}(\concentration,\dualz)
- \mathcal{L}_{\tau}(\concentration_{\tau},\dualz_{\tau})
\\
\label{eq:3:21:replace_1}
& =
\frac{1}{2}\mathcal{L}_{\tau}^{\prime}(\concentration_{\tau},\dualz_{\tau})
(\concentration - \tilde{\concentration}_{\tau},\dualz - \tilde{\dualz}_{\tau})
+ \mathcal{R}_{\tau}
\,.
\end{align}
With the definition of the primal and dual residuals given
in \eqref{eq:3:15:primal_dual_residuals}, Eq.\eqref{eq:3:21:replace_1} can be
rewritten as
\begin{displaymath}
J(\concentration)-J(\concentration_{\tau}) =
\frac{1}{2}\rho_{\mathrm{t}}(\concentration_{\tau})(\dualz-\tilde{\dualz}_{\tau})
+ \frac{1}{2}\rho_{\mathrm{t}}^{\ast}(\concentration_{\tau},\dualz_{\tau})
(\concentration-\tilde{\concentration}_{\tau})
+ \mathcal{R}_{\tau}
\,,
\end{displaymath}
where the remainder term $\mathcal{R}_{\tau}$ is given by
\begin{displaymath}
\mathcal{R}_{\tau}=\frac{1}{2}\int_0^1 \mathcal{L}_{\tau}^{\prime\prime\prime}
(\concentration_{\tau}+se,\dualz_{\tau}+se^{\ast})(e,e,e,e^{\ast},e^{\ast},e^{\ast})s(s-1)
\mathrm{d}s\,,
\end{displaymath}
with the `primal' and `dual' errors $e:=\concentration-\concentration_{\tau}$ and
$e^{\ast}:=\dualz-\dualz_{\tau}$, respectively.
This proves the assertion \eqref{eq:3:16a:J_u_minus_J_u_tau}.

To prove the spatial error representation (\ref{eq:3:16b:J_u_tau_minus_J_u_tau_h}),
we apply Lemma~\ref{Bruchhaeuser:Lem:1.1} with the identifications
\begin{displaymath}
L = \mathcal{L}_{\tau}\,,\;
\tilde{L} = \mathcal{L}_{\tau h}\,,\;\;
\mathcal{Y}_1 = X_{\tau}^{\text{dG}(r)} \times X_{\tau}^{\text{dG}(r)}\,,\;\;
\mathcal{Y}_2 = X_{\tau h}^{\text{dG}(r),p} \times X_{\tau h}^{\text{dG}(r),p}
\end{displaymath}
to Eq.~\eqref{eq:3:18:J_u_tau_minus_J_u_tau_h_eq}.
In this case, we have $\mathcal Y_2\subseteq \mathcal Y_1$ since
$X_{\tau h}^{\textnormal{dG}(r),p}\subseteq X_{\tau}^{\textnormal{dG}(r)}$.
Hence, we can choose $\mathcal{Y}:=\mathcal{Y}_1$ in Lemma~\ref{Bruchhaeuser:Lem:1.1}
and condition \eqref{eq:3:12:L_prime_y1_y2} is directly satisfied.
Now, applying Lemma~\ref{Bruchhaeuser:Lem:1.1} with these identifications
implies that
\begin{align}
\nonumber
J(\concentration_{\tau})-J(\concentration_{\tau h}) & =
\mathcal{L}_{\tau}(\concentration_{\tau},\dualz_{\tau})
- \mathcal{L}_{\tau h}(\concentration_{\tau h},\dualz_{\tau h})
\\
\label{eq:3:22:replace_2}
& =
\frac{1}{2}\mathcal{L}_{\tau}^{\prime}(\concentration_{\tau h},\dualz_{\tau h})
(\concentration_{\tau} - \tilde{\concentration}_{\tau h},\dualz_{\tau}
- \tilde{\dualz}_{\tau h})
\\
\nonumber
& \qquad
+ \frac{1}{2}
(\mathcal{L}_{\tau}-\mathcal{L}_{\tau h})^{\prime}(\concentration_{\tau h},\dualz_{\tau h})
(\tilde{\concentration}_{\tau h}-\concentration_{\tau h},\tilde{\dualz}_{\tau h}-\dualz_{\tau h})
\\
\nonumber
& \qquad
+ (\mathcal{L}_{\tau}-\mathcal{L}_{\tau h})(\concentration_{\tau h},\dualz_{\tau h}) + \mathcal{R}_{h}
\,.
\end{align}
Again, using the definition in \eqref{eq:3:15:primal_dual_residuals} of the
primal and dual residual as well as the definition of $\mathcal{D}_{\tau h}$
given by Eq.~(\ref{eq:3:17:Def_D_tau_h_phi_psi}),
Eq.~(\ref{eq:3:22:replace_2}) can be
rewritten as
\begin{align*}
\nonumber
J(\concentration_{\tau})-J(\concentration_{\tau h}) & =
\frac{1}{2}\rho_{\mathrm{t}}(\concentration_{\tau h})(\dualz_{\tau}-\tilde{\dualz}_{\tau h})
+ \frac{1}{2}\rho_{\mathrm{t}}^{\ast}(\concentration_{\tau h},\dualz_{\tau h})
(\concentration_{\tau}-\tilde{\concentration}_{\tau h})
\\
\nonumber
& \qquad
+ \frac{1}{2} \mathcal{D}_{\tau h}^{\prime}(\concentration_{\tau h},\dualz_{\tau h})
(\tilde{\concentration}_{\tau h}-\concentration_{\tau h},\tilde{\dualz}_{\tau h}-\dualz_{\tau h})
\\
\nonumber
& \qquad
+ \mathcal{D}_{\tau h}(\concentration_{\tau h},\dualz_{\tau h}) + \mathcal{R}_{h}\,,
\end{align*}
where the remainder term $\mathcal{R}_{h}$ is given by
\begin{displaymath}
\mathcal{R}_{h}=\frac{1}{2}\int_0^1 \mathcal{L}_{\tau}^{\prime\prime\prime}
(\concentration_{\tau h}+se,\dualz_{\tau h}+se^{\ast})(e,e,e,e^{\ast},e^{\ast},e^{\ast})s(s-1)
\mathrm{d}s\,,
\end{displaymath}
with the `primal' and `dual' errors $e:=\concentration_{\tau}-\concentration_{\tau h}$
and
$e^{\ast}:=\dualz_{\tau}-\dualz_{\tau h}$.
This proves the assertion (\ref{eq:3:16b:J_u_tau_minus_J_u_tau_h}).
\end{proof}

\section{Practical aspects}
\label{sec:4:practical_aspects}

Here we present the underlying adaptive algorithm,
illustrate some practical aspects for the realization of the
adaptivity process as well as for the software implementation and
give the definition of the (localized) error indicators.

Our space-time adaptivity and mesh refinement strategy uses the following
algorithm.

\noindent\rule{\textwidth}{1pt}
  \begin{center}
   \textbf{Algorithm: goal-oriented space-time adaptivity}
  \end{center}
\vspace{-0.3cm}
\noindent\rule{\textwidth}{0.5pt}
\textbf{Initialization:}
Generate the initial space-time slabs
$Q^{1}_n=\Omega_h^{n,1}\times I_{\tau}^{n,1}$, $n=1,\dots,N^1$,
with $\bar{I}=\cup_n\, \bar{I}_{\tau}^{n,1}$,
for the goal-oriented adaptive transport problem.

\noindent\rule{\textwidth}{0.5pt}
\textbf{DWR-loop $\ell=1,\dots$:}
\begin{enumerate}
\item[0.] %
  Find the solutions $\{\convection_{h},\pressure_{h}\} \in Y_{h}^{p}$
  of the Stokes flow problem (\ref{eq:stokes_problem_0}), if the corresponding
  mesh has changed.

\item \textbf{Find the primal solution}
  $\concentration_{\tau h} \in X_{\tau h}^{\textnormal{primal}}$
  of problem (\ref{eq:transport_problem_0}).

\item \textbf{Break if the goal yields convergence}.

\item \textbf{Find the dual solution}
  $\dualz_{\tau h} \in X_{\tau h}^{\textnormal{dual}}$
  of problem (\ref{eq:transport_problem_0}).

\item \textbf{Evaluate the a posteriori space-time error indicators}
  $\eta_{h}$ and $\eta_{\tau}$ given by Eq. \eqref{eq:4:2:eta_h} and
   \eqref{eq:4:1:eta_tau}, respectively.

\item \textbf{Mark the slabs} $Q^{\ell}_{\tilde{n}}$, $\tilde{n}\in\{1,\dots,N^\ell\}$,
  \textbf{for temporal refinement} if the corresponding $\eta_{\tau}^{\tilde{n}}$
  is in the set of $\theta_\tau^\textnormal{top}$ percent of the worst indicators.

\item \textbf{Mark the cells} $\tilde{K} \in \Omega_h^{n,\ell}$
  \textbf{for spatial refinement} if the corresponding $\eta_h^{n}|_{\tilde{K}}$
  is in the set of $\theta_h^\textnormal{top}$ percent of the worst indicators,
  \textbf{or}, respectively,
  mark \textbf{for spatial coarsening} if $\eta_h^{n}|_{\tilde{K}}$ is in the set of
  $\theta_h^\textnormal{bottom}$ percent of the best indicators.

\item \textbf{Execute spatial adaptations} on all slabs under the use of mesh
  smoothing operators.

\item \textbf{Execute temporal refinements of slabs}.

\item Increase $\ell$ to $\ell+1$ and return to Step~0.
\end{enumerate}
\vspace{-0.3cm}
\noindent\rule{\textwidth}{0.5pt}

Regarding this algorithm, we note the following issues.
\begin{remark}
\label{rem:4:1:primal_and_dual_spaces}
~\\
\vspace{-0.6cm}
\begin{itemize}
\item The primal and dual spaces in the Steps 1 and 3 of the algorithm,
$X_{\tau h}^{\textnormal{primal}} =
X_{\tau h}^{\textnormal{dG}(r),\, \textnormal{cG}(p)}$ and
$X_{\tau h}^{\textnormal{dual}} =
X_{\tau h}^{\textnormal{dG}(s),\, \textnormal{cG}(q)}$,
must be chosen properly, i.e. $p < q$ and $r < s$.

\item Within the Steps 1, 3 and 4 of the algorithm, the computed convection field
$\convection_h$ of the Stokes problem is interpolated to the adaptively refined
spatial triangulation of the space-time slabs.

\item Technical details of the implementation are given in
\cite{BruchhaeuserKBB17}.
\end{itemize}
\end{remark}

A new software, the \texttt{dwr-stokes-condiffrea} module
of the \texttt{DTM++} project, was developed for the implementation of the
adaptive algorithm.
The new module is an extension of the published open-source module
\texttt{dwr-diffusion} of the \texttt{DTM++} project;
cf. \cite{BruchhaeuserKBB17}.
The extensions are the implementation of the space-time tensor-product
finite element spaces for the primal and dual problem on each slab and
the coupling of the transport solver to a separated flow module integrated
in the software platform.
In detail, a tensor product of the $d$-dimensional spatial finite
element space with an one-dimensional temporal finite element space is implemented.
The temporal finite element space is based on a discontinuous Galerkin
method of arbitrary order $r$ on a one-dimensional triangulation.
The temporal triangulation on a space-time slab is shared by the primal and
dual problem. The temporal polynomial degree of the primal and dual problem can be
chosen arbitrarily. These features enable the full space-time adaptivity our and
the flexible choice of polynomial degrees in space and time.
Further, the software platform provides an encapsulated flow module,
an implementation of a quasi-stationary Stokes problem, that is coupled with
a convection-diffusion transport solver. The coupling of the transport and flow
solvers is done via the convection tensor field. The latter needs to be
interpolated (or projected) from the Stokes solver to the adaptively refined
spatial meshes and for the respective degrees of freedom in time on a slab of
the transport problem.
For the future, this concept enables the realization of multirate,
fully space-time adaptive extensions of the current schemes and
the incorporation of more sophisticated flow problems.
Our simulation tools of the \texttt{DTM++} project are frontend solvers
for the \texttt{deal.II} library; cf. \cite{BruchhaeuserABCD19}.

Finally, we give some remarks regarding the localization of the error representations
that are derived in Thm.~\ref{Thm:1.1}. Their practical realization and the definition
of error indicators $\eta_{\tau}$ and $\eta_{h}$ is obtained by neglecting the
remainder terms $\mathcal{R}_{\tau}$ and $\mathcal{R}_{h}$ of the result given in
Thm.~\ref{Thm:1.1} and splitting the resulting quantities into elementwise contributions.

\begin{align}
\label{eq:4:1:eta_tau}
\nonumber
J(\concentration)-J(\concentration_{\tau}) & \doteq
\frac{1}{2}\rho_{\mathrm{t}}^{n}(\concentration_{\tau})(\dualz-\tilde{\dualz}_{\tau})
+ \frac{1}{2}\rho_{\mathrm{t}}^{\ast,n}(\concentration_{\tau},\dualz_{\tau})
(\concentration-\tilde{\concentration}_{\tau})
\\
& =: \eta_{\tau} = \displaystyle \sum_{n=1}^{N}
\eta_{\tau}^{n}\,,
\end{align}

\begin{align}
\label{eq:4:2:eta_h}
\nonumber
J(\concentration_{\tau})-J(\concentration_{\tau h}) & \doteq
\frac{1}{2}\rho_{\mathrm{t}}^{n}(\concentration_{\tau h})(\dualz_{\tau}-\tilde{\dualz}_{\tau h})
+ \frac{1}{2}
\rho_{\mathrm{t}}^{\ast,n}(\concentration_{\tau h},\dualz_{\tau h})
(\concentration_{\tau}-\tilde{\concentration}_{\tau h})
\\
\nonumber
&
+ \frac{1}{2} \mathcal{D}_{\tau h}^{\prime,n}(\concentration_{\tau h},\dualz_{\tau h})
(\tilde{\concentration}_{\tau h}-\concentration_{\tau h},\tilde{\dualz}_{\tau h}-\dualz_{\tau h})
+ \mathcal{D}_{\tau h}^{n}(\concentration_{\tau h},\dualz_{\tau h})
\\
& =: \eta_h = \displaystyle \sum_{n=0}^{N} \eta_{h}^{n}
= \displaystyle \sum_{n=0}^{N}
 \sum\limits_{K\in\mathcal{T}_h^n} \eta_{h,K}^{n}\,.
\end{align}

To compute the error indicators $\eta_{\tau}$ and $\eta_{h}$ we replace
all unknown solutions by the approximated fully discrete solutions
$\concentration_{\tau h} \in X_{\tau h}^{\textnormal{dG}(r),\, \textnormal{cG}(p)}$
and
$\dualz_{\tau h} \in X_{\tau h}^{\textnormal{dG}(s),\, \textnormal{cG}(q)}$,
with $r < s$ and $p < q$,
whereby the arising weights are approximated in the following way.

\begin{itemize}
\itemsep1.5ex

\item We put $\concentration-\tilde{\concentration}_{\tau} \approx
\operatorname{E}_{\tau}^{r+1}(\concentration_{\tau h}) - \concentration_{\tau h}$
with $\operatorname{E}_{\tau}^{r+1}(\cdot)$ denoting the extrapolation in time operator
thats acts on a time cell of length $\tau$ and lifts the solution to a piecewise polynomial
of degree ($r$+$1$) in time.

We note that the additional solution for the (local) extrapolation in time on a
specific time cell is here interpolated from the previous time cell or the
initial condition $\concentration_0$ in the left end of the time cell. For this,
the previous time cell is located on the same slab $Q_n^\ell$
or the previous one $Q_{n-1}^\ell$, where the latter case requires an
additional interpolation between two spatial triangulations.

In future works, this concept can be extended to a patchwise higher-order
extrapolation in time by using the solutions of two neighboring time cells
with an order in time of ($2r$+$1$) on the $2\tau$ patch time cell.

\item We put $\concentration_{\tau}-\tilde{\concentration}_{\tau h} \approx
\operatorname{E}_{2h}^{2p}(\concentration_{\tau h}) - \concentration_{\tau h}$,
with $\operatorname{E}_{2h}^{2p}(\cdot)$ denoting the extrapolation in space operator
that acts on a patched cell of size $2h$ and lifts the solution to a piecewise polynomial
of degree $2p$ on the reference cell corresponding to the patched cell of width $2h$.

We note that the extrapolation operator in space is implemented in the
\texttt{deal.II} library for quadrilateral and hexahedral finite elements and
continuous discrete functions of piecewise polynomials with degree $p$ in each
variable. The application of this operator requires the spatial triangulation
on each slab being at least once globally refined to construct the patched cells
of size $2h$.
Thereby, an extrapolation degree of at most $2p$ (and not $2p+1$) is due to the
shared degrees of freedom on the edges or faces of the continuous FE solution.

\item We put $\dualz-\tilde{\dualz}_{\tau} \approx
\dualz_{\tau h} - \operatorname{R}_{\tau}^{r}(\dualz_{\tau h})$
with $\operatorname{R}_{\tau}^{r}(\cdot)$ denoting the restriction in time operator
on a time cell that restricts the solution to a polynomial of degree $r < s$.

We note that the restriction operator in time is implemented in our software since
deal.II is currently not able to operate on ($d$+$1$)-dimensional tensor-product
solutions. This is done by a Lagrangian interpolation in time to the primal
space of the dual solution and an additional interpolation back to the dual space.

\item We put $\dualz_{\tau}-\tilde{\dualz}_{\tau h} \approx
\dualz_{\tau h} - \operatorname{R}_{h}^{p}(\dualz_{\tau h})$ with
$\operatorname{R}_{h}^{p}(\cdot)$ denoting the restriction in space operator
that acts on a spatial cell and restricts the solution to a polynomial
of degree $p < q$ on the corresponding reference cell.

We note that the restriction operator in space is implemented in the
\texttt{deal.II} library for dimension $d=2,3$ as back-interpolation operator
between two finite element spaces that are here the dual finite element space
and the intermediate primal finite element space.
\end{itemize}

We recall that within the DWR framework the respective weights have to be
approximated using a suitable technique. The most common way for this
approximation is the application of a patch-wise higher-order extrapolation;
cf.~\cite{BruchhaeuserBR12,BruchhaeuserBGR10,BruchhaeuserSV08}.
In this work, the respective operators are chosen in the described manner
due to the specific character of linear convection-dominated problems.
Our motivation for this results from a comparative study between
higher-order extrapolation and higher-order finite element approximations that 
has been done for a steady-state variant of a convection-dominated problem in
\cite{BruchhaeuserBSB19}.

Since we do not consider an analytical solution for the Stokes flow problem
\eqref{eq:stokes_problem_0} in some of our numerical examples in
Sec.~\ref{sec:5:examples},
we neglect the coupling terms appearing in (\ref{eq:4:2:eta_h}).
Instead, we approximate $\{\convection_h,\pressure_h\}$ on a sufficiently
refined mesh with the Stokes stable finite element pairs
$Q_2-Q_1$ or even $Q_4-Q_2$ to avoid spatial approximation errors in the transport
problem.

For measuring the accuracy of the error estimator, we will study in our numerical
convergence experiments (cf.\ Sec.~\ref{sec:5:1}) the effectivity index
\begin{equation}
\label{eq:4:3:Ieff}
\mathcal{I}_{\textnormal{eff}} = \left|\frac{\eta_{\tau}+\eta_{h}}{J(\concentration)
-J(\concentration_{\tau h})}\right|
\end{equation}
as the ratio of the estimated error over the exact error.
Desirably, the index $\mathcal{I}_{\textnormal{eff}}$ should be close to one.

\section{Numerical examples}
\label{sec:5:examples}

In the following section we study the convergence, computational efficiency
and stability of the introduced goal-oriented adaptivity approach for
the coupled transport and flow problem.
The first example in Sec.~\ref{sec:5:1} is an academic problem with a given
analytical solution to study the space-time higher-order convergence behavior
with a constant convection field for a non-stabilized convection-diffusion
transport and a stabilized convection-dominated transport.
The lowest-order results can be compared with our preceding published works
\cite{BruchhaeuserBSB18,BruchhaeuserKBB17}.
The second example in Sec.~\ref{sec:5:2} is motivated by problem of physical
relevance in which we simulate a convection-dominated transport with goal-oriented
adaptivity of a species through a channel with a constraint.

\subsection{Example 1 (Stabilized higher-order space-time convergence studies)}
\label{sec:5:1}

This first example is an academic test problem with the given solution
\begin{equation}
\label{eq:5:1:cone}
\begin{array}{l@{\,}c@{\,}l}
u(\boldsymbol x, t) &:=&
u_1 \cdot u_2\,,\,\,
\boldsymbol x = (x_1, x_2)^\top \in \mathbb{R}^2 \text{ and }
t \in \mathbb{R}\,,\\[.5ex]
u_1(\boldsymbol x, t) &:=&
  (1 + a \cdot ( (x_1 - m_1(t))^2 + (x_2 - m_2(t))^2  ) )^{-1}\,,\\[.5ex]
u_2(t) &:=& \nu_1(t) \cdot s \cdot \arctan( \nu_2(t) )\,,
\end{array}
\end{equation}
with $m_1(t) := \frac{1}{2} + \frac{1}{4} \cos(2 \pi t)$ and
$m_2(t) := \frac{1}{2} + \frac{1}{4} \sin(2 \pi t)$, and,
$\nu_1(\hat t) := -1$,
$\nu_2(\hat t) := 5 \pi \cdot (4 \hat t - 1)$,
for $\hat t \in [0, 0.5)$ and
$\nu_1(\hat t) := 1$,
$\nu_2(\hat t) := 5 \pi \cdot (4 (\hat t-0.5) - 1)$,
for $\hat t \in [0.5, 1)$, $\hat t = t - k$,
$k \in \mathbb{N}_0$, and,
scalars $a = 50$ and $s=-\frac{1}{3}$.
The (analytic) solution \eqref{eq:5:1:cone} mimics a counterclockwise rotating
cone which additionally changes its height and orientation over the period
$T=1$. Precisely, the orientation of the cone switches from negative to positive
while passing $t=0.25$ and from positive to negative while passing $t=0.75$.
Exemplary solution profiles at $t=0$, $t=0.33$, $t=0.50$ and $t=0.85$
are illustrated in Fig.~\ref{fig:1:cone:profiles}.

\begin{figure}[ht]
\centering

\includegraphics[width=.24\linewidth]{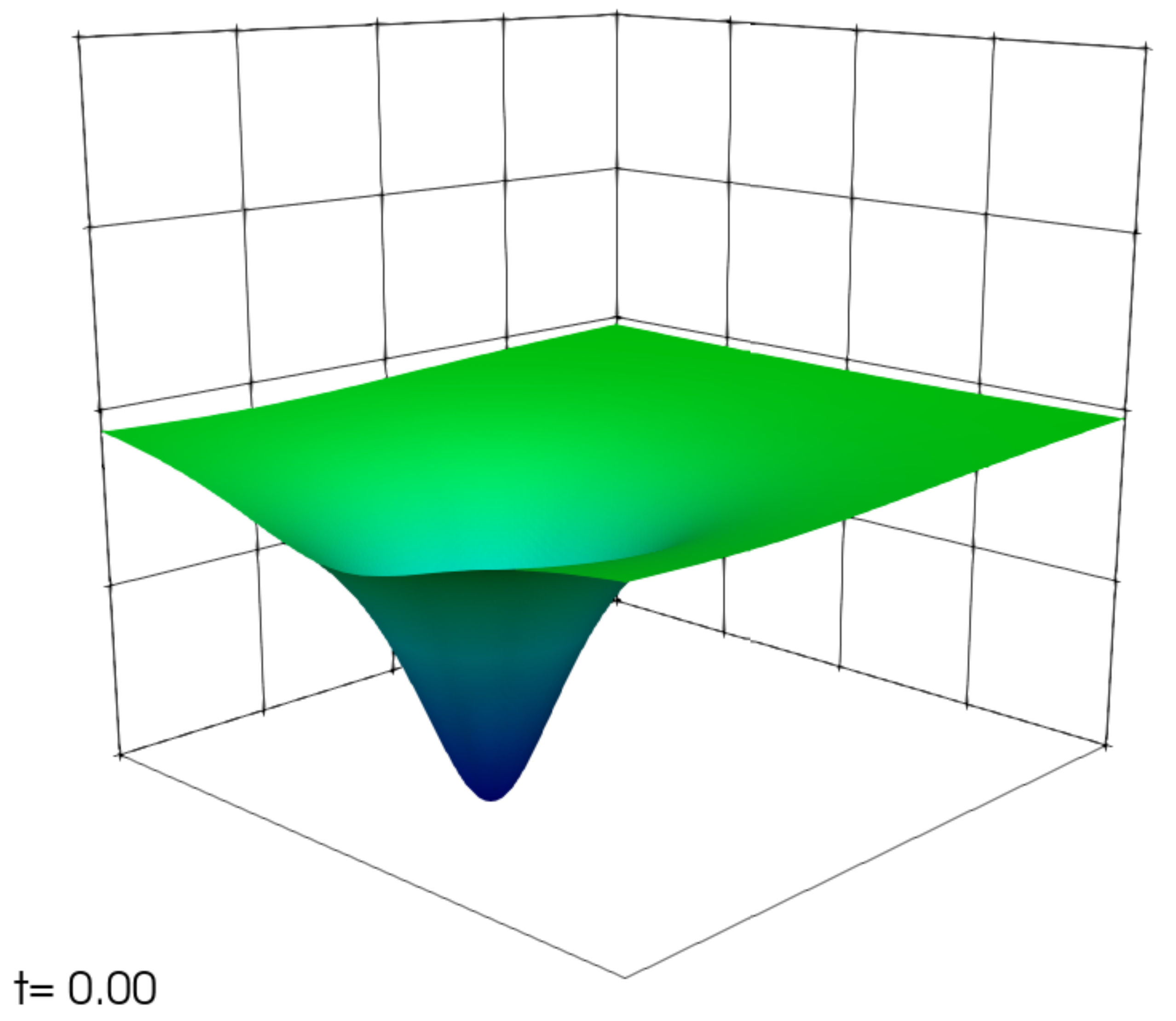}
\includegraphics[width=.24\linewidth]{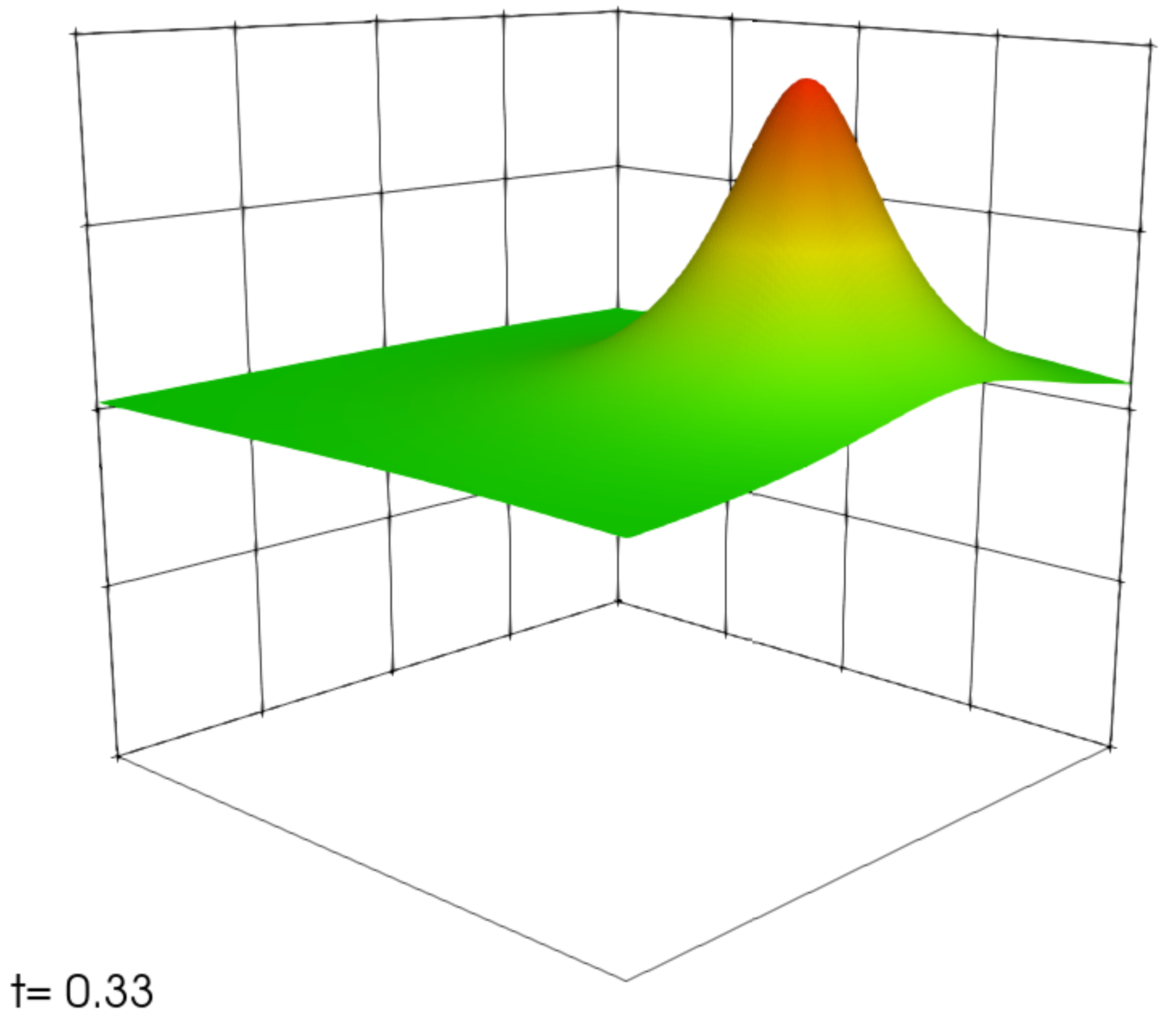}
\includegraphics[width=.24\linewidth]{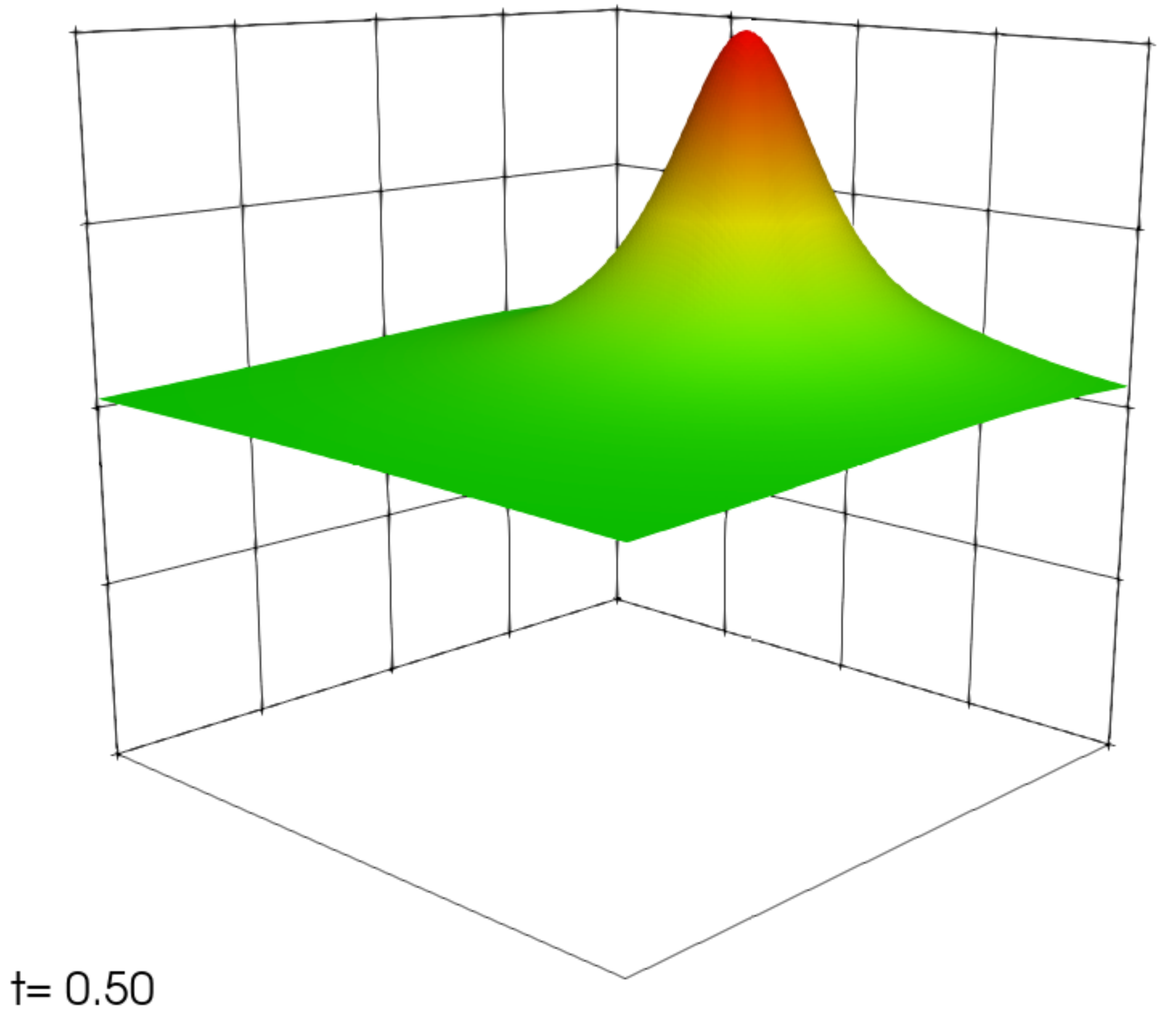}
\includegraphics[width=.24\linewidth]{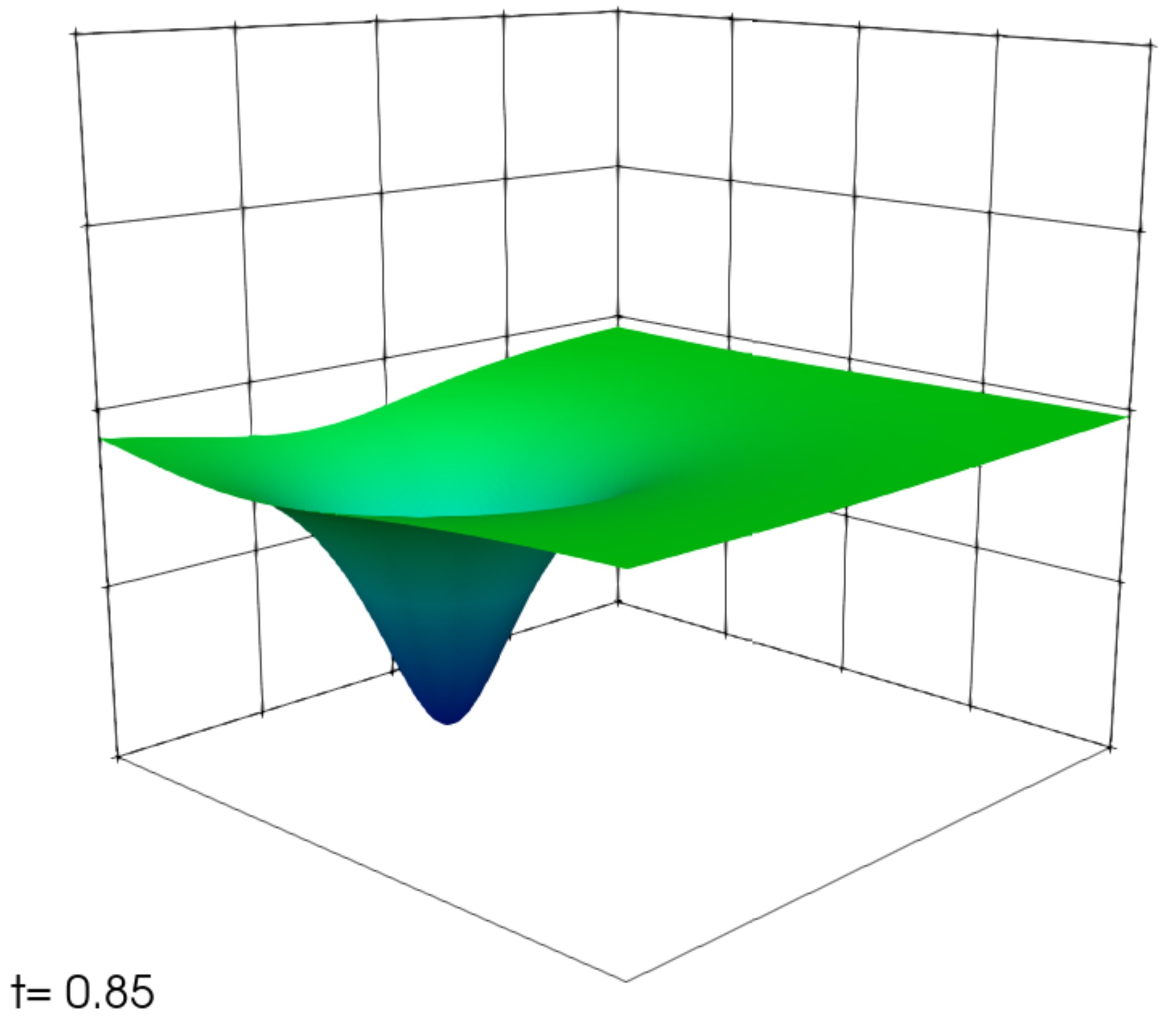}

\caption{Solution profiles $u_{\tau h}$ at $t=0$, $t=0.33$, $t=0.50$ and $t=0.85$
for Sec.~\ref{sec:5:1}.}
\label{fig:1:cone:profiles}
\end{figure}

The right-hand side forcing term $\transportforce$, the inhomogeneous Dirichlet
boundary condition and the inhomogeneous initial condition are calculated from
the given analytic solution \eqref{eq:5:1:cone} and
Eqs. \eqref{eq:transport_problem_0} (a)-(c).
Our target quantity is chosen to control the global $L^2(L^2)$-error of $e$,
$e = \concentration - \concentration_{\tau h}$, in space and time, given by
\begin{equation}
\label{eq:5:2:L2Goal}
J(\varphi)= \frac{1}{\|e\|_{(0,T)\times\Omega}}
\displaystyle\int_I(\varphi,e)\mathrm{d}t\,,
\quad \mathrm{with} \;\; \|\cdot\|_{(0,T)\times\Omega}
=\left(\int_I(\cdot,\cdot)\;\mathrm{d}t\right)^{\frac{1}{2}}\,.
\end{equation}
The tuning parameters of the goal-oriented adaptive Algorithm given in
Sec.~\ref{sec:4:practical_aspects} are chosen here in a way to balance
automatically the potential misfit of the spatial and temporal errors as
\begin{displaymath}
\theta_h^\textnormal{top} = 0.5 \cdot
\left| \frac{\eta_h}{\eta_h + \eta_\tau} \right|\,,\quad
\theta_h^\textnormal{bottom} = 0\quad
\textnormal{and}\quad
\theta_\tau^\textnormal{top} = 0.5 \cdot
\left| \frac{\eta_\tau}{\eta_h + \eta_\tau} \right|\,.
\end{displaymath}

\begin{table}[ht]
\centering

\resizebox{0.90\linewidth}{!}{%
\begin{tabular}{c rrr | cc}
\hline
\hline
$\ell$ & $N$ & $N_K$ & $N_{\text{DoF}}^{\text{tot}}$ &
$||u-u_{\tau h}^{1,1}||$ & EOC\\
\hline
1 & 4   & 4     & 72       & 8.7470e-02 & ---\\
2 & 8   & 16    & 400      & 2.7935e-02 & 1.65\\
3 & 16  & 64    & 2592     & 9.0984e-03 & 1.62\\
4 & 32  & 256   & 18496    & 3.0196e-03 & 1.59\\
5 & 64  & 1024  & 139392   & 7.6942e-04 & 1.97\\
6 & 128 & 4096  & 1081600  & 1.9290e-04 & 2.00\\
7 & 256 & 16384 & 8520192  & 4.9303e-05 & 1.97\\
8 & 512 & 65536 & 67634176 & 1.2490e-05 & 1.98\\
\hline
\end{tabular}
~~~~
\begin{tabular}{c rrr | cc}
\hline
\hline
$\ell$ & $N$ & $N_K$ & $N_{\text{DoF}}^{\text{tot}}$ &
$||u-u_{\tau h}^{2,2}||$ & EOC\\
\hline
1 & 5   & 4     & 375      & 5.3652e-02 & ---\\
2 & 10  & 16    & 2430     & 1.3714e-02 & 1.97\\
3 & 20  & 64    & 17340    & 1.5497e-03 & 3.15\\
4 & 40  & 256   & 130680   & 1.9911e-04 & 2.96\\
5 & 80  & 1024  & 1014000  & 3.5649e-05 & 2.48\\
6 & 160 & 4096  & 7987680  & 5.3813e-06 & 2.73\\
7 & 320 & 16384 & 63407040 & 6.8615e-07 & 2.97\\
~ & \\
\hline
\end{tabular}
}

\caption{Global convergence for $u_{\tau h}^{1,1}$ in a cG(1)-dG(1)
and $u_{\tau h}^{2,2}$ in a cG(2)-dG(2) primal approximation for a
convection-diffusion transport problem with $\varepsilon=1$ and $\delta_0=0$
for Sec.~\ref{sec:5:1}.
$\ell$ denotes the refinement level, $N$ the total cells in time,
$N_K$ the cells in space on a slab,
$N_{\text{DoF}}^{\text{tot}}$ the total space-time degrees of freedom,
$||\cdot||$ the global $L^2(L^2)$-norm error and
EOC the experimental order of convergence.
}
\label{tab:1:cone:global}
\end{table}

\begin{table}[ht]
\centering

\resizebox{0.90\linewidth}{!}{%
\begin{tabular}{c rrr | cc}
\hline
\hline
$\ell$ & $N$ & $N_K$ & $N_{\text{DoF}}^{\text{tot}}$ &
$||u-u_{\tau h}^{1,1}||$ & EOC\\
\hline
1 &   4 &     4 &       72 & 5.8984e-02 & ---\\
2 &   8 &    16 &      400 & 4.0001e-02 & 0.56\\
3 &  16 &    64 &     2592 & 1.5534e-02 & 1.36\\
4 &  32 &   256 &    18496 & 6.0496e-03 & 1.36\\
5 &  64 &  1024 &   139392 & 2.2615e-03 & 1.42\\
6 & 128 &  4096 &  1081600 & 1.0633e-03 & 1.09\\
7 & 256 & 16384 &  8520192 & 5.2811e-04 & 1.01\\
8 & 512 & 65536 & 67634176 & 2.6493e-04 & 1.00\\
\hline
\end{tabular}
~~~~
\begin{tabular}{c rrr | cc}
\hline
\hline
$\ell$ & $N$ & $N_K$ & $N_{\text{DoF}}^{\text{tot}}$ &
$||u-u_{\tau h}^{2,2}||$ & EOC\\
\hline
1 &   4 &     4 &      300 & 4.3801e-02 & ---\\
2 &   8 &    16 &     1944 & 1.7138e-02 & 1.35\\
3 &  16 &    64 &    13872 & 7.9707e-03 & 1.10\\
4 &  32 &   256 &   104544 & 3.4451e-03 & 1.21\\
5 &  64 &  1024 &   811200 & 1.7150e-03 & 1.01\\
6 & 128 &  4096 &  6390144 & 8.6229e-04 & 0.99\\
7 & 256 & 16384 & 50725632 & 4.3314e-04 & 0.99\\
~ & \\
\hline
\end{tabular}
}

\caption{Global convergence for $u_{\tau h}^{1,1}$ in a cG(1)-dG(1)
and $u_{\tau h}^{2,2}$ in a cG(2)-dG(2) primal approximation for a
convection-dominated transport problem with $\varepsilon=10^{-6}$ and
$\delta_0=10^{-1}$ for Sec.~\ref{sec:5:1}.
$\ell$ denotes the refinement level, $N$ the total cells in time,
$N_K$ the cells in space on a slab,
$N_{\text{DoF}}^{\text{tot}}$ the total space-time degrees of freedom,
$||\cdot||$ the global $L^2(L^2)$-norm error and
EOC the experimental order of convergence.
}
\label{tab:2:cone:global}
\end{table}

\begin{figure}[h]
\centering

\begin{tikzpicture}
\begin{footnotesize}
\begin{axis}[%
width=4.50in,
height=1.70in,
scale only axis,
xlabel={$N_{\text{DoF}}^{\text{tot}}$: accumulated total primal space-time degrees of freedom},
ylabel={$|| u - u_{\tau h} ||$},
xmode=log,
xmin=3e1,
xmax=67634176,
xminorticks=true,
ymode=log,
ymin=1e-4,
ymax=1e-1,
]


\addplot[
color=black,
dotted,
line width=1.0pt
]
table[row sep=crcr]{
72       8.7470e-02\\
400      2.7935e-02\\
2592     9.0984e-03\\
18496    3.0196e-03\\
139392   7.6942e-04\\
1081600  1.9290e-04\\
8520192  4.9303e-05\\
67634176 1.2490e-05\\
};

\addlegendentry{glob. ref. $\concentration_{\tau h}^{1,1}$};

\addplot[
color=black,
solid,
line width=1.0pt
]
table[row sep=crcr]{
375      5.3652e-02\\
2430     1.3714e-02\\
17340    1.5497e-03\\
130680   1.9911e-04\\
1014000  3.5649e-05\\
7987680  5.3813e-06\\
63407040 6.8615e-07\\
};

\addlegendentry{glob. ref. $\concentration_{\tau h}^{2,2}$};

\addplot[
color=red,
densely dashdotted,
line width=1.0pt
]
table[row sep=crcr]{
36    7.1588e-02\\
125   3.6641e-02\\
246   2.3021e-02\\
682   2.0517e-02\\
1181  1.6786e-02\\
1959  1.5438e-02\\
3085  9.3565e-03\\
4878  5.9652e-03\\
7610  3.9673e-03\\
11786 3.2686e-03\\
};

\addlegendentry{adap. $\{\concentration_{\tau h}^{1,0}, \dualz_{\tau h}^{2,1}\}$};

\addplot[
color=green,
densely dashed,
line width=1.0pt
]
table[row sep=crcr]{
72    8.7470e-02\\
250   3.1113e-02\\
492   2.1156e-02\\
1468  1.4856e-02\\
2666  8.0165e-03\\
4456  3.9318e-03\\
6960  3.1346e-03\\
10610 1.8780e-03\\
16522 1.4014e-03\\
26252 1.0214e-03\\
};

\addlegendentry{adap. $\{\concentration_{\tau h}^{1,1}, \dualz_{\tau h}^{2,2}\}$};

\addplot[
color=blue,
densely dotted,
line width=1.0pt
]
table[row sep=crcr]{
300    5.7763e-02\\
1215   1.4581e-02\\
2538   1.0721e-02\\
5112   5.4041e-03\\
11325  2.1878e-03\\
18489  9.1402e-04\\
29895  4.4147e-04\\
43026  2.6759e-04\\
66270  1.6078e-04\\
101178 1.1594e-04\\
};

\addlegendentry{adap. $\{\concentration_{\tau h}^{2,2}, \dualz_{\tau h}^{3,3}\}$};

\addplot[
color=black,
solid,
line width=.1pt
]
table[row sep=crcr]{
1 3.2686e-03\\
11786 3.2686e-03\\
};

\addplot[
color=black,
solid,
line width=.1pt
]
table[row sep=crcr]{
12000 1e-01\\
12000 1e-08\\
};

\addplot[
color=black,
solid,
line width=.1pt
]
table[row sep=crcr]{
26000 1e-01\\
26000 1e-08\\
};

\end{axis}

\end{footnotesize}
\end{tikzpicture}

\caption{$L^2(L^2)$-error reduction in the convection-diffusion transport case
with $\varepsilon = 1$ without stabilization $\delta_0=0$ for Sec.~\ref{sec:5:1}.
The solution approximations are
$\concentration_{\tau h}^{1,0}$ in cG(1)-dG(0),
$\concentration_{\tau h}^{1,1}$ in cG(1)-dG(1),
$\concentration_{\tau h}^{2,2}$ in cG(2)-dG(2)
and the dual solution approximations are
$\dualz_{\tau h}^{2,1}$ in cG(2)-dG(1),
$\dualz_{\tau h}^{2,2}$ in cG(2)-dG(2),
$\dualz_{\tau h}^{3,3}$ in cG(3)-dG(3).}
\label{fig:2:cone:plot}
\end{figure}
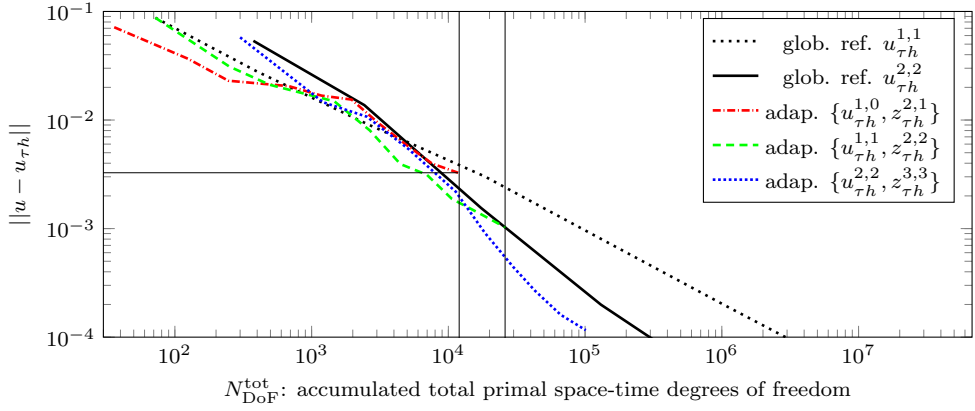

\begin{figure}[h]
\centering

\begin{tikzpicture}
\begin{footnotesize}
\begin{axis}[%
width=4.50in,
height=1.70in,
scale only axis,
xlabel={$N_{\text{DoF}}^{\text{tot}}$: accumulated total primal space-time degrees of freedom},
ylabel={$|| u - u_{\tau h} ||$},
xmode=log,
xmin=3e1,
xmax=67634176,
xminorticks=true,
ymode=log,
ymin=1e-4,
ymax=1e-1,
legend style={legend pos=south west},
]


\addplot[
color=black,
dotted,
line width=1.0pt
]
table[row sep=crcr]{
72       5.8984e-02\\
400      4.0001e-02\\
2592     1.5534e-02\\
18496    6.0496e-03\\
139392   2.2615e-03\\
1081600  1.0633e-03\\
8520192  5.2811e-04\\
67634176 2.6493e-04\\
};

\addlegendentry{glob. ref. $\concentration_{\tau h}^{1,1}$};

\addplot[
color=black,
solid,
line width=1.0pt
]
table[row sep=crcr]{
300      4.3801e-02\\
1944     1.7138e-02\\
13872    7.9707e-03\\
104544   3.4451e-03\\
811200   1.7150e-03\\
6390144  8.6229e-04\\
50725632 4.3314e-04\\
};

\addlegendentry{glob. ref. $\concentration_{\tau h}^{2,2}$};

\addplot[
color=red,
densely dashdotted,
line width=1.0pt
]
table[row sep=crcr]{
36    9.1527e-02\\
125   6.6916e-02\\
330   6.2426e-02\\
804   5.5849e-02\\
1265  4.3011e-02\\
2037  3.0684e-02\\
3283  2.1294e-02\\
5262  1.5710e-02\\
8357  1.2567e-02\\
13121 9.9855e-03\\
21851 8.2167e-03\\
35812 6.9471e-03\\
56496 5.4105e-03\\
92001 4.5907e-03\\
149673 3.7872e-03\\
245837 3.0636e-03\\
388841 2.5930e-03\\
611872 2.1692e-03\\
975317 1.7862e-03\\
1571582 1.5070e-03\\
2490706 1.2315e-03\\
3847118 1.0341e-03\\
5856334 8.7489e-04\\
9306237 7.5828e-04\\
14602984 6.1936e-04\\
22227774 5.3874e-04\\
33920429 4.6419e-04\\
52807201 4.0509e-04\\
};

\addlegendentry{adap. $\{\concentration_{\tau h}^{1,0}, \dualz_{\tau h}^{2,1}\}$};

\addplot[
color=green,
densely dashed,
line width=1.0pt
]
table[row sep=crcr]{
72    5.8984e-02\\
250   5.3935e-02\\
660   4.2387e-02\\
1634  2.4328e-02\\
2668  1.5031e-02\\
4316  8.9709e-03\\
7032  5.9470e-03\\
10668 4.2404e-03\\
17270 3.2138e-03\\
41550 2.9555e-03\\
70600 2.8066e-03\\
116914 2.2909e-03\\
200612 2.1275e-03\\
331022 1.8869e-03\\
540428 1.5635e-03\\
901690 1.4340e-03\\
1444372 1.2802e-03\\
2252002 1.0818e-03\\
3958710 9.7734e-04\\
6700404 8.5644e-04\\
10351646 7.4666e-04\\
16647900 6.3712e-04\\
26042484 5.6586e-04\\
39272854 4.8490e-04\\
63216858 4.0937e-04\\
};

\addlegendentry{adap. $\{\concentration_{\tau h}^{1,1}, \dualz_{\tau h}^{2,2}\}$};

\addplot[
color=blue,
densely dotted,
line width=1.0pt
]
table[row sep=crcr]{
300    4.3801e-02\\
1215   2.8964e-02\\
2538   2.6314e-02\\
4719   1.6058e-02\\
8175   9.9382e-03\\
14442  7.1355e-03\\
31548  5.2461e-03\\
66042  4.4188e-03\\
105825 3.9399e-03\\
164886 3.2102e-03\\
243975 2.8520e-03\\
357708 2.6024e-03\\
489993 2.3287e-03\\
717525 2.2987e-03\\
991206 2.2536e-03\\
1342650 2.2246e-03\\
1781844 2.0076e-03\\
2409129 1.8148e-03\\
3234252 1.6807e-03\\
4296147 1.5911e-03\\
5806080 1.5730e-03\\
7777026 1.5502e-03\\
11325789 1.4811e-03\\
14650563 1.4064e-03\\
20420922 1.3293e-03\\
30198813 1.2394e-03\\
40048821 1.1975e-03\\
};

\addlegendentry{adap. $\{\concentration_{\tau h}^{2,2}, \dualz_{\tau h}^{3,3}\}$};

\addplot[
color=black,
solid,
line width=.1pt
]
table[row sep=crcr]{
1 9.9855e-03\\
13121 9.9855e-03\\
};

\addplot[
color=black,
solid,
line width=.1pt
]
table[row sep=crcr]{
13000 1e-01\\
13000 1e-08\\
};

\addplot[
color=black,
solid,
line width=.1pt
]
table[row sep=crcr]{
42000 1e-01\\
42000 1e-08\\
};

\end{axis}

\end{footnotesize}
\end{tikzpicture}

\caption{$L^2(L^2)$-error reduction in the stabilized convection-dominated transport case
with $\varepsilon = 10^{-6}$ and $\delta_0=10^{-1}$ for Sec.~\ref{sec:5:1}.
The solution approximations are
$\concentration_{\tau h}^{1,0}$ in cG(1)-dG(0),
$\concentration_{\tau h}^{1,1}$ in cG(1)-dG(1),
$\concentration_{\tau h}^{2,2}$ in cG(2)-dG(2)
and the dual solution approximations are
$\dualz_{\tau h}^{2,1}$ in cG(2)-dG(1),
$\dualz_{\tau h}^{2,2}$ in cG(2)-dG(2),
$\dualz_{\tau h}^{3,3}$ in cG(3)-dG(3).}
\label{fig:3:cone:plot}
\end{figure}
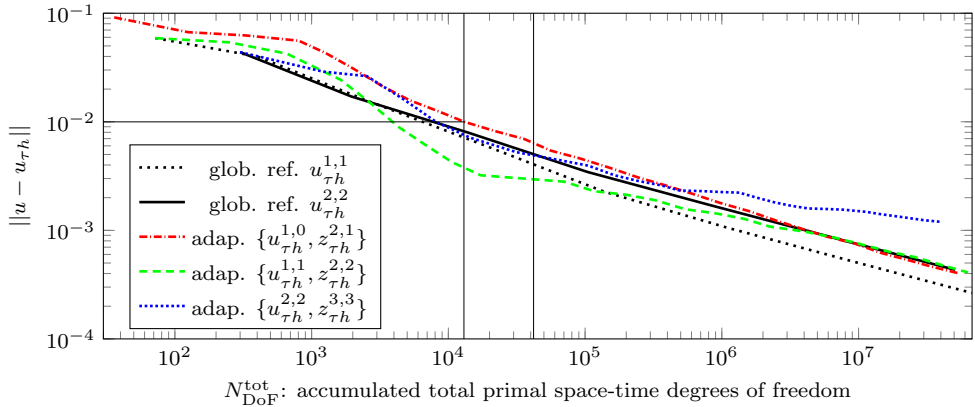

The convergence behavior, computationally efficiency and stability
for higher-order space-time discretizations with and without stabilization
is studied in the following of this numerical experiment. Additionally,
we compare global against goal-oriented space-time adaptivity.
We use here a constant convection field $\convection = (2,3)^\top$
for a non-stabilized convection-diffusion with $\varepsilon=1$ and $\delta_0=0$
transport and a stabilized convection-dominated transport with $\varepsilon=10^{-6}$
and $\delta_0=10^{-1}$
together with the constant reaction coefficient $\alpha=1$ and
the density $\density=1$.
The local SUPG stabilization coefficient is here
$\delta_K = \delta_0 \cdot h_K$ where $h_K$ denotes the cell diameter of the
spatial mesh cell $K$.

Initially, we study the global space-time refinement behavior for the
test case with $\varepsilon=1$ and vanishing stabilization
to show the correctness of the higher-order implementation. Therefore, the solution
$\concentration$ is approximated with the higher-order in time method
cG(1)-dG(1) and with the space-time higher-order method cG(2)-dG(2).
Due to the same polynomial orders of the spatial and temporal discretizations,
we expect experimental orders of convergence
(EOC $:= -\log_2(|| e ||_\ell / || e ||_{\ell-1})$) of
$\textnormal{EOC}^{1,1} \approx 2$ for the cG(1)-dG(1) method and
$\textnormal{EOC}^{2,2} \approx 3$ for the cG(2)-dG(2) method
for a global refinement convergence test.
The results are given by Tab.~\ref{tab:1:cone:global} and nicely confirm
our expected results for the cG(1)-dG(1) method and roughly confirm the expected
results for the cG(2)-dG(2) method. The EOC of the cG(2)-dG(2) is not perfect
since the initial space-time errors are not well balanced for this example.

The error reduction of the global space-time refinement results
for the convection-dominated case with $\varepsilon=10^{-6}$ and
stabilization with $\delta_0=10^{-1}$
are given by Tab.~\ref{tab:2:cone:global}.
Both methods, that are the cG(1)-dG(1) and the cG(2)-dG(2) approximations
of the primal solution $\concentration_{\tau h}$, are limited in their
experimental order of convergence of approximately $1$.
This is not a suprising result since the regularity of the convection-dominated
test case is typically of low order.
Therefore, we expect further the lower-order or maybe the lowest-order
goal-oriented adaptivity methods to perform better then higher-order methods for
the convection-dominated test case.

Secondly, we study the goal-oriented space-time adaptivity behavior.
Precisely, we compare the solution and dual solution approximation pairings
$\{\concentration_{\tau h}, \dualz_{\tau h}\}$:
cG(1)-dG(0)/cG(2)-dG(1), cG(1)-dG(1)/cG(2)-dG(2) and cG(2)-dG(2)/cG(3)-dG(3).
The lowest-order results can be compared with the results of our preceding
published work \cite{BruchhaeuserBSB18},
while remarking that a cG(1)-dG(0)/cG(2)-cG(1) discretization was used there
combined with a different choice for the tuning parameters.
The results are given by Fig.~\ref{fig:2:cone:plot} and
by Tab.~\ref{tab:3:cone:ada}-\ref{tab:5:cone:ada}.
Here, the adaptive method of highest order, i.e. the approximation by
cG(2)-dG(2)/cG(3)-dG(3), outperforms all other methods.

Finally, we study the goal-oriented space-time adaptivity behavior for the
convection-dominated case with $\varepsilon=10^{-6}$.
Precisely, we compare the solution and dual solution approximation pairings
$\{\concentration_{\tau h}, \dualz_{\tau h}\}$:
cG(1)-dG(0)/cG(2)-dG(1), cG(1)-dG(1)/cG(2)-dG(2) and cG(2)-dG(2)/cG(3)-dG(3).
The lowest-order results can be compared with the results of our preceding
published work \cite{BruchhaeuserBSB18},
while remarking that a cG(1)-dG(0)/cG(2)-cG(1) discretiatzion was used there
combined with a different choice for the tuning parameters.
The results are given by Fig.~\ref{fig:3:cone:plot} and
by Tab.~\ref{tab:6:cone:ada}-\ref{tab:8:cone:ada}.
Here, the low order method, but not lowest order, i.e. the approximation by
cG(1)-dG(1)/cG(2)-dG(2), outperforms all other methods.

\clearpage

\begin{table}[h!]
\centering

\resizebox{0.90\linewidth}{!}{%
\begin{tabular}{c | rrr | cc | ccc | c}
\hline
\hline
$\ell$ & $N$ & $N_K^{\text{max}}$ & $N_{\text{DoF}}^{\text{tot}}$ &
$\|e^{1,0,2,1}\|$ & EOC &
${\eta}_h$ & ${\eta}_\tau$ & ${\eta}_{\tau h}$ &
$\mathcal{I}_{\textnormal{eff}}$
\\
\hline
1  & 4  & 4   & 36    & 7.1588e-02 & ---  & \phantom{-}5.6668e-02 & \phantom{-}1.9391e-02 & \phantom{-}7.6060e-02 & 1.062\\
2  & 5  & 16  & 125   & 3.6641e-02 & 0.97 & \phantom{-}7.4575e-03 & \phantom{-}2.8043e-02 & \phantom{-}3.5501e-02 & 0.969\\
3  & 6  & 28  & 246   & 2.3021e-02 & 0.67 & \phantom{-}5.4712e-03 & \phantom{-}3.2327e-02 & \phantom{-}3.7798e-02 & 1.642\\
4  & 8  & 76  & 682   & 2.0517e-02 & 0.17 & \phantom{-}2.1734e-03 & \phantom{-}2.0763e-02 & \phantom{-}2.2936e-02 & 1.118\\
5  & 11 & 88  & 1181  & 1.6786e-02 & 0.29 & \phantom{-}2.3597e-03 & \phantom{-}2.1533e-02 & \phantom{-}2.3892e-02 & 1.423\\
6  & 15 & 124 & 1959  & 1.5438e-02 & 0.12 & \phantom{-}1.4977e-03 & \phantom{-}1.4946e-02 & \phantom{-}1.6443e-02 & 1.065\\
7  & 21 & 136 & 3085  & 9.3565e-03 & 0.72 & \phantom{-}2.3671e-03 & \phantom{-}1.5271e-02 & \phantom{-}1.7638e-02 & 1.885\\
8  & 30 & 160 & 4878  & 5.9652e-03 & 0.65 & \phantom{-}2.5744e-03 & \phantom{-}1.2993e-02 & \phantom{-}1.5567e-02 & 2.610\\
9  & 42 & 172 & 7610  & 3.9673e-03 & 0.59 & \phantom{-}2.1052e-03 & \phantom{-}7.8182e-03 & \phantom{-}9.9234e-03 & 2.501\\
10 & 58 & 208 & {11786} & {3.2686e-03} & 0.28 & \phantom{-}2.4333e-03 & \phantom{-}6.3257e-03 & \phantom{-}8.7590e-03 & 2.680\\
\hline
   &    &     &       & avg. EOC = & 0.50 &\\
\hline
\end{tabular}
}

\vskip-1ex
\caption{$L^2(L^2)$-error reduction in the convection-diffusion transport case
with $\varepsilon = 1$ without stabilization $\delta_0=0$ for Sec.~\ref{sec:5:1}.
$e^{1,0,2,1}$ corresponds to
the adaptive solution approximation
$\concentration_{\tau h}^{1,0}$ in cG(1)-dG(0)
and dual solution approximation
$\dualz_{\tau h}^{2,1}$ in cG(2)-dG(1).}

\label{tab:3:cone:ada}
\end{table}

\begin{table}[h!]
\centering

\resizebox{0.90\linewidth}{!}{%
\begin{tabular}{c | rrr | cc | ccc | c}
\hline
\hline
$\ell$ & $N$ & $N_K^{\text{max}}$ & $N_{\text{DoF}}^{\text{tot}}$ &
$\|e^{1,1,2,2}\|$ & EOC &
${\eta}_h$ & ${\eta}_\tau$ & ${\eta}_{\tau h}$ &
$\mathcal{I}_{\textnormal{eff}}$
\\
\hline
1  & 4  & 4   & 72    & 8.7470e-02 & ---  & \phantom{-}4.7959e-02 &           -4.5912e-03 & \phantom{-}4.3367e-02 & 0.496\\
2  & 5  & 16  & 250   & 3.1113e-02 & 1.49 & \phantom{-}4.1953e-03 & \phantom{-}7.3543e-03 & \phantom{-}1.1550e-02 & 0.371\\
3  & 6  & 28  & 492   & 2.1156e-02 & 0.56 & \phantom{-}3.0706e-03 & \phantom{-}9.2354e-03 & \phantom{-}1.2306e-02 & 0.582\\
4  & 8  & 76  & 1468  & 1.4856e-02 & 0.51 & \phantom{-}2.1015e-03 & \phantom{-}6.4001e-03 & \phantom{-}8.5016e-03 & 0.572\\
5  & 11 & 124 & 2666  & 8.0165e-03 & 0.89 & \phantom{-}2.6080e-03 & \phantom{-}6.5137e-03 & \phantom{-}9.1217e-03 & 1.138\\
6  & 14 & 160 & 4456  & 3.9318e-03 & 1.03 & \phantom{-}3.1048e-03 & \phantom{-}7.1463e-03 & \phantom{-}1.0251e-02 & 2.607\\
7  & 18 & 196 & {6960}  & {3.1346e-03} & 0.33 & \phantom{-}2.2002e-03 & \phantom{-}4.3348e-03 & \phantom{-}6.5350e-03 & 2.085\\
8  & 23 & 232 & 10610 & 1.8780e-03 & 0.74 & \phantom{-}5.4403e-04 & \phantom{-}1.7686e-03 & \phantom{-}2.3126e-03 & 1.231\\
9  & 31 & 280 & 16522 & 1.4014e-03 & 0.42 & \phantom{-}3.5661e-04 & \phantom{-}9.3709e-04 & \phantom{-}1.2937e-03 & 0.923\\
10 & 42 & 340 & 26252 & 1.0214e-03 & 0.46 & \phantom{-}5.1205e-05 & \phantom{-}2.8010e-04 & \phantom{-}3.3130e-04 & 0.324\\
\hline
   &    &     &       & avg. EOC = & 0.71 &\\
\hline
\end{tabular}
}

\vskip-1ex
\caption{$L^2(L^2)$-error reduction in the convection-diffusion transport case
with $\varepsilon = 1$ without stabilization $\delta_0=0$ for Sec.~\ref{sec:5:1}.
$e^{1,1,2,2}$ corresponds to
the adaptive solution approximation
$\concentration_{\tau h}^{1,1}$ in cG(1)-dG(1)
and dual solution approximation
$\dualz_{\tau h}^{2,2}$ in cG(2)-dG(2).}
\label{tab:4:cone:ada}
\end{table}

\begin{table}[h!]
\centering

\resizebox{0.90\linewidth}{!}{%
\begin{tabular}{c | rrr | cc | ccc | c}
\hline
\hline
$\ell$ & $N$ & $N_K^{\text{max}}$ & $N_{\text{DoF}}^{\text{tot}}$ &
$\|e^{2,2,3,3}\|$ & EOC &
${\eta}_h$ & ${\eta}_\tau$ & ${\eta}_{\tau h}$ &
$\mathcal{I}_{\textnormal{eff}}$
\\
\hline
1  & 4  & 4   & 300    & 5.7763e-02 & ---  & \phantom{-}4.1837e-03 & \phantom{-}3.6501e-03 & \phantom{-}7.8339e-03 & 0.136\\
2  & 5  & 16  & 1215   & 1.4581e-02 & 1.99 & \phantom{-}3.5634e-03 & \phantom{-}4.5982e-03 & \phantom{-}8.1616e-03 & 0.560\\
3  & 6  & 28  & 2538   & 1.0721e-02 & 0.44 & \phantom{-}5.9011e-04 & \phantom{-}4.9905e-03 & \phantom{-}5.5806e-03 & 0.521\\
4  & 8  & 76  & {5112}   & {5.4041e-03} & 0.99 & \phantom{-}3.0895e-04 & \phantom{-}3.3977e-03 & \phantom{-}3.7067e-03 & 0.686\\
5  & 11 & 88  & {11325}  & {2.1878e-03} & 1.30 & \phantom{-}6.4201e-04 & \phantom{-}2.7536e-03 & \phantom{-}3.3956e-03 & 1.552\\
6  & 15 & 100 & 18489  & 9.1402e-04 & 1.26 & \phantom{-}4.6743e-04 & \phantom{-}1.8813e-03 & \phantom{-}2.3487e-03 & 2.570\\
7  & 21 & 124 & 29895  & 4.4147e-04 & 1.05 & \phantom{-}3.3121e-04 & \phantom{-}1.9212e-04 & \phantom{-}5.2333e-04 & 1.185\\
8  & 24 & 184 & 43026  & 2.6759e-04 & 0.72 & \phantom{-}1.5650e-04 & \phantom{-}1.7675e-04 & \phantom{-}3.3325e-04 & 1.245\\
9  & 30 & 208 & 66270  & 1.6078e-04 & 0.73 & \phantom{-}7.4964e-05 & \phantom{-}1.5182e-04 & \phantom{-}2.2678e-04 & 1.410\\
10 & 40 & 256 & 101178 & 1.1594e-04 & 0.47 & \phantom{-}6.7057e-05 & \phantom{-}1.9555e-05 & \phantom{-}8.6611e-05 & 0.747\\
\hline
   &    &     &       & avg. EOC = & 0.99 &\\
\hline
\end{tabular}
}

\vskip-1ex
\caption{$L^2(L^2)$-error reduction in the convection-diffusion transport case
with $\varepsilon = 1$ without stabilization $\delta_0=0$ for Sec.~\ref{sec:5:1}.
$e^{2,2,3,3}$ corresponds to
the adaptive solution approximation
$\concentration_{\tau h}^{2,2}$ in cG(2)-dG(2)
and dual solution approximation
$\dualz_{\tau h}^{3,3}$ in cG(3)-dG(3).}
\label{tab:5:cone:ada}
\end{table}

\begin{table}[h!]
\centering

\resizebox{0.90\linewidth}{!}{%
\begin{tabular}{c | rrr | cc | ccc | c}
\hline
\hline
$\ell$ & $N$ & $N_K^{\text{max}}$ & $N_{\text{DoF}}^{\text{tot}}$ &
$\|e^{1,0,2,1}\|$ & EOC &
${\eta}_h$ & ${\eta}_\tau$ & ${\eta}_{\tau h}$ &
$\mathcal{I}_{\textnormal{eff}}$
\\
\hline
1  & 4  & 4   & 36    & 9.1527e-02 & ---  & \phantom{-}3.6697e-02 & \phantom{-}1.5147e-02 & \phantom{-}5.1844e-02 & 0.566\\
2  & 5  & 16  & 125   & 6.6916e-02 & 0.45 & \phantom{-}6.4515e-03 &           -6.0024e-04 & \phantom{-}5.8513e-03 & 0.087\\
3  & 6  & 52  & 330   & 6.2426e-02 & 0.10 & \phantom{-}2.6882e-03 & \phantom{-}5.9916e-03 & \phantom{-}8.6798e-03 & 0.139\\
4  & 8  & 88  & 804   & 5.5849e-02 & 0.16 & \phantom{-}2.1670e-03 & \phantom{-}6.6884e-03 & \phantom{-}8.8554e-03 & 0.159\\
5  & 11 & 100 & 1265  & 4.3011e-02 & 0.38 & \phantom{-}3.2302e-03 & \phantom{-}1.1793e-02 & \phantom{-}1.5023e-02 & 0.349\\
6  & 15 & 124 & 2037  & 3.0684e-02 & 0.49 & \phantom{-}2.3297e-03 & \phantom{-}1.3276e-02 & \phantom{-}1.5605e-02 & 0.509\\
7  & 21 & 160 & 3283  & 2.1294e-02 & 0.53 & \phantom{-}1.8278e-03 & \phantom{-}1.1939e-02 & \phantom{-}1.3767e-02 & 0.647\\
8  & 30 & 184 & 5262  & 1.5710e-02 & 0.44 & \phantom{-}2.7619e-03 & \phantom{-}1.0873e-02 & \phantom{-}1.3635e-02 & 0.868\\
9  & 41 & 220 & 8357  & 1.2567e-02 & 0.32 & \phantom{-}3.3126e-03 & \phantom{-}7.9147e-03 & \phantom{-}1.1227e-02 & 0.893\\
10 & 55 & 280 & {13121} & {9.9855e-03} & 0.33 & \phantom{-}5.5998e-03 & \phantom{-}6.5796e-03 & \phantom{-}1.2179e-02 & 1.220\\
\hline
   &    &     &       & avg. EOC = & 0.36 &\\
\hline
\end{tabular}
}

\vskip-1ex
\caption{$L^2(L^2)$-error reduction in the stabilized convection-dominated
transport case with $\varepsilon=10^{-6}$ and $\delta_0=10^{-1}$ for Sec.~\ref{sec:5:1}.
$e^{1,0,2,1}$ corresponds to
the adaptive solution approximation
$\concentration_{\tau h}^{1,0}$ in cG(1)-dG(0)
and dual solution approximation
$\dualz_{\tau h}^{2,1}$ in cG(2)-dG(1).}

\label{tab:6:cone:ada}
\end{table}

\begin{table}[h!]
\centering

\resizebox{0.90\linewidth}{!}{%
\begin{tabular}{c | rrr | cc | ccc | c}
\hline
\hline
$\ell$ & $N$ & $N_K^{\text{max}}$ & $N_{\text{DoF}}^{\text{tot}}$ &
$\|e^{1,1,2,2}\|$ & EOC &
${\eta}_h$ & ${\eta}_\tau$ & ${\eta}_{\tau h}$ &
$\mathcal{I}_{\textnormal{eff}}$
\\
\hline
1  & 4  & 4   & 72    & 5.8984e-02 & ---  & \phantom{-}2.6394e-02 & \phantom{-}1.3546e-03 & \phantom{-}2.7749e-02 & 0.470\\
2  & 5  & 16  & 250   & 5.3935e-02 & 0.13 &           -1.4730e-02 & \phantom{-}3.8407e-03 &           -1.0889e-02 & 0.202\\
3  & 6  & 52  & 660   & 4.2387e-02 & 0.35 &           -5.4370e-04 & \phantom{-}4.7724e-03 & \phantom{-}4.2287e-03 & 0.100\\
4  & 9  & 76  & 1634  & 2.4328e-02 & 0.80 & \phantom{-}1.2300e-03 & \phantom{-}4.5411e-03 & \phantom{-}5.7711e-03 & 0.237\\
5  & 12 & 124 & 2668  & 1.5031e-02 & 0.69 & \phantom{-}7.6984e-03 & \phantom{-}4.4589e-03 & \phantom{-}1.2157e-02 & 0.809\\
6  & 14 & 172 & {4316}  & {8.9709e-03} & 0.74 & \phantom{-}2.7195e-03 & \phantom{-}5.1674e-03 & \phantom{-}7.8869e-03 & 0.879\\
7  & 18 & 208 & 7032  & 5.9470e-03 & 0.59 & \phantom{-}3.2797e-03 & \phantom{-}3.1406e-03 & \phantom{-}6.4202e-03 & 1.080\\
8  & 22 & 268 & 10668 & 4.2404e-03 & 0.49 &           -1.1081e-04 & \phantom{-}1.7962e-03 & \phantom{-}1.6854e-03 & 0.397\\
9  & 33 & 292 & 17270 & 3.2138e-03 & 0.40 &           -1.8817e-03 & \phantom{-}7.9462e-04 &           -1.0871e-03 & 0.338\\
10 & 45 & 568 & 41550 & 2.9555e-03 & 0.12 &           -8.4016e-03 & \phantom{-}3.2177e-04 &           -8.0798e-03 & 2.734\\
\hline
   &    &     &       & avg. EOC = & 0.48 &\\
\hline
\end{tabular}
}

\vskip-1ex
\caption{$L^2(L^2)$-error reduction in the stabilized convection-dominated
transport case with $\varepsilon=10^{-6}$ and $\delta_0=10^{-1}$ for Sec.~\ref{sec:5:1}.
$e^{1,1,2,2}$ corresponds to
the adaptive solution approximation
$\concentration_{\tau h}^{1,1}$ in cG(1)-dG(1)
and dual solution approximation
$\dualz_{\tau h}^{2,2}$ in cG(2)-dG(2).}
\label{tab:7:cone:ada}
\end{table}

\begin{table}[h!]
\centering

\resizebox{0.90\linewidth}{!}{%
\begin{tabular}{c | rrr | cc | ccc | c}
\hline
\hline
$\ell$ & $N$ & $N_K^{\text{max}}$ & $N_{\text{DoF}}^{\text{tot}}$ &
$\|e^{2,2,3,3}\|$ & EOC &
${\eta}_h$ & ${\eta}_\tau$ & ${\eta}_{\tau h}$ &
$\mathcal{I}_{\textnormal{eff}}$
\\
\hline
1  & 4  & 4   & 300    & 4.3801e-02 & ---  &           -2.6854e-03 & \phantom{-}1.0697e-03 &           -1.6156e-03 & 0.037\\
2  & 5  & 16  & 1215   & 2.8964e-02 & 0.60 & \phantom{-}2.2071e-03 & \phantom{-}2.3582e-03 & \phantom{-}4.5653e-03 & 0.158\\
3  & 6  & 28  & 2538   & 2.6314e-02 & 0.14 & \phantom{-}1.0415e-03 & \phantom{-}2.0664e-03 & \phantom{-}3.1078e-03 & 0.118\\
4  & 7  & 76  & 4719   & 1.6058e-02 & 0.71 & \phantom{-}6.6330e-04 & \phantom{-}2.2853e-03 & \phantom{-}2.9486e-03 & 0.184\\
5  & 9  & 88  & {8175}   & {9.9382e-03} & 0.69 & \phantom{-}4.8613e-04 & \phantom{-}1.9507e-03 & \phantom{-}2.4369e-03 & 0.245\\
6  & 12 & 124 & 14442  & 7.1355e-03 & 0.48 &           -5.9480e-04 & \phantom{-}1.0807e-03 & \phantom{-}4.8594e-04 & 0.068\\
7  & 18 & 244 & 31548  & 5.2461e-03 & 0.44 &           -1.7721e-03 & \phantom{-}6.0417e-04 &           -1.1680e-03 & 0.223\\
8  & 22 & 376 & 66042  & 4.4188e-03 & 0.25 &           -1.1307e-03 & \phantom{-}1.0326e-04 &           -1.0275e-03 & 0.233\\
9  & 23 & 556 & 105825 & 3.9399e-03 & 0.17 &           -1.3787e-03 &           -8.2448e-05 &           -1.4611e-03 & 0.371\\
10 & 24 & 868 & 164886 & 3.2102e-03 & 0.30 &           -1.0300e-03 &           -1.4457e-04 &           -1.1746e-03 & 0.366\\
\hline
   &    &     &        & avg. EOC = & 0.42 &\\
\hline
\end{tabular}
}

\vskip-1ex
\caption{$L^2(L^2)$-error reduction in the stabilized convection-dominated
transport case with $\varepsilon=10^{-6}$ and $\delta_0=10^{-1}$ for Sec.~\ref{sec:5:1}.
$e^{2,2,3,3}$ corresponds to
the adaptive solution approximation
$\concentration_{\tau h}^{2,2}$ in cG(2)-dG(2)
and dual solution approximation
$\dualz_{\tau h}^{3,3}$ in cG(3)-dG(3).}
\label{tab:8:cone:ada}
\end{table}

\clearpage
\subsection{Example 2 (Transport in a channel)}
\label{sec:5:2}

\begin{figure}
\centering

\includegraphics[width=.44\linewidth]{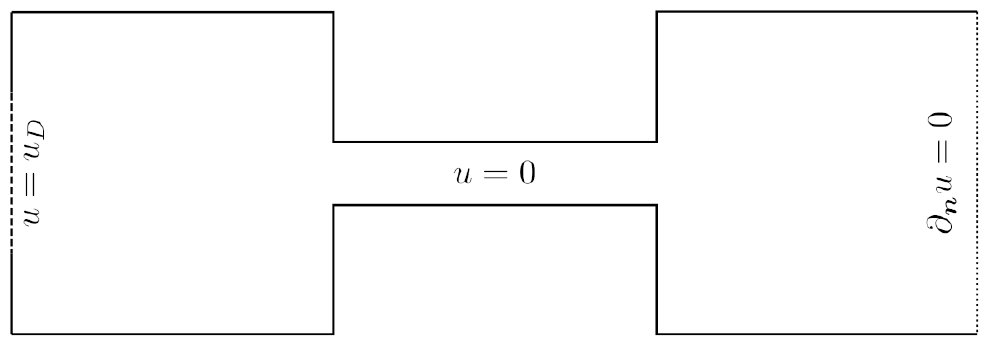}
~~
\includegraphics[width=.44\linewidth]{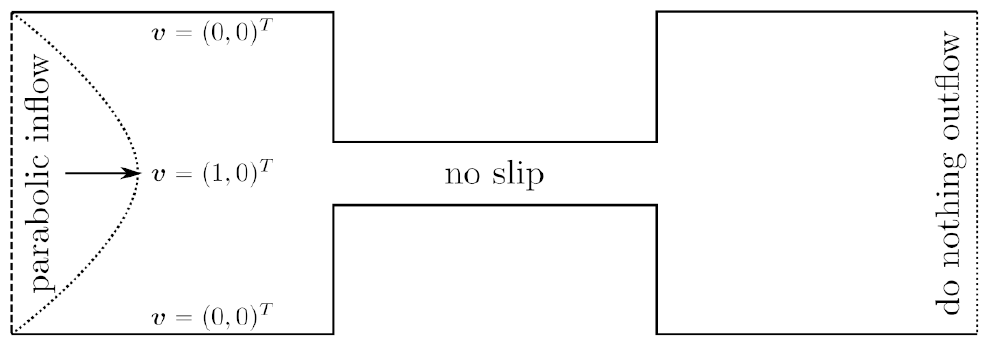}

\caption{Boundary colorization for the convection-diffusion problem (left)
and the coupled Stokes problem (right) for Sec.~\ref{sec:5:2}.}

\label{fig:4:boundary}
\end{figure}

\begin{figure}
\centering

\includegraphics[width=.6\linewidth]{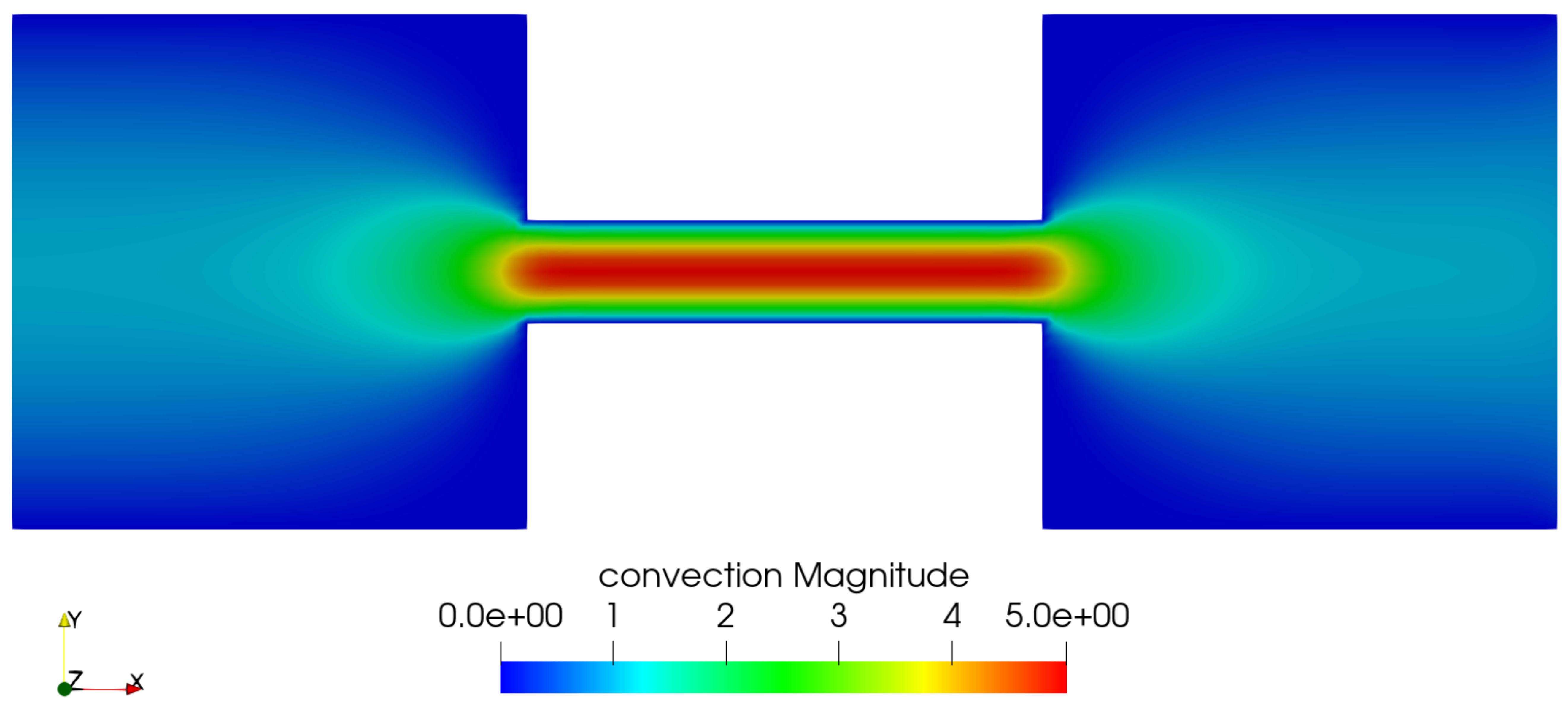}

\caption{Convection $\convection_h$ solution of the Stokes problem on a
sufficiently globally refined mesh with $Q_2$-$Q_1$ finite elements
for Sec.~\ref{sec:5:2}.
On the left boundary a parabolic inflow profile in the positive x-direction
with unit magnitude is prescribed for the convection $\convection$.}

\label{fig:5:convection}
\end{figure}

In this example we simulate a convection-dominated transport with goal-oriented
adaptivity of a species through a channel with a constraint. The domain and its
boundary colorization are presented by Fig. \ref{fig:4:boundary}. Precisely,
the spatial domain is composed of two unit squares and a constraint in the middle
which restricts the channel height by a factor of 5.
Precisely, $\Omega = (-1,0)\times(-0.5,0.5) \cup (0,1)\times(-0.1,0.1) \cup
(1,2)\times(-0.5,0.5)$ with an initial cell diameter of $h=\sqrt{2 \cdot 0.025^2}$.
The time domain is set to $I=(0,2.5)$ with an initial $\tau=0.1$ for
the initialization of the slabs for the first loop $\ell=1$.
We approximate the primal solution $\concentration_{\tau h}^{1,1}$ with the
cG(1)-dG(1) method and the dual solution $\dualz_{\tau h}^{2,2}$ with the
cG(2)-dG(2) method.
The target quantity is
\begin{displaymath}
J(\varphi)= \frac{1}{\| \concentration_{\tau h} \|_{(0,T)\times\Omega}}
\displaystyle\int_I (\varphi, \concentration_{\tau h})\, \mathrm{d}t\,.
\end{displaymath}
The transport of the species, which enters the domain on the left with an
inhomogeneous and time-dependent Dirichlet boundary condition and
leaves the domain on the right through a homogeneous Neumann boundary condition,
is driven by the convection with magnitudes between 0 and 5 as displayed in
Fig.~\ref{fig:5:convection}. The diffusion coefficient has the constant and small
value of $\varepsilon=10^{-4}$, the reaction coefficient $\alpha=0$ is vanishing
and the density has the value $\rho=1$.
The local SUPG stabilization coefficient is here set to
$\delta_K = \delta_0 \cdot h_K$, $\delta_0=0$,
i.e. a vanishing stabilization here.
The initial value function $\concentration_0=0$ as well as the forcing term
$\transportforce=0$ are homogeneous.
The Dirichlet boundary function value is homogeneous on $\Gamma_D$ except for the
line $(-1,-1) \times (-0.25,0.25)$ where the value
\begin{displaymath}
\concentration(y,t)=16 \cdot (0.25-y) \cdot (y+0.25) \cdot
\min\{100t, 0.1\}
\end{displaymath}
is prescribed on the solution.
The viscosity is set to $\tilde{\viscosity}=1$.
The tuning parameters of the goal-oriented adaptive Algorithm given in
Sec.~\ref{sec:4:practical_aspects} are chosen here in a way to balance
automatically the potential misfit of the spatial and temporal errors as
$\theta_h^\textnormal{bottom} = 0$,
\begin{equation}
\label{eq:5:3:balancing}
\theta_h^\textnormal{top} = \frac{1}{2} \cdot
\min\left\{ \left| \frac{\eta_h}{\eta_h + \eta_\tau} \right|\,, 1\right\}\quad
\textnormal{and}\quad
\theta_\tau^\textnormal{top} = \frac{1}{2} \cdot
\min\left\{ \left| \frac{\eta_\tau}{\eta_h + \eta_\tau} \right|\,, 1\right\}\,.
\end{equation}

\begin{figure}
\centering

\includegraphics[width=.44\linewidth]{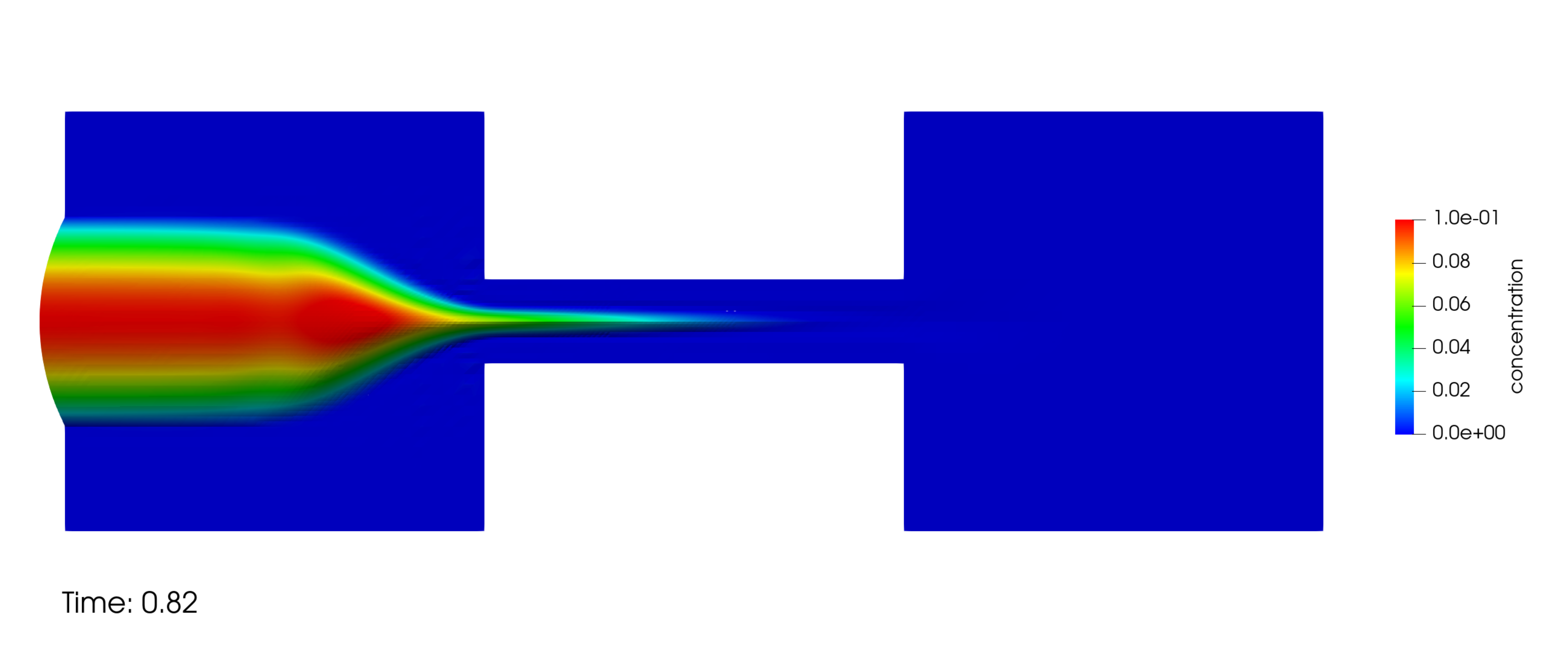}
~~
\includegraphics[width=.44\linewidth]{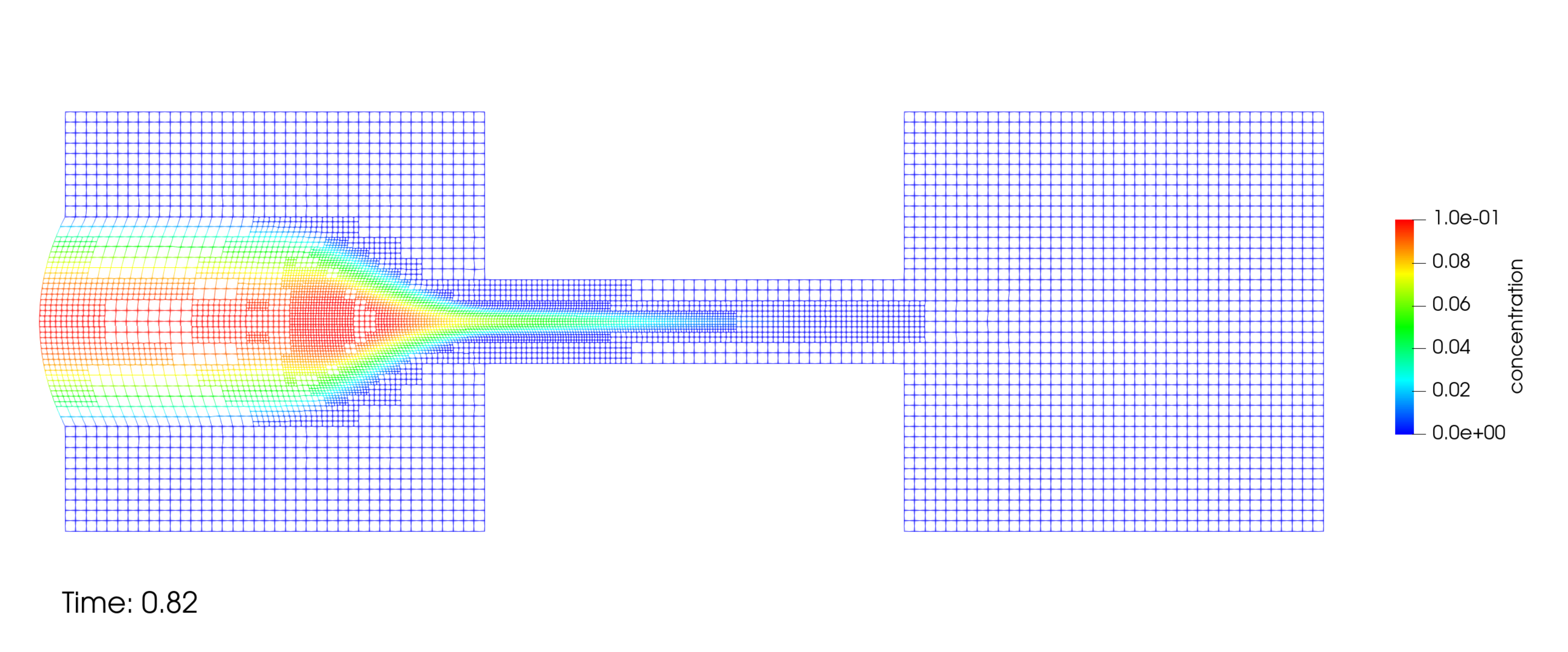}

\includegraphics[width=.44\linewidth]{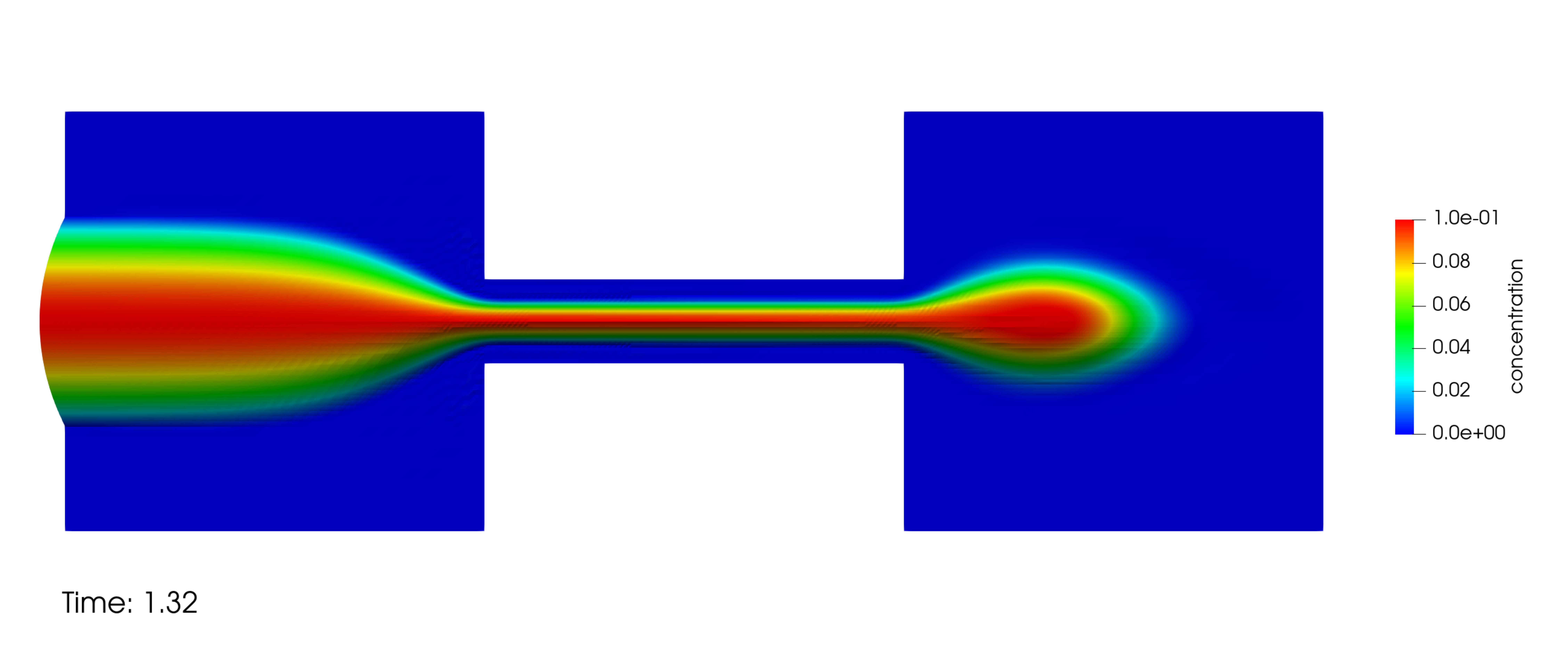}
~~
\includegraphics[width=.44\linewidth]{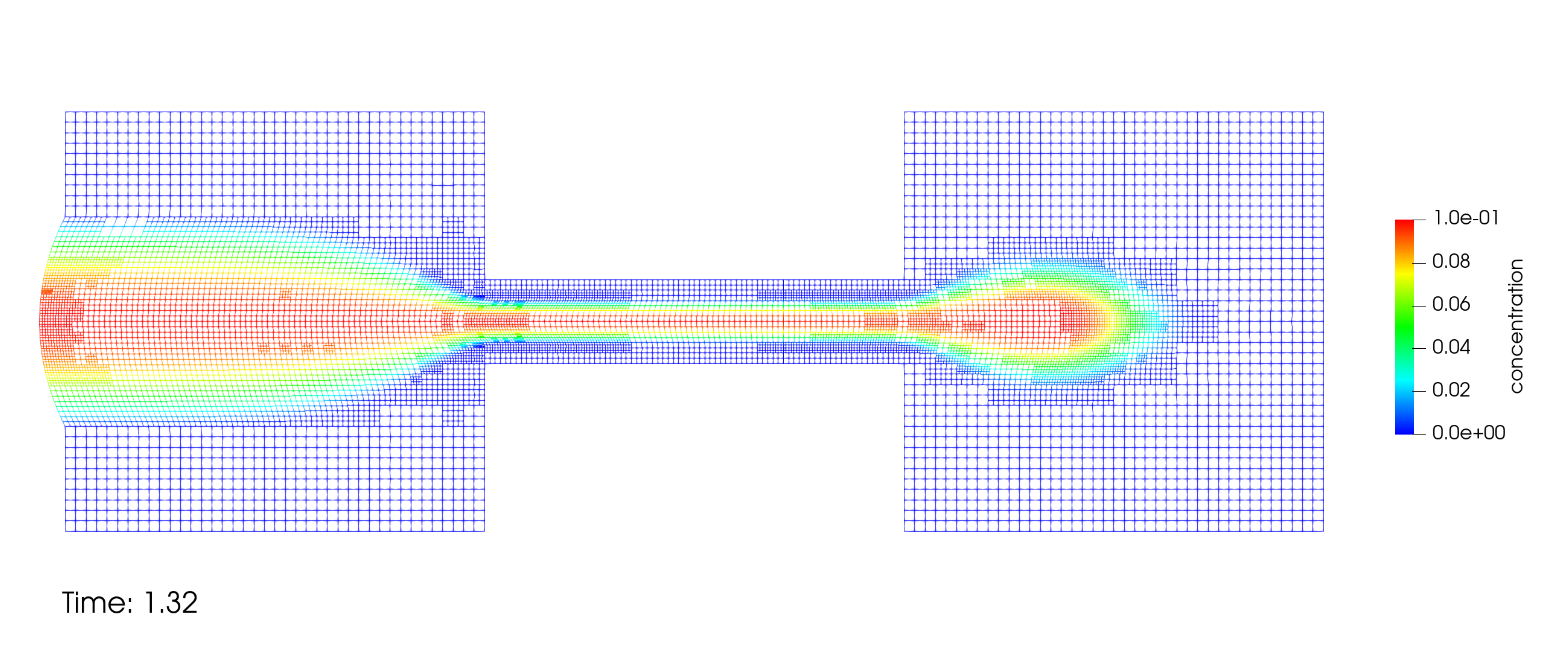}

\includegraphics[width=.44\linewidth]{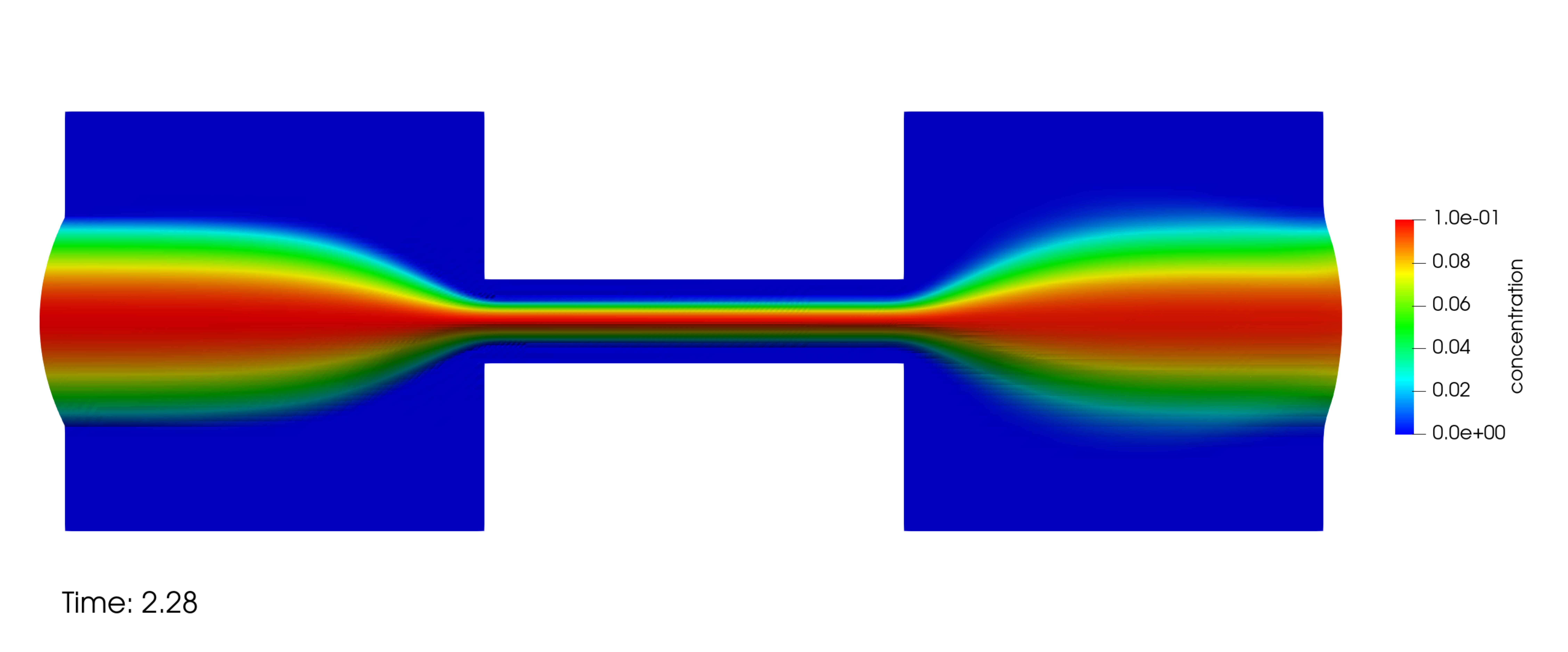}
~~
\includegraphics[width=.44\linewidth]{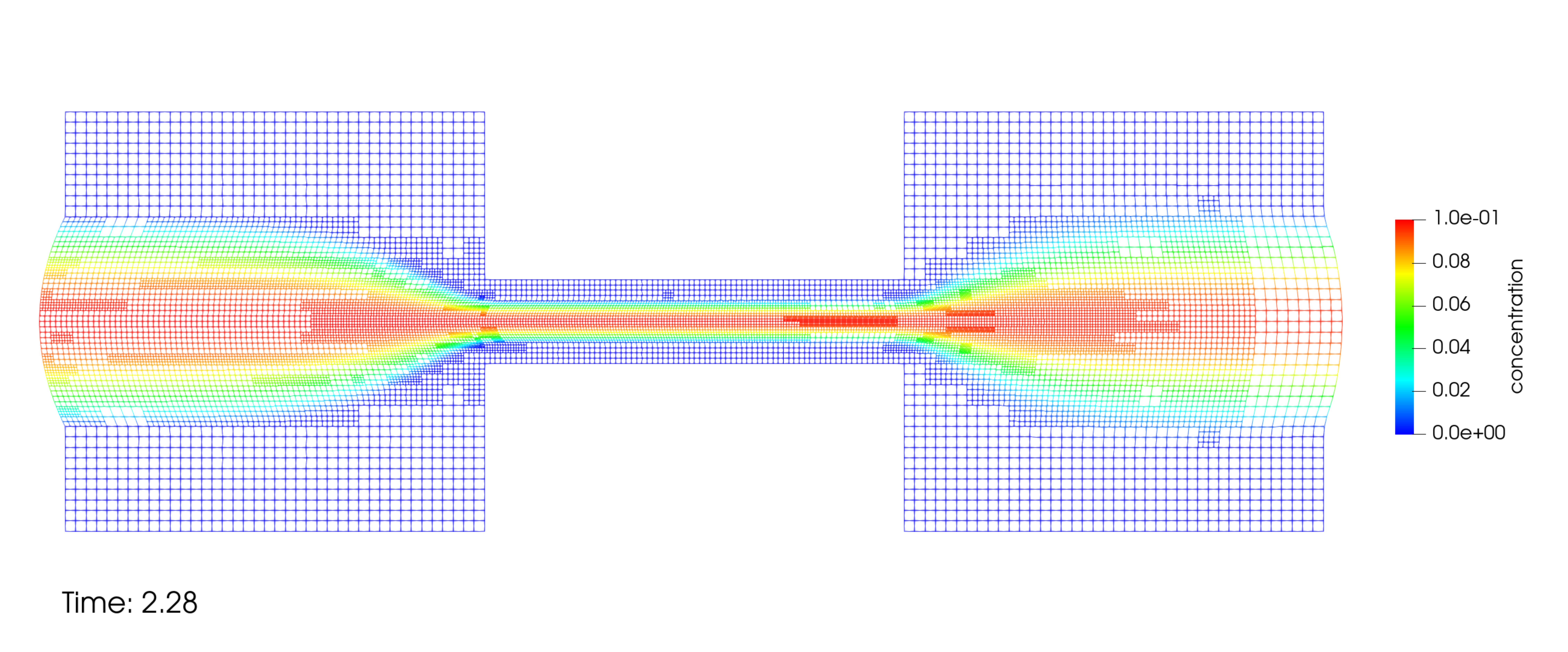}

\caption{Solution profiles and corresponding meshes of loop $\ell=5$
for Sec.~\ref{sec:5:2}.}
\label{fig:6:ex2:loop5}
\end{figure}

\begin{figure}
\centering

\includegraphics[width=.44\linewidth]{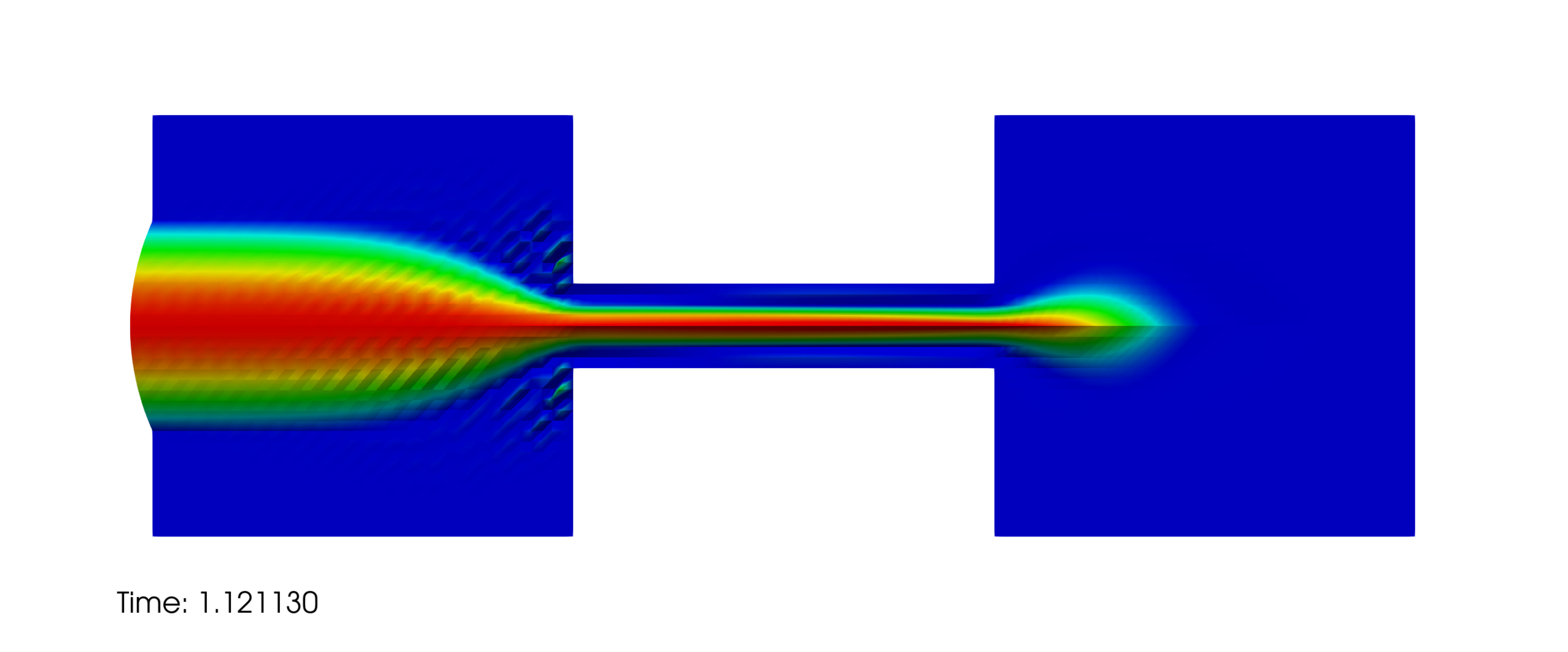}
~~
\includegraphics[width=.44\linewidth]{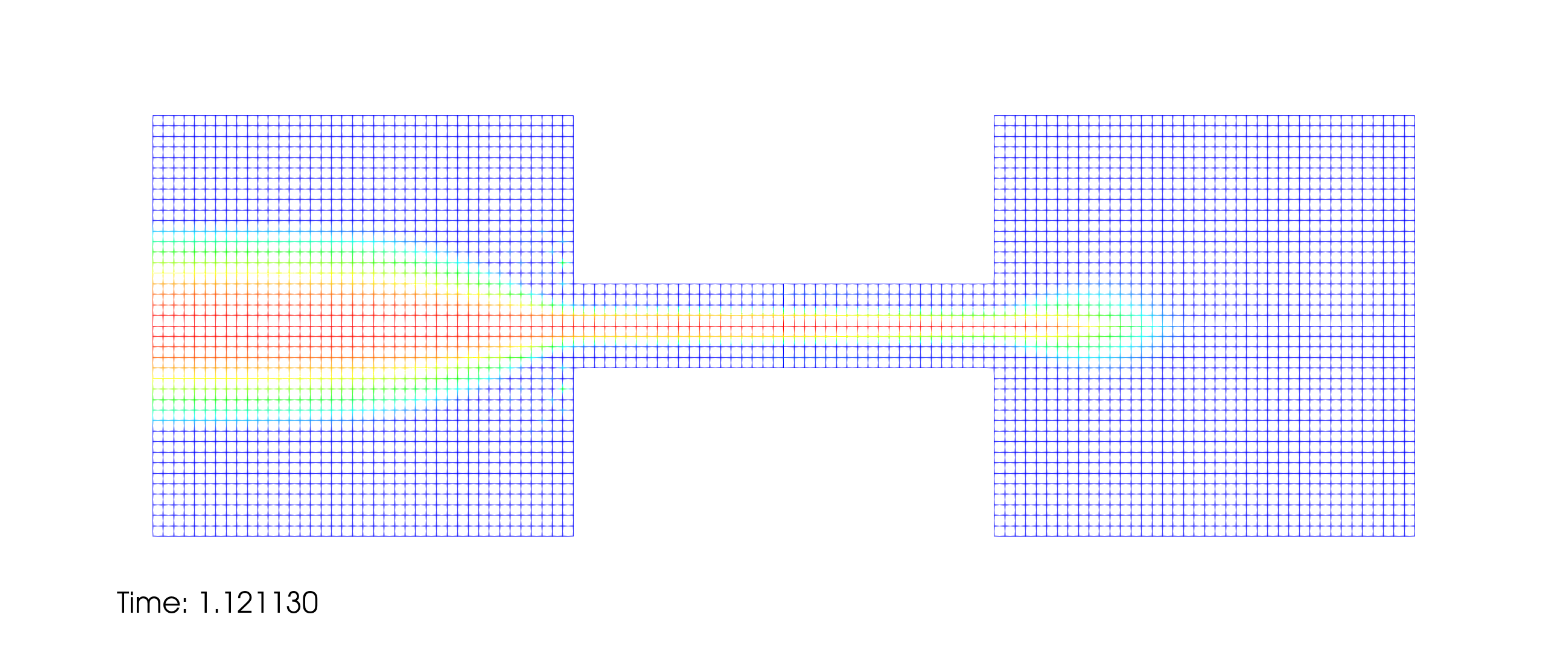}

\includegraphics[width=.44\linewidth]{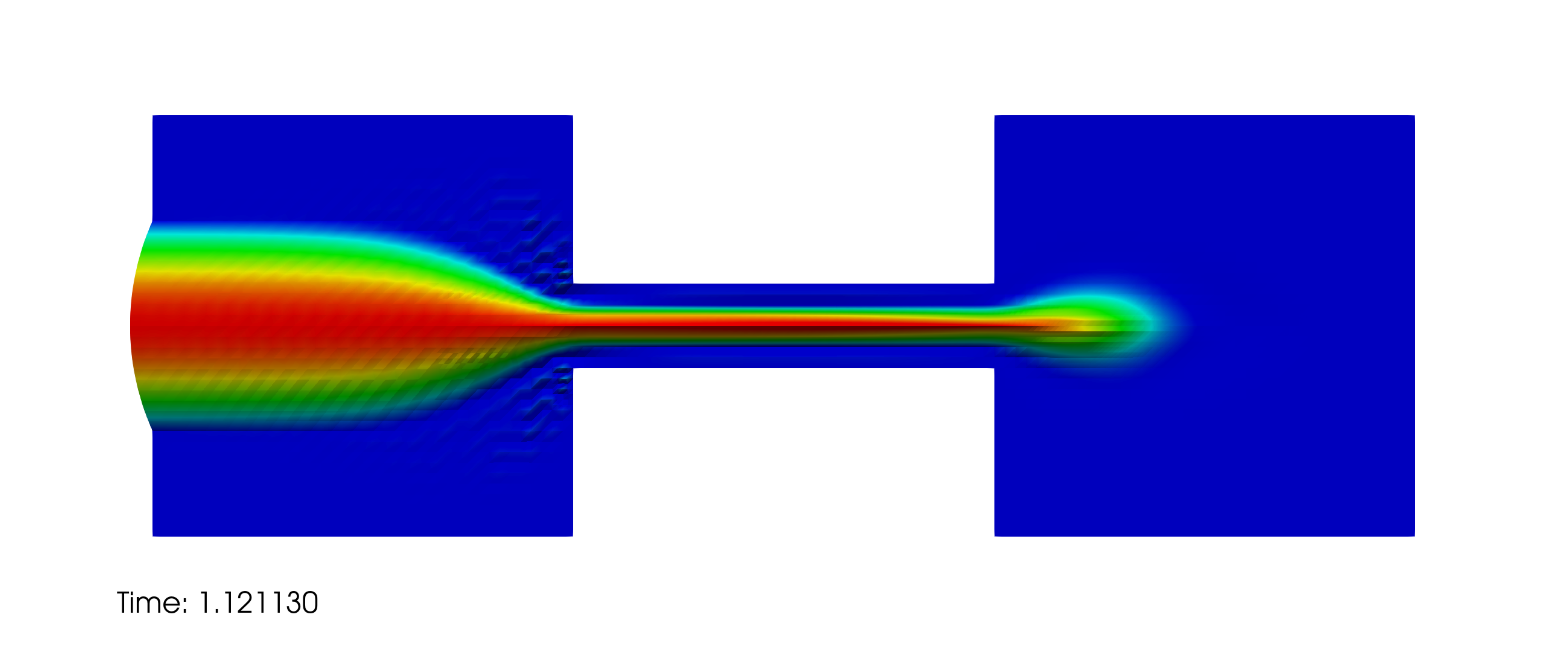}
~~
\includegraphics[width=.44\linewidth]{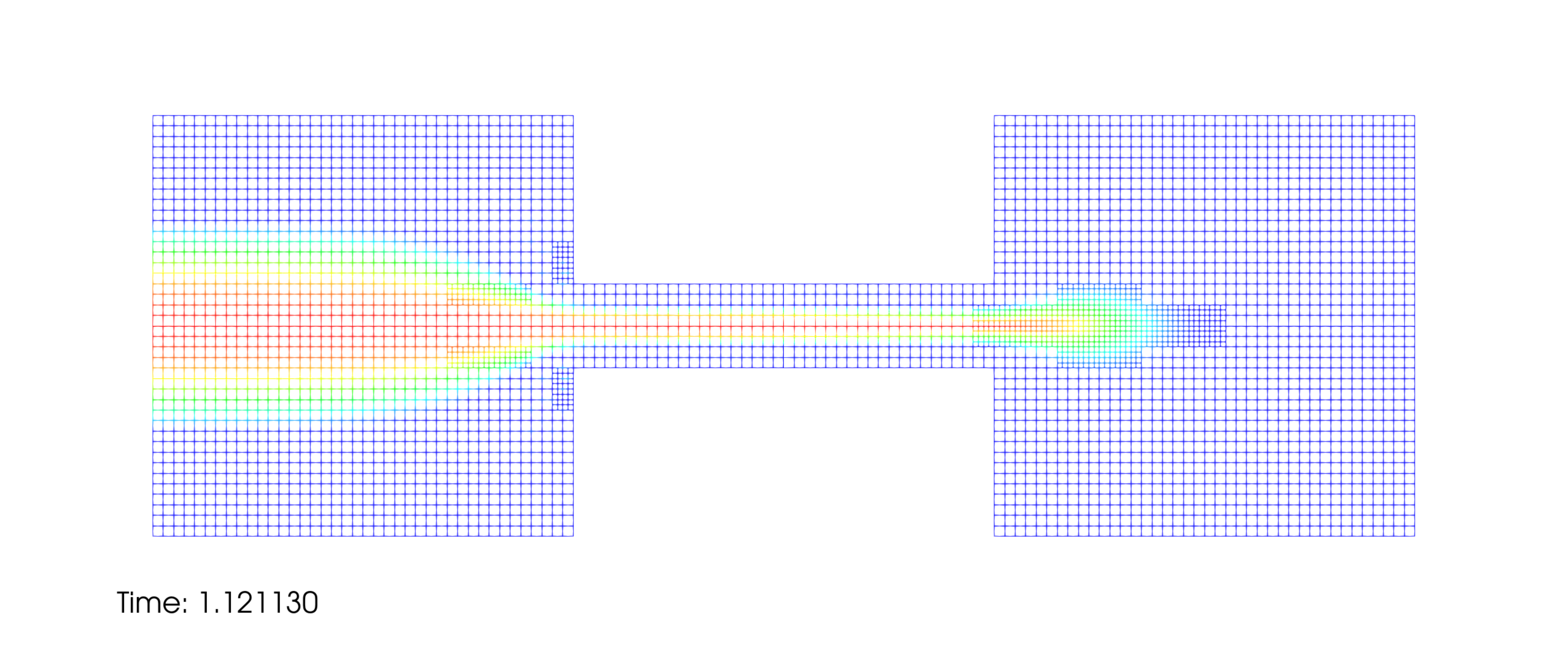}

\includegraphics[width=.44\linewidth]{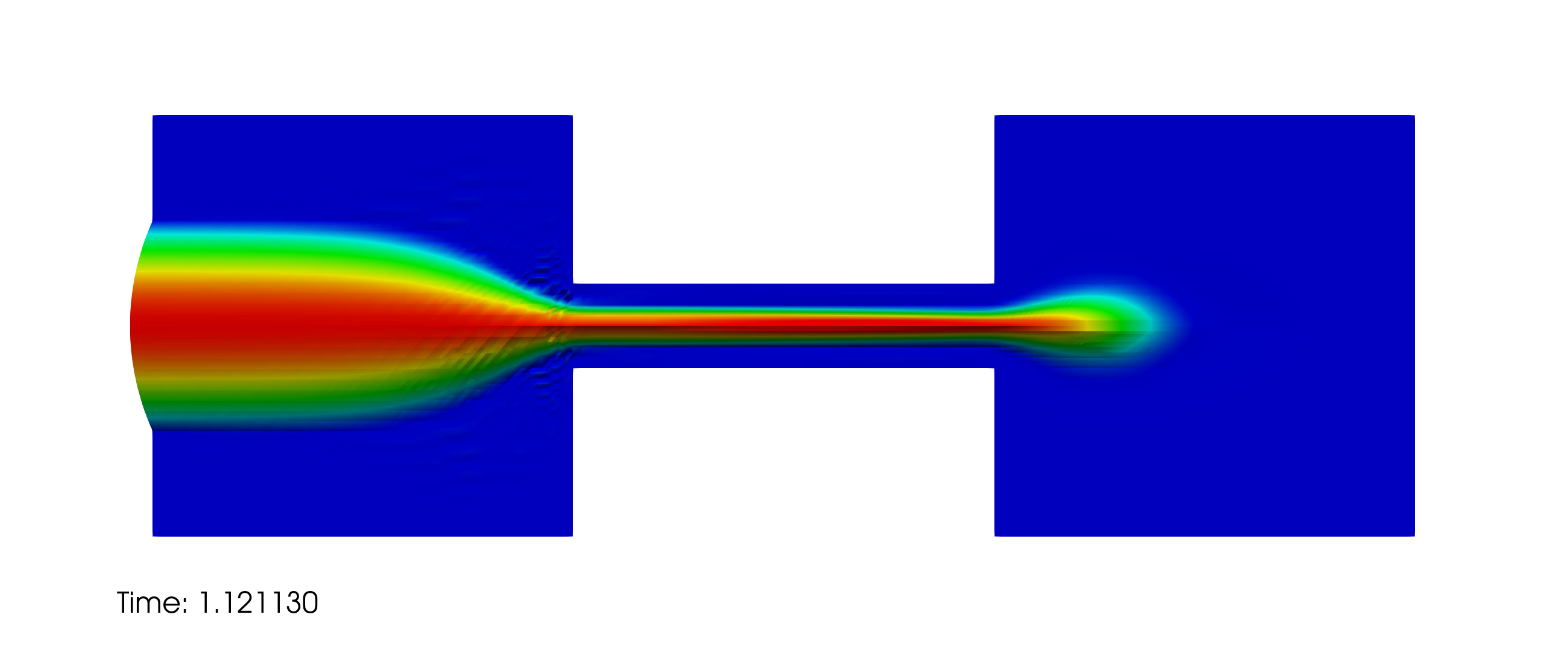}
~~
\includegraphics[width=.44\linewidth]{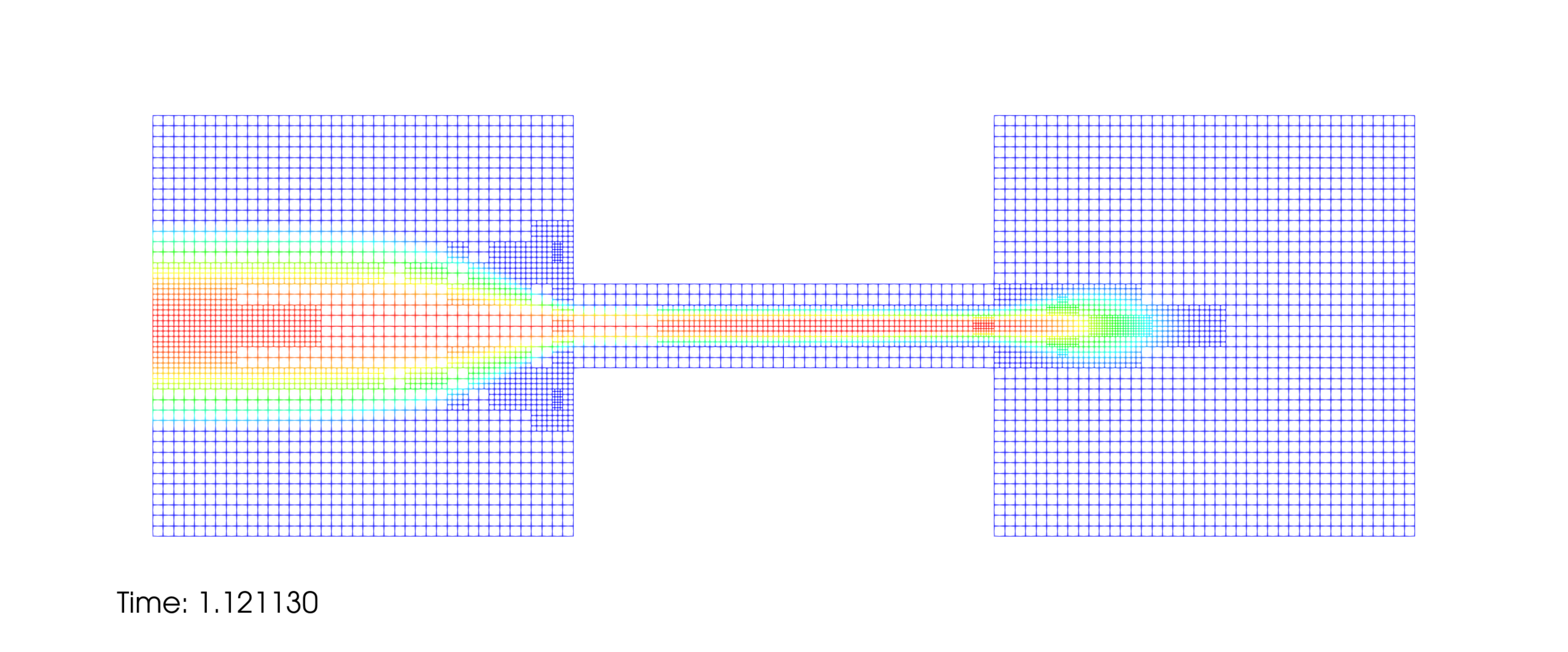}

\includegraphics[width=.44\linewidth]{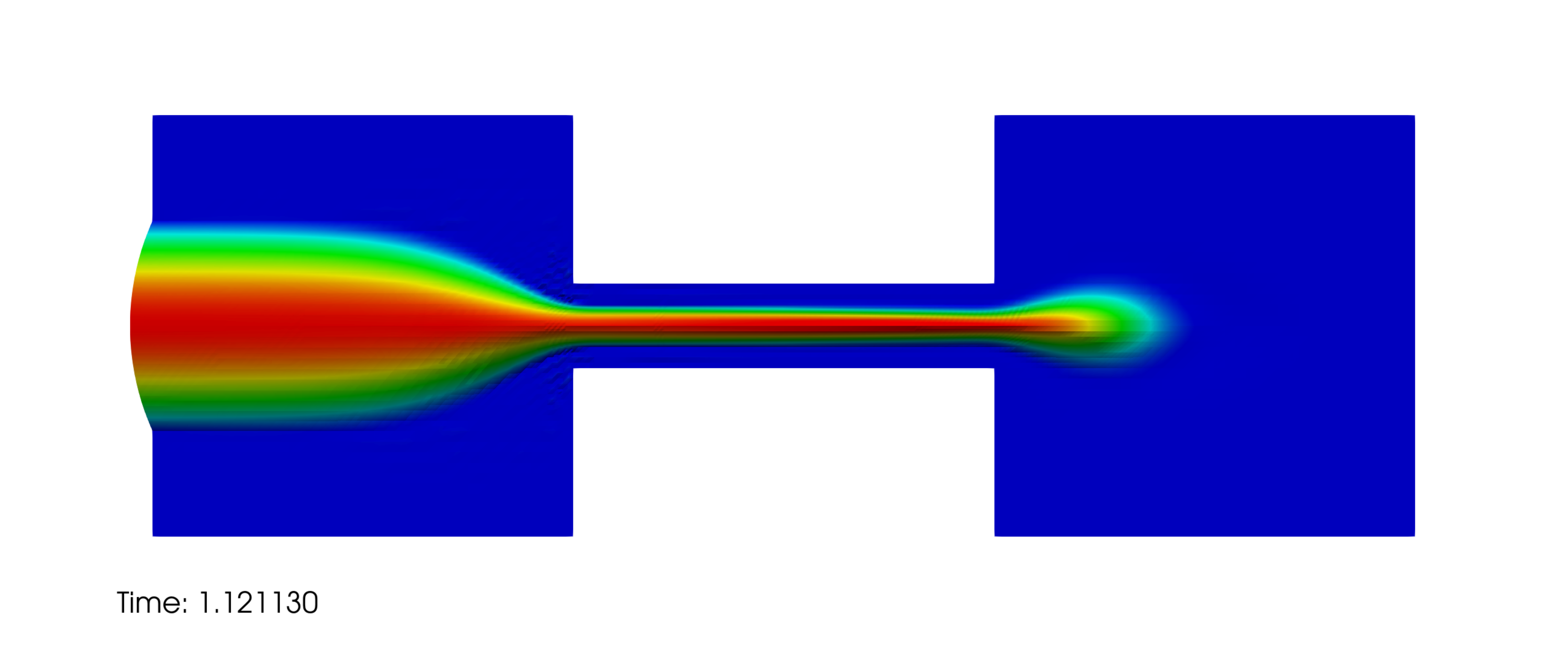}
~~
\includegraphics[width=.44\linewidth]{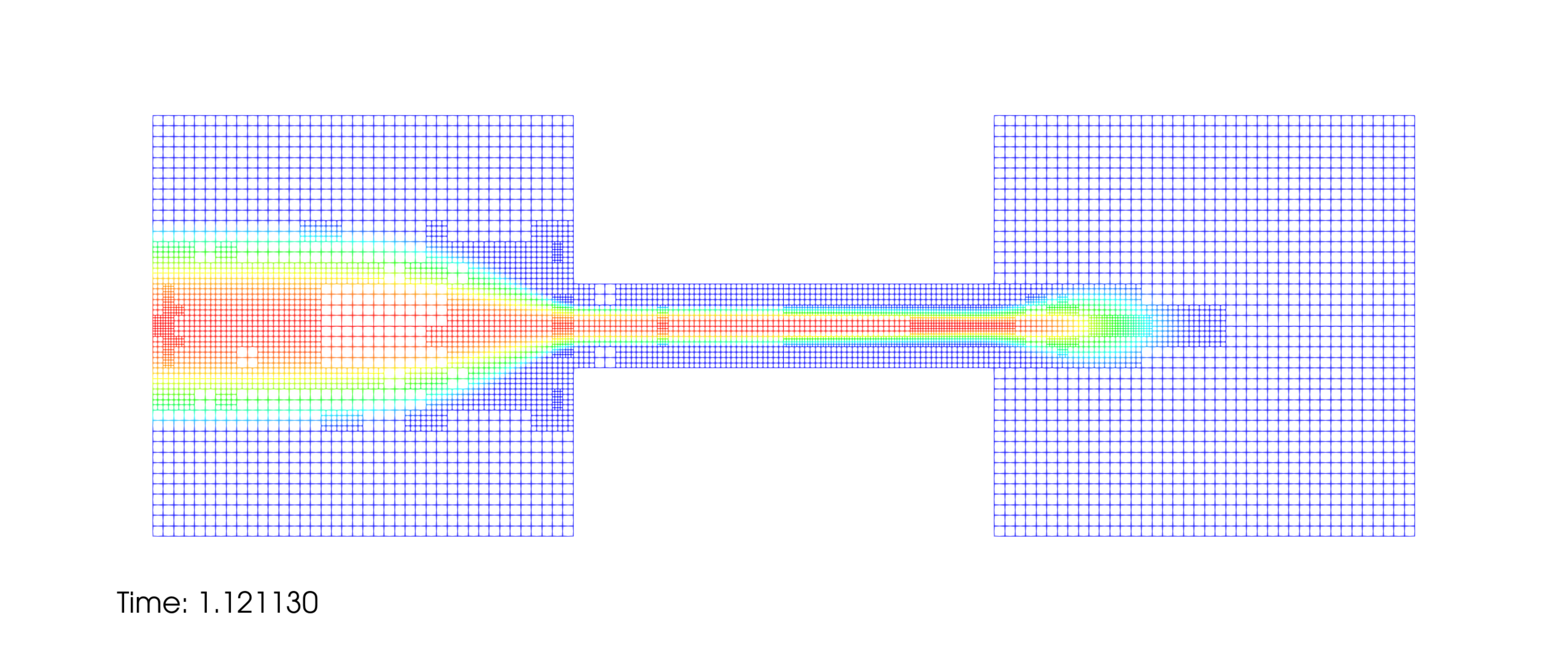}

\includegraphics[width=.44\linewidth]{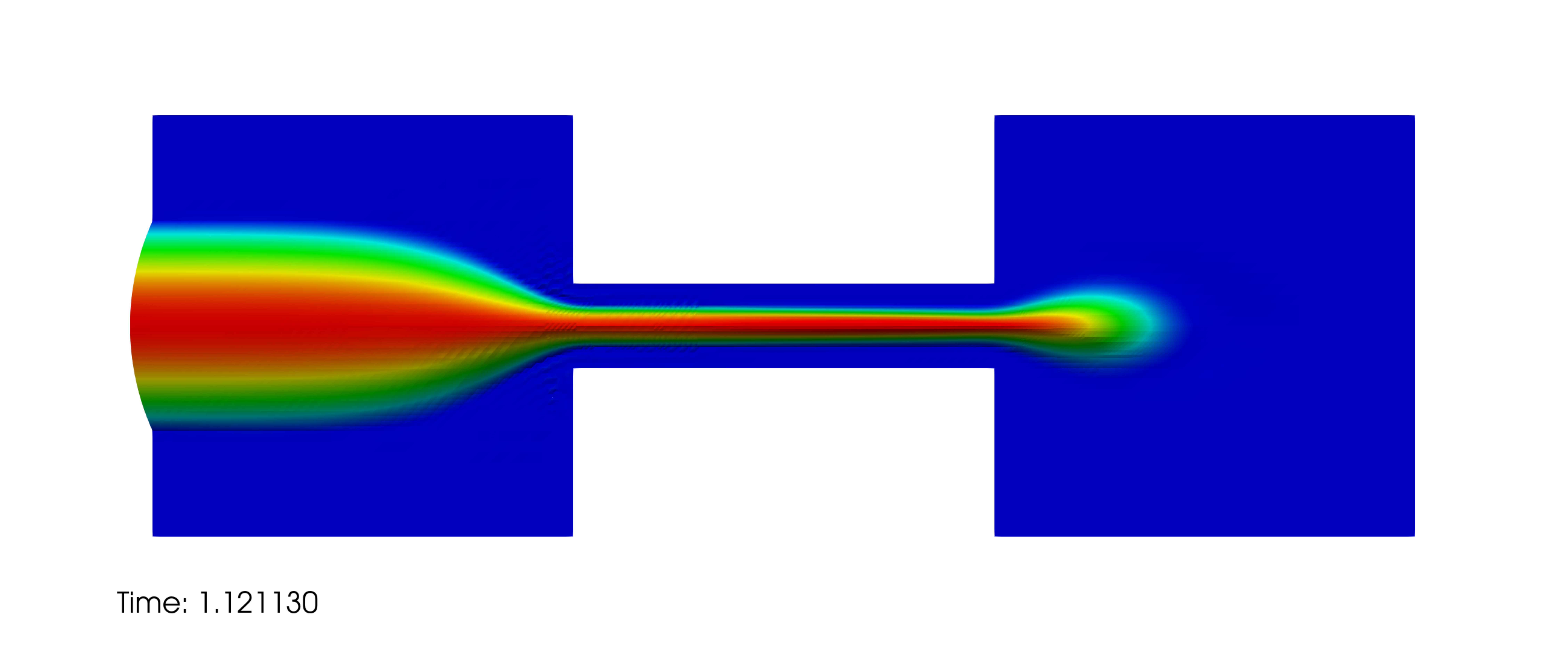}
~~
\includegraphics[width=.44\linewidth]{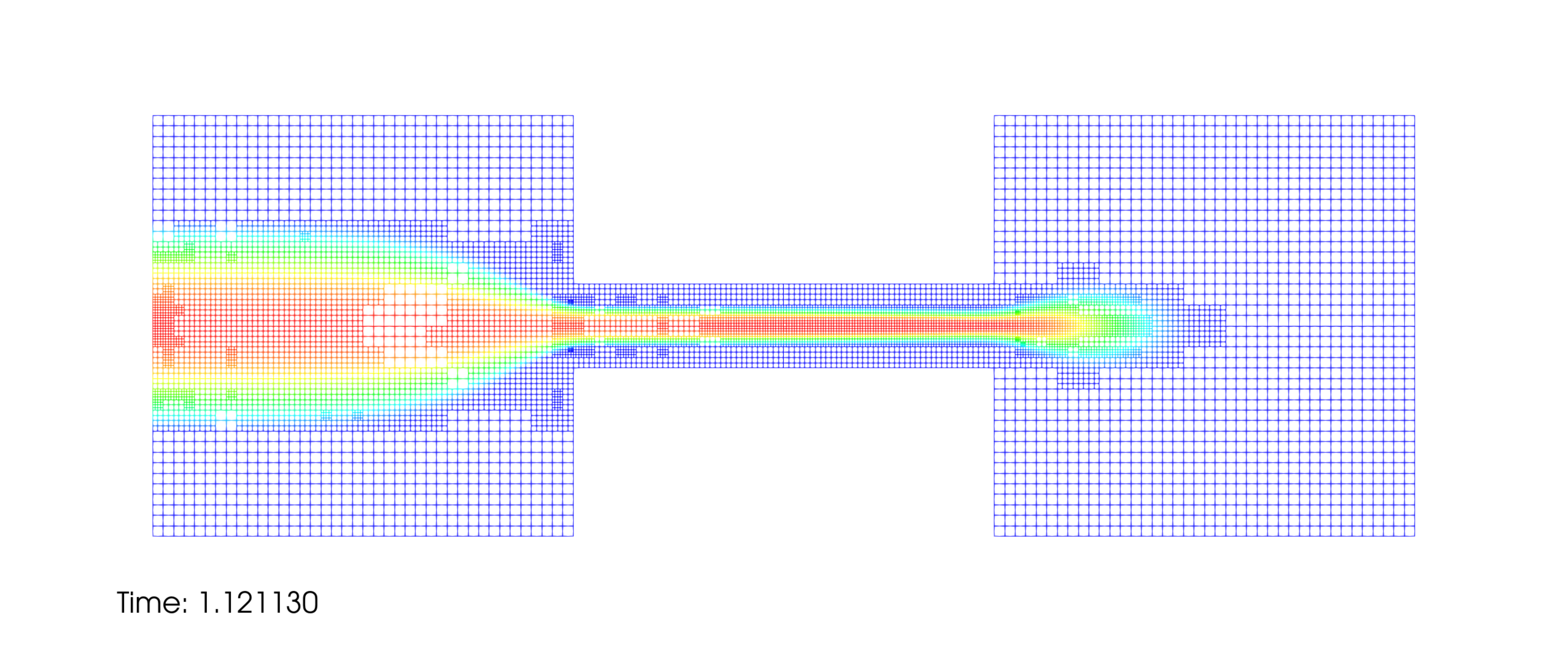}

\includegraphics[width=.44\linewidth]{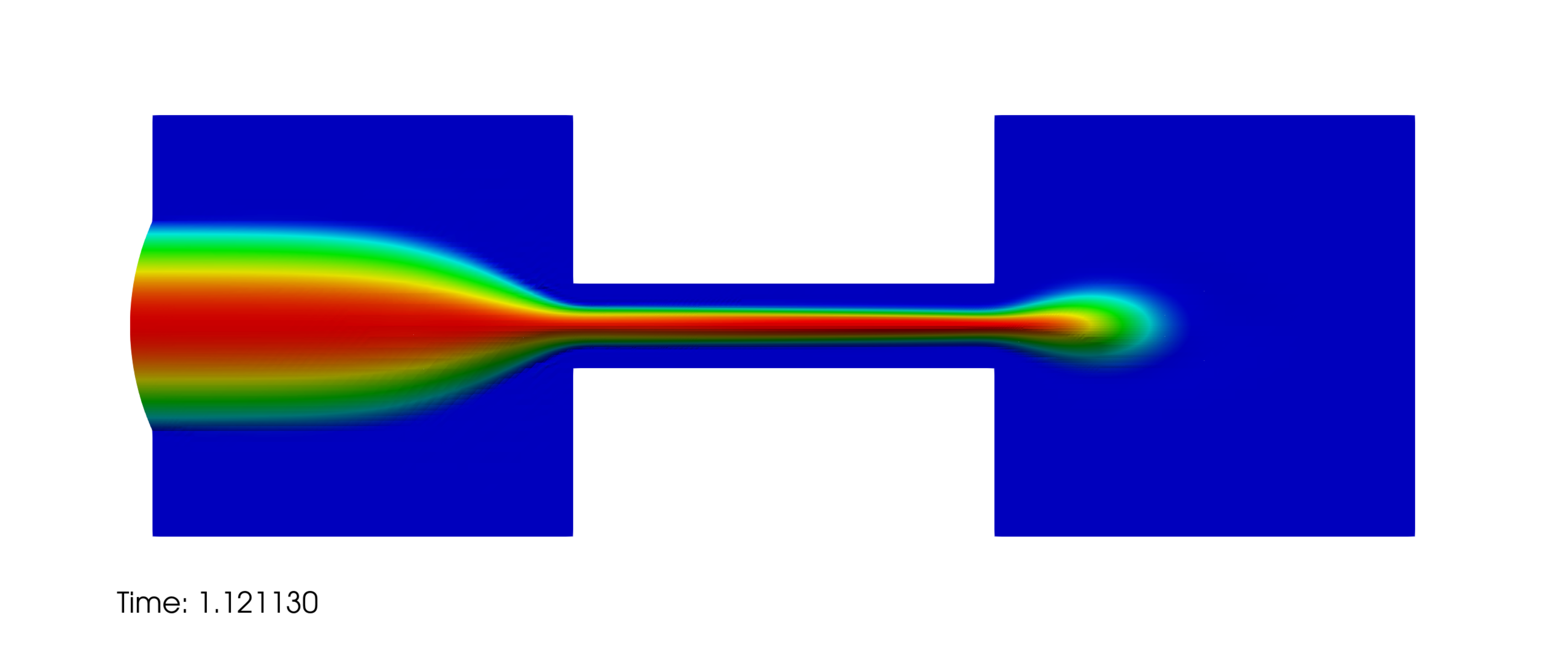}
~~
\includegraphics[width=.44\linewidth]{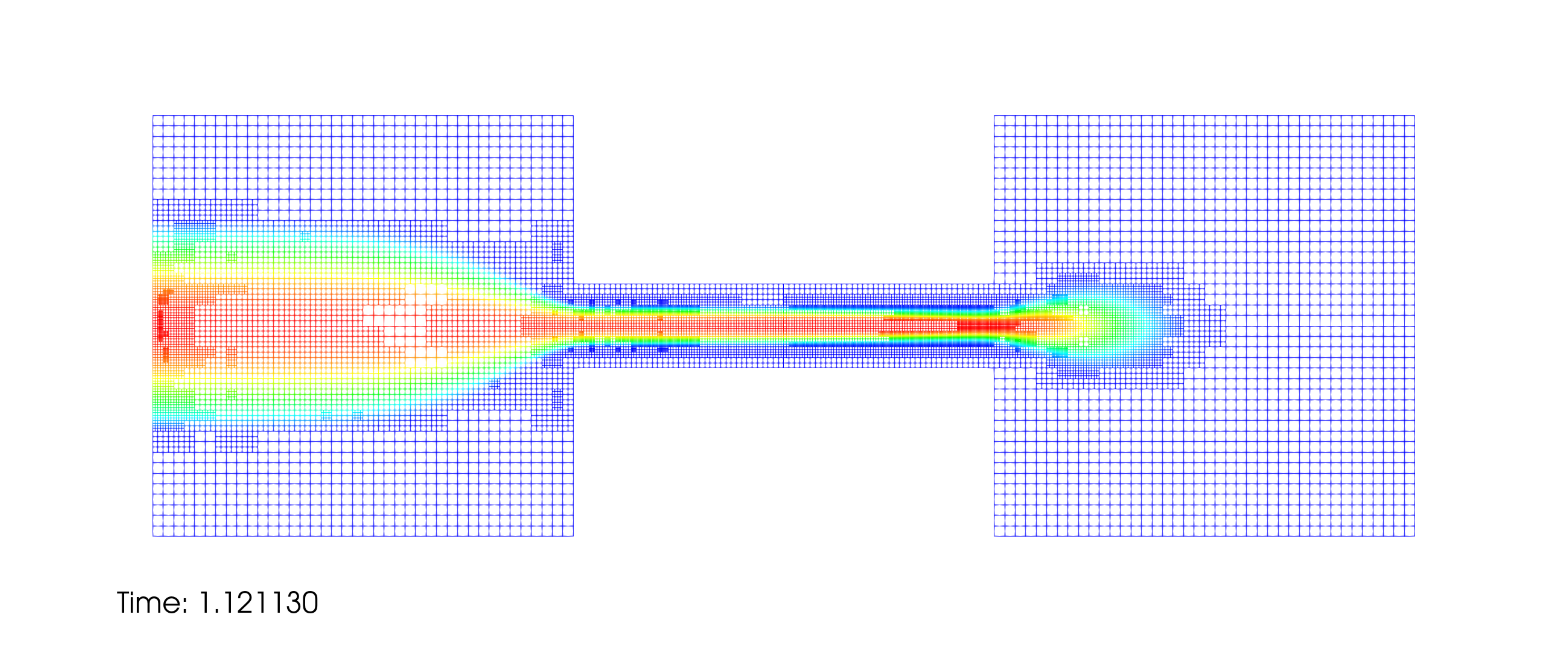}

\includegraphics[width=.44\linewidth]{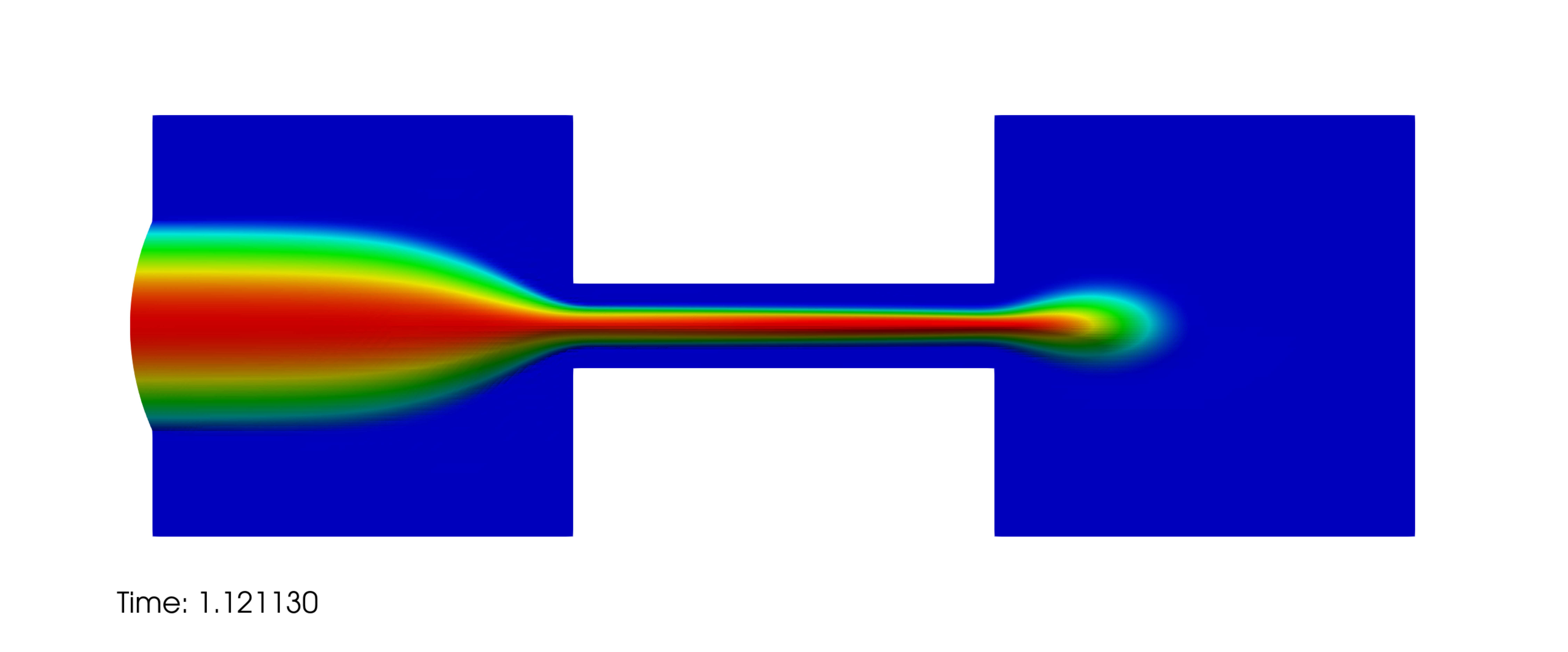}
~~
\includegraphics[width=.44\linewidth]{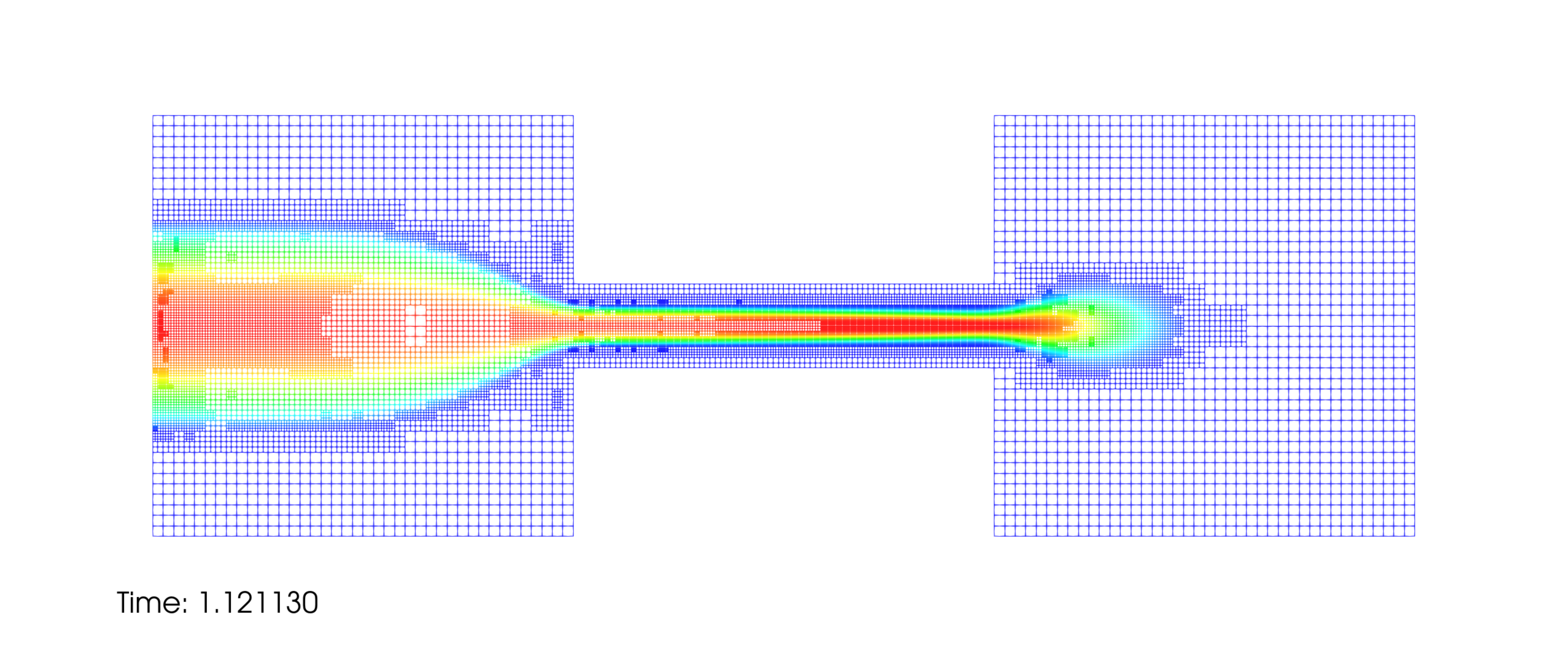}

\caption{Capturing of spurious oscillations with goal-oriented adaptivity
illustrated by comparative solution profiles and corresponding meshes of
the loops $\ell=1-7$ for Sec.~\ref{sec:5:2}.}
\label{fig:7:ex2:osc}
\end{figure}

\begin{table}[ht]
\centering

\begin{tabular}{c | rr|r | c}
\hline
\hline
$\ell$ & $N$ & $N_K^{\text{tot}}$ & $N_K^{\text{max}}$ &
$||u_{\tau h}^{1,1}||$\\
\hline
1  & 25 & 88000   & 3520 & 0.0845694\\
2  & 26 & 108380  & 5008 & 0.0843620\\
3  & 27 & 153324  & 8464 & 0.0844429\\
4  & 28 & 203524  & 10792 & 0.0845339\\
5  & 29 & 298952  & 17680 & 0.0846325\\
6  & 30 & 459696  & 25996 & 0.0847412\\
7  & 31 & 649852  & 38212 & 0.0848211\\
8  & 32 & 1058456 & 68344 & 0.0848755\\
9  & 41 & 1881548 & 100744 & 0.0849878\\
\hline
\end{tabular}

\caption{Goal-oriented temporal and spatial refinements for Sec.~\ref{sec:5:2}.
$\ell$ denotes the refinement level loop,
$N$ the accumulated total cells in time,
$N_K^{\text{tot}}$ the accumulated total cells in space,
$N_K^{\text{max}}$ the maximal number of cells on a slab and
$||u_{\tau h}^{1,1}||$ the value of the goal-functional.
}
\label{tab:9:ex2}
\end{table}

The solution profiles and corresponding adaptive meshes of the primal solution
$\concentration_{\tau h}^{1,1}$ of the loop $\ell=5$
for $t=0.82$, $t=1.32$ and $t=2.28$ are given by Fig.~\ref{fig:6:ex2:loop5}.
In Fig.~\ref{fig:7:ex2:osc} we present a comparative study of the solution
profile and corresponding meshes for $t=1.21$ over the adaptivity loops.
For $\ell=1,2,3$ obvious spurious oscillations in the left square are
existing, which are captured and resolved by the goal-oriented adaptivity
by taking spatial mesh refinements next to the right boundary of the left square.
For $\ell>3$ the spatial refinements capture especially the solution profile
fronts with strong gradients with a focus on the high-convective middle of
the spatial domain.
The refinement in space and time is automatically balanced due to the dynamic
choice of $\theta_h^\textnormal{top}$ and $\theta_\tau^\textnormal{top}$
given by \eqref{eq:5:3:balancing} and is illustrated by Tab.~\ref{tab:9:ex2}.
Precisely, the space-time mesh updates for the loops 2-8 are dominated by
spatial refinements while a significant growth of the time elements can be
recognized for the 9th loop.

\section{Summary}
\label{sec:6:summary}

In this work we presented a space-time adaptive solution algorithm
for SUPG stabilized finite element approximations of a
convection-dominated transport problem that is coupled with a flow problem.
The convection-dominance puts further facets of complexity and sensitivity on
the a posteriori error control, but also illustrates the efficiency and potential
of automatic mesh adaptation. The underlying approach is based on the Dual
Weighted Residual method for goal-oriented error control. A splitting of
the discretization errors in space and time is used for the transport
problem which is then used for the respective mesh adaptation process in
the form of underlying error indicators $\eta_\tau$ and $\eta_h$, respectively.
A discontinuous Galerkin method dG($r$) with an arbitrary polynomial degree
$r \geq 0$ is applied for the discretization in time of the transport problem.
The weights of the DWR adaptivity process are approximated by higher-order 
finite elements instead of using patch-wise higher-order extrapolation.
In numerical experiments we could prove that spurious oscillations that
typically arise in numerical approximations of convection-dominated problems
could be reduced significantly. Effectivity indices close to one were
obtained for small diffusion coefficients corresponding to high P\'eclet numbers.
Moreover, the potential of the approach was illustrated for a problem of practical
interest. Along with the underlying software platform, even more sophisticated
techniques and settings including multirate approximations, varying meshes for
flow and transport or coupling with time-dependent flow problems become feasible.

\section*{Acknowledgements}
U. K\"ocher was partially supported by the Oden Institute for Computational
Engineering and Sciences, University of Texas at Austin, Texas, USA
as long-term guest visitor of M.F.~Wheeler for the implementation of the used
space-time-slab finite element handler.




%




\end{document}